\RequirePackage{fix-cm}
\documentclass[smallextended]{svjour3}
\journalname{Mathematical Programming Computation}

\usepackage{graphicx}

\makeatletter\let\svjour@makecaption\@makecaption\makeatother
\usepackage{subcaption}
\makeatletter\let\@makecaption\svjour@makecaption\makeatother

\usepackage{amssymb}
\usepackage{amsmath}

\usepackage{amsthm}
\usepackage{mathtools}
\usepackage{url}
\usepackage[ruled,vlined,linesnumbered]{algorithm2e}
\usepackage{hyperref}

\newcommand{\KKT}{\mathrm{KKT}}

\DeclareMathOperator*{\diag}{diag}
\DeclareMathOperator*{\LSE}{LSE}
\DeclareMathOperator*{\R}{\mathbb{R}}

\newtheorem{prop}{Proposition}
\newtheorem{defn}{Definition}
\newtheorem{rema}{Remark}

\title{GPU-Accelerated Bregman Douglas--Rachford Splitting for Discrete Optimal Transport}
\titlerunning{GPU Bregman DR Splitting for Discrete OT}
\author{Yifan Xu \and Shiqian Ma}
\authorrunning{Y. Xu, S. Ma}
\institute{Y. Xu \at
              Department of Computational Applied Mathematics and Operations Research,
              Rice University, Houston, TX, USA \\
              \email{yx76@rice.edu}
           \and
           S. Ma (corresponding author) \at Data Science and AI Institute, and
              Department of Applied Mathematics and Statistics,
              Johns Hopkins University, Baltimore, MD, USA \\
              \email{sqma@jhu.edu}}

\date{Received: date / Accepted: date}

\begin{document}

\maketitle

\begin{abstract}
We present GPU-accelerated implementations of the Bregman Douglas--Rachford splitting method (BDRS) for discrete optimal transport problems in three input formats: an explicit cost matrix, two point clouds with a ground cost function between them, and a separable cost on a regular grid. We derive a log-domain factored form of the iteration that replaces the $m \times n$ transport plan by $O(m+n)$ numbers. The three input formats differ in how the cost matrix reaches the arithmetic. The General solver reads a stored cost matrix, the Point Cloud solver recomputes it from the coordinates, and the Grid solver evaluates the iteration and its checks by one-dimensional reductions along the grid axes. At each termination check, a c-transform gives potentials that are dual feasible in exact arithmetic, without changing the BDRS trajectory, and the solvers stop once the row residual and the duality gap with these potentials are below the tolerance.
We benchmark them against nine GPU solvers on NVIDIA A100 GPUs, recover a primal--dual pair for the original problem from every solver's output, and score all pairs by the same Karush--Kuhn--Tucker residual. We demonstrate that our implementations of BDRS provide a favorable accuracy--time trade-off against the GPU baselines in each of the three input formats.

\keywords{Optimal Transport \and Bregman Douglas--Rachford Splitting \and GPU Computing \and First-Order Methods \and Linear Programming}
\subclass{49Q22 \and 90C05 \and 65K05 \and 65Y10}
\end{abstract}

\section{Introduction}\label{sec:intro}

In this paper, we consider GPU implementations of the Bregman Douglas--Rachford Splitting method (BDRS) \cite{ma2025bregman,scaling-convergence} for solving the discrete optimal transport (DOT) problem:

\begin{equation}
\label{eq:primal-problem}
\tag{\ensuremath{\mathcal{P}}}
\begin{aligned}
\min_{X\in\R^{m\times n}}  & \langle C, X\rangle \\
\text{s.t.}\quad & X\mathbf{1}_n = r, \\
& X^\top \mathbf{1}_m = c, \\
& X \ge 0,
\end{aligned}
\end{equation}
where $C \in \R^{m \times n}_{+}$ is the cost matrix, $r \in \Delta^{m-1}$ and $c \in \Delta^{n-1}$ are the row and column marginals, and $X$ is the transport plan. Here $\Delta^{s-1} = \{z\in\mathbb{R}^s \mid \sum_{i=1}^s z_i =1, z\geq 0\}$ and $\mathbf{1}_n$ denotes the $n$-dimensional all-one vector.

The dual problem of \eqref{eq:primal-problem} is given by:

\begin{equation}
    \label{eq:dual-problem}
    \tag{\ensuremath{\mathcal{D}}}
    \begin{aligned}
        \max_{\alpha \in \R^m, \beta \in \R^n} &\langle \alpha, r \rangle + \langle \beta, c \rangle \\
        \text{s.t.}\quad & C - \alpha\mathbf{1}_n^\top - \mathbf{1}_m\beta^\top\geq 0.
    \end{aligned}
\end{equation}
We refer to the dual variables $\alpha$ and $\beta$ as the row and column potentials, respectively.

The DOT problem~\eqref{eq:primal-problem} is a special linear programming problem. There exist various classical algorithms to solve the DOT problem, including network simplex~\cite{orlin1997network}, and specialized transportation and assignment routines such as the auction~\cite{bertsekas1988auction} and Hungarian~\cite{kuhn1955hungarian,munkres1957algorithms} algorithms. We refer to \cite{peyre2019computational} for a thorough survey of computational OT. Recently, the IIO algorithm~\cite{bargetto2026iio}, a variant of the network simplex algorithm, was specialized to image grids~\cite{bargetto2026improved} and provides exact solutions to grid DOT problems of up to $512^4$ decision variables.

The cost of the aforementioned classical algorithms often scales steeply with problem size. Therefore, there has been a rising demand for scalable approximations of the solution of DOT problems in data science. As a result, a large body of recent work solves the entropy-regularized OT problem (EOT). This includes the Sinkhorn algorithm~\cite{cuturi2013sinkhorn}
and its close variants such as \cite{altschuler2017near,schmitzer2019stabilized,alaya2019screening}. There also exist software libraries for solving EOT such as POT~\cite{flamary2021pot} and OTT-JAX~\cite{cuturi2022ott}. One issue with EOT is that it requires the regularization parameter to be very small in order to well approximate the original DOT problem, which may cause numerical instability in practice.

Over the last few years, GPU implementations of first-order methods have become very competitive solvers for large-scale linear programming and DOT. Specifically, PDLP~\cite{applegate2021pdlp} and its GPU descendants, culminating in cuPDLPx~\cite{lu2025cupdlpx}, solve general LPs using only matrix--vector products, which map naturally onto GPU hardware. The same machinery has been specialized to DOT as a structured LP: PDOT~\cite{lu2024pdot} builds a GPU-accelerated solver that adapts restarted PDHG to the DOT problem by applying its marginalization operator matrix-free, and DROT~\cite{mai2021drot} is a GPU-accelerated Douglas--Rachford splitting on the transport polytope. These solvers place no assumption on the ground cost other than its nonnegativity, at the price of forming and traversing the full $m \times n$ cost and plan, whose size becomes a key memory bottleneck as the size of the problem scales up. LAMP~\cite{burns2025lamp} instead keeps only $O(m+n)$ dual variables, through a log-averaged form of primal--dual mirror prox, and either reads the cost from an explicit matrix or, in its linear-space mode, recomputes it from point coordinates.

Existing GPU-accelerated DOT solvers can roughly be classified into three categories based on the structure of the input format of the cost, as suggested in the OTT-JAX toolbox~\cite{cuturi2022ott}:

\begin{itemize}
    \item \emph{General}: the cost is provided as an explicit matrix;
    \item \emph{Point Cloud}: point coordinates are provided with a ground cost between them;
    \item \emph{Grid}: the marginals are provided on a shared grid, with per-axis costs.
\end{itemize}

The classical and GPU first-order methods mentioned above operate on the \emph{General} format: the cost is held explicitly in memory as an $m \times n$ matrix, except for LAMP's linear-space mode, which operates on the \emph{Point Cloud} format under our classification.

Driven by specific applications, a parallel line of work assumes further structure in the problem. When the problem arrives as coordinates of points in Euclidean space together with a ground distance between them, as in 3D point-cloud and shape comparison~\cite{wu2015shapenets,xia2026halo}, color transfer~\cite{solomon2015convolutional}, and domain adaptation~\cite{courty2017optimal}, the problem falls in the \emph{Point Cloud} format. Coarse-to-fine \emph{multiscale} solvers exploit this special structure to avoid forming the full cost. Specifically, HALO~\cite{xia2026halo} builds a resolution hierarchy and solves the inner subproblems using a GPU-accelerated general LP solver, and M3S~\cite{chen2022m3s} is a multiscale entropic Sinkhorn method. In applications such as image interpolation~\cite{bonneel2011displacement} and histogram-based retrieval~\cite{rubner2000earth}, the marginals in the \emph{Grid} format are given as histograms on a shared rectangular grid and the cost is given by \emph{separable} per-axis terms. For this class of problems, flow reformulations such as~\cite{ling2007efficient} and~\cite{auricchio2018computing} open the door to the library of network flow algorithms. Building on this, HOT~\cite{zhang2025hot} rewrites squared-Euclidean grid OT as a reduced row/column-flow linear program that never forms the $m \times n$ transport plan. See Section~\ref{sec:prelim} for a formal definition of the formats.

Recently, Ma et al. proposed the BDRS algorithm \cite{ma2025bregman} for solving maximal monotone operator inclusion problems. When specialized to DOT \eqref{eq:primal-problem}, the BDRS algorithm takes the following form:
\begin{subequations}\label{eq:uv-iteration}
\begin{align}
u^k &= r / \big((X^{k-1} \odot K) v^{k-1}\big), \label{eq:uv-u}\\
v^k &= c / \big((X^{k-1} \odot K)^\top u^k\big), \label{eq:uv-v}\\
X^{k} &= \diag(u^k) (X^{k-1} \odot K) \diag(v^k), \label{eq:uv-X}
\end{align}
\end{subequations}
where $K:= e^{-C/\eta}$ is the componentwise exponential, i.e., $K_{ij}:= e^{-C_{ij}/\eta}$, $\forall i, j$, and $\eta > 0$ is a step size. Throughout this paper, for $y$ and $z$ with the same dimension, $y\odot z$ denotes their componentwise multiplication and $y/z$ denotes their componentwise division. In this paper, we consider GPU implementations of this algorithm for DOT. We demonstrate that our GPU implementations of BDRS have very promising numerical performance, although we also point out that the convergence of~\eqref{eq:uv-iteration} for DOT has not yet been fully understood. Some recent efforts aiming to understand the convergence behavior of BDRS are discussed below. Ma \cite{scaling-convergence} proved that when \eqref{eq:uv-iteration} is applied to solve the matrix scaling problem, which is equivalent to $C=0$, it is convergent under some restrictive but verifiable conditions. Ma~\cite{bdrs-can-fail} constructed a $72 \times 12$ instance of~\eqref{eq:primal-problem} and a strictly positive starting point $(X^0, v^0)$, from which, for every fixed $\eta > 0$, the iterates $X^k$ have six non-feasible sequential limits, and so fail to converge.
Hou and Toh~\cite{hou2026badmm} proved the $R$-linear convergence of an entropy Bregman ADMM with a logarithmic dual update and a multiplier step size $s \in (0,2)$. For $s = 1$, with a zero initial multiplier and matching starting points, their Bregman ADMM has the same updating scheme as~\eqref{eq:uv-iteration}. Their result assumes a feasible, strictly positive starting plan and a fixed penalty above an unknown threshold, which do not directly apply to our solvers.

{\bf Our Contributions.}
Our main contributions are summarized below.
\begin{itemize}
    \item We derive a \emph{factored representation} of the BDRS algorithm \cite{ma2025bregman}.
    We show that the logarithm of the transport plan in the BDRS iterations can be represented as
    \[
    \log X^k_{ij} = \xi^k_i + k L_{ij}  + \zeta^k_j,
    \]
    where $L_{ij} := -C_{ij} / \eta$, and the vectors $\xi^k, \zeta^k$ obey a two-term recursion. Under the factored representation, the BDRS iterations can stay \emph{matrix-free}, reducing the memory cost of the iterates from $O(mn)$ to $O(m+n)$.
    \item We provide \emph{three implementations} of BDRS to solve DOT problems, each specialized for one problem format. In particular, all three implementations adopt the factored form, but differ in how the term related to cost, $L_{ij}$, reaches the arithmetic. Our \emph{General} solver reads $L_{ij}$ from a stored $m \times n$ matrix, so the cost is the only $m \times n$ array its iterations and termination checks hold, and, for a cost supplied with $\max(C) = 1$, dense problems of up to $2^{32}$ decision variables fit in $80$~GB of VRAM (all $1368$ such instances of our benchmark at $m = n = 2^{16}$ do, at a peak of $64.8$~GiB), the largest that we are aware of among GPU-accelerated solvers of~\eqref{eq:primal-problem} for this format. Our \emph{Point Cloud} solver recomputes $L_{ij}$ from the point coordinates in registers, so the entire solve stays matrix-free and alleviates the memory constraint. Our \emph{Grid} solver exploits the separability of the cost to read $L_{ij}$ directly from one-dimensional tables, achieving the fastest runtimes among the Grid solvers of~\eqref{eq:primal-problem} we compare, with an iteration state of under $100$~MB at our largest scale.
    \item We propose the \emph{c-transform post-processing}, adopted by all three solvers. At termination checks, a c-transform of the running dual variables produces \emph{corrected dual variables}, which are feasible in exact arithmetic, at the cost of roughly one extra iteration. Then, the row residual $\|X\mathbf{1}_n - r\|_2$ of the transport plan and its duality gap with the corrected dual variables are checked against the user-provided tolerance; this test bounds the KKT residual of the triple by the tolerance plus the column discrepancy that the marginal floor introduces (Section~\ref{sec:prelim}). The corrected dual variables are not fed back into the iteration, so the BDRS trajectory is unchanged. On the instances we measured, the triple with the corrected dual variables meets the tolerance at an earlier check than the triple with the native ones, whose dual residual lags, so the solver stops after fewer iterations. Related linear-memory certificates appear in~\cite{bobrutsky2025quantization}.
    \item We assemble \emph{nine GPU-accelerated DOT solvers} from the literature, spanning all three formats, into \emph{a single benchmark}. Each solver runs its own native stopping rule, and we recover a primal--dual pair for the original problem \eqref{eq:primal-problem} from each solver's delivered state. Since the published implementations of some baselines do not provide the dual variables, we derive the required dual mappings. Every solver is thereby scored by the same KKT residual, evaluated on its primal--dual pair for~\eqref{eq:primal-problem}. The resulting accuracy--time Pareto comparison across all three input formats, on one GPU model, constitutes our main experimental contribution.
\end{itemize}

{\bf Organization.}
The remainder of the paper is organized as follows. Section~\ref{sec:prelim} defines the three problem formats, and states the KKT residual used throughout. Section~\ref{sec:algorithm} reviews the BDRS iteration for DOT and develops the c-transform post-processing used in all three BDRS solvers. Section~\ref{sec:impl} presents the factored form of the BDRS iterates and provides the implementation details of the three solvers. Section~\ref{sec:experiments} reports the cross-solver benchmark and the supporting computational studies. Section~\ref{sec:conclusion} concludes the paper and discusses some possible future directions. Appendix~\ref{app:duals} derives the primal--dual recovery for every baseline.

\section{Preliminaries}\label{sec:prelim}

\paragraph{The discrete OT problem.}
As the first step of every solve, our solvers floor the marginals internally because their log-domain updates require strict positivity. A floor $\delta_{\mathrm{m}} > 0$ is added to every entry, and then the marginal is normalized, with $\delta_{\mathrm{m}} = 10^{-12}$ by default. We write $\tilde r := (r + \delta_{\mathrm{m}}\mathbf{1}_m)/(1 + m\delta_{\mathrm{m}})$ and $\tilde c := (c + \delta_{\mathrm{m}}\mathbf{1}_n)/(1 + n\delta_{\mathrm{m}})$ for the floored marginals, and use $(r, c)$ for the instance's original marginals. The floored pair exists only for the BDRS iteration, while every termination-relevant quantity is evaluated against the original $(r, c)$. Next, we assume that $\max(C) > 0$ and normalize the cost matrix $C \gets C / \max(C)$ to ensure $\max(C) = 1$. Since the optimal solution set of \eqref{eq:primal-problem} is invariant under a positive rescaling of $C$, this establishes a common scale against which step sizes and residual magnitudes are read, without changing the primal solution $X^\star$. As a linear program with a nonempty, bounded feasible set, \eqref{eq:primal-problem} attains its optimum and strong duality holds at optimum, so its optimal value equals that of \eqref{eq:dual-problem}.

\paragraph{Problem formats.}
Section~\ref{sec:intro} classified prior solvers by the structure of the problem. We now define the three formats formally. In the definitions below, problem \emph{structure} and problem \emph{format} play distinct roles. The structure is a latent property of the problem and makes certain formats of the problem possible, while the format is that possibility materialized in memory. A solver consumes formats, not structure.

\begin{defn}[General]\label{def:general}
In the \emph{General} format, the cost $C \in \R^{m\times n}_{+}$ is a given nonnegative matrix, and the marginal distributions $r$ and $c$ are given probability distributions in their respective dimensions.
\end{defn}

\begin{defn}[Point Cloud]\label{def:pointcloud}
An instance in the \emph{Point Cloud} format is specified by points $x_1,\dots,x_m \in \R^d$ and $y_1,\dots,y_n \in \R^d$, a ground cost $\kappa\colon \R^d\times\R^d \to \R_{+}$, and marginals $r \in \Delta^{m-1}$, $c \in \Delta^{n-1}$. This induces problem~\eqref{eq:primal-problem} with the cost matrix $C_{ij} := \kappa(x_i, y_j)$, $i \in [m]$, $j \in [n]$.
\end{defn}

\begin{defn}[Grid]\label{def:grid}
  An instance of the \emph{Grid} format is specified by grid dimensions $h, w \in \mathbb{N}$, axis costs $\varphi_{\mathrm{r}}\colon \{0, 1, \dots, h-1\} \to \R_{+}$ and $\varphi_{\mathrm{c}}\colon \{0, 1, \dots, w-1\} \to \R_{+}$, and marginals $r, c \in \Delta^{hw-1}$ on the common index set $[h] \times [w]$, where $[s] :=\{1,2,\ldots,s\}$. Let $N := hw$. The provided data induce problem~\eqref{eq:primal-problem} with $m = n = N$ and the \emph{separable} cost matrix
\[
  C_{(p,q),(p',q')} := \varphi_{\mathrm{r}}(|p-p'|) + \varphi_{\mathrm{c}}(|q-q'|),
\]
indices written as row-column pairs under a fixed identification of $[h] \times [w]$ with $[N]$.
\end{defn}

These formats nest by generality: every Grid instance we consider is also a Point Cloud instance, and every Point Cloud instance is a General instance. A solver built for a broader format therefore also handles instances from a narrower one (by converting the input data into the broader format), whereas a solver that assumes more structure attains cheaper iterations and/or requires less device memory. We develop one implementation of BDRS for each format.

Table~\ref{tab:complexity} summarizes the device memory and the per-iteration FLOPs of our three solvers against the GPU baselines of each format. The Point Cloud rows are read under the assumption that each evaluation of an entry of $C$ (via computing $\kappa(x_i, y_j)$) costs $O(d)$ FLOPs, a common assumption every Point Cloud problem in this paper meets. Notably, in the General and Grid rows, our BDRS solvers (which target the original DOT problem~\eqref{eq:primal-problem}) have the same memory requirements and FLOPs per iteration as the respective entropic (Sinkhorn) solvers. This is not a coincidence, and we defer further analysis and comparison of the two algorithms to Remark~\ref{rem:compare-with-sinkhorn}.

\begin{table}[t]
\centering
\caption{Device memory requirement and per-iteration FLOPs, by format. $d$ is the dimension of the point clouds, and typically $d \ll \min(m, n)$. $s$ is a parameter dependent on data sparsity, with $s \le mn$ and typically $s \ll mn$.}
\label{tab:complexity}
\begin{tabular}{llll}
\hline
Format & Solver & Memory & FLOPs per iteration \\
\hline
General & \textbf{General (ours)} & $O(mn)$ & $O(mn)$ \\
 & DROT~\cite{mai2021drot} & $O(mn)$ & $O(mn)$ \\
 & PDOT~\cite{lu2024pdot} & $O(mn)$ & $O(mn)$ \\
 & cuPDLPx~\cite{lu2025cupdlpx} & $O(mn)$ & $O(mn)$ \\
 & Sinkhorn~\cite{cuturi2013sinkhorn,flamary2021pot} & $O(mn)$ & $O(mn)$ \\
 & LAMP~\cite{burns2025lamp} & $O(mn)$ & $O(mn)$ \\
\hline
Point Cloud & \textbf{Point Cloud (ours)} & $O((m+n)d)$ & $O(mnd)$ \\
 & HALO~\cite{xia2026halo} & $O((m+n)d + s)$ & data-dependent \\
 & M3S~\cite{chen2022m3s} & $O(m+n+s)$ & $O(s)$ \\
 & LAMP, linear space~\cite{burns2025lamp} & $O((m+n)d)$ & $O(mnd)$ \\
\hline
Grid & \textbf{Grid (ours)} & $O(N)$ & $O(N(h+w))$ \\
 & HOT~\cite{zhang2025hot} & $O(N(h+w))$ & $O(N(h+w))$ \\
 & Conv.\ Sinkhorn~\cite{solomon2015convolutional,cuturi2022ott} & $O(N)$ & $O(N(h+w))$ \\
\hline
\end{tabular}
\end{table}

\begin{rema}\label{rem:pc-meaning}
Every nonnegative $C$ of the General format is syntactically also a Point Cloud instance: with $\kappa(u,v) = (\langle u, v \rangle)_+$, $x_i = e_i$, and $y_j$ the $j$-th column of $C$, one has $C_{ij} = \kappa(x_i, y_j)$ with $d = m$; alternatively using the rows of $C$ gives $d = n$, so $d = \min(m,n)$ suffices. However, this transformation buys neither more desirable memory requirements nor a lower per-iteration complexity. If $\kappa$ is a fixed locally Lipschitz function that does not depend on $C$, the matrices $C \in \R^{m \times n}$ it represents with $(m+n)d < mn$ coordinates form a set of measure zero, so almost every $C$ requires $d \ge mn/(m+n)$, and at that dimension the format's $O((m+n)d)$ memory requirement and $O(mnd)$ per-iteration FLOPs are no better than those of the General solver. As a result, the Point Cloud format and its associated solver are designed for DOT instances whose cost has a low \emph{parametric dimension} $d$, meaning that the cost $C$ should be expressible by $O((m+n)d)$ parameters rather than $O(mn)$, with $d \ll mn/(m+n)$. In particular, $d \in \{2,3\}$ for every Point Cloud instance in this paper.
\end{rema}

\paragraph{Feasibility and Optimality.}
In the process of a solve, we track three residuals. The primal residual $r_P(X) = \|X\mathbf{1}_n - r\|_2 + \|X^\top\mathbf{1}_m - c\|_2 + \|\min(X, 0)\|_F$ measures the violation of the marginal and nonnegativity constraints. Since the BDRS iterates maintain $X \ge 0$ by construction, the third term vanishes for all BDRS iterates and our solvers never compute it. The dual residual $r_D(\alpha,\beta) = \|(\alpha \oplus \beta - C)_+\|_F$ measures the violation of dual feasibility $\alpha \oplus \beta \le C$. Here $\alpha \oplus \beta := \alpha\mathbf{1}_n^\top + \mathbf{1}_m\beta^\top \in \R^{m\times n}$. The duality gap of a primal--dual pair is $g(X,\alpha,\beta) := \langle C, X\rangle - (\langle \alpha, r\rangle + \langle \beta, c\rangle)$, which measures optimality. The inner products $\langle \alpha, r\rangle$ and $\langle \beta, c\rangle$ are summed over the entries of positive mass, i.e., with the convention $0 \cdot (\pm\infty) = 0$ here, so that a potential of $-\infty$ at a point of zero mass, which some entropic baselines return, leaves the dual objective finite. By weak duality, the dual objective of any feasible $(\alpha,\beta)$ lower-bounds the optimal cost, and by strong duality, the gap is zero if and only if the feasible primal--dual pair is also optimal. Therefore, a small gap evaluated at a feasible primal--dual pair certifies near-optimality.

\paragraph{Error metrics and stopping criteria.}
All solvers are compared on the three residuals above, each evaluated in $O(m+n)$ auxiliary memory. We aggregate them into the \emph{KKT residual} of a primal--dual triple,
\begin{equation} \label{eq:kkt}
  \KKT(X,\alpha,\beta) := \max\{ r_P(X), r_D(\alpha,\beta), |g(X,\alpha,\beta)| \}.
\end{equation}
All three of our solvers return $(X^k, \alpha', \beta')$ at a termination check when $\|X^k\mathbf{1}_n - r\|_2 < \mathrm{tol}$ and $|g(X^k, \alpha', \beta')| < \mathrm{tol}$, where $(\alpha', \beta')$ is the corrected dual of Section~\ref{sec:c-transform} and $\mathrm{tol}$ is a user-specified tolerance that defaults to $10^{-4}$. In exact arithmetic, $r_D(\alpha', \beta') = 0$ and the iteration enforces $X^{k\top}\mathbf{1}_m = \tilde c$, so this test guarantees $\KKT(X^k, \alpha', \beta') < \mathrm{tol} + \|\tilde c - c\|_2$. In floating point, both identities hold only up to rounding, and so does the bound. On every DOTmark instance in our benchmark, $\|\tilde c - c\|_2 \le 2.4 \times 10^{-9}$, so with $\mathrm{tol} = 10^{-8}$ the bound is at most $1.24 \times 10^{-8}$.

\section{BDRS with c-transform post-processing}\label{sec:algorithm}

\subsection{The Bregman Douglas--Rachford splitting iteration}\label{sec:bdrs}
Our solvers implement the BDRS method of~\cite{ma2025bregman}.
We apply it to~\eqref{eq:primal-problem} under the Boltzmann--Shannon entropy kernel $h(X) = \sum_{i,j} X_{ij}(\log X_{ij} - 1)$. The domain of $h$ enforces the constraint $X \ge 0$ throughout the iterates, and each resolvent is a Kullback--Leibler (KL) projection onto one marginal distribution, available in closed form. The algorithm BDRS for DOT is given in \eqref{eq:uv-iteration} and we refer to~\cite{ma2025bregman} for details about the algorithm.
The primal iterate $X^k \ge 0$ plays the role of the Lagrange multiplier of the dual inequality $\alpha \oplus \beta \le C$, with~\eqref{eq:uv-X} as its multiplier update, and the scalings $u^k$ and $v^k$ carry the potentials of~\eqref{eq:dual-problem} as $\alpha^k = \eta\log u^k$ and $\beta^k = \eta\log v^k$. We start from $X^0 = \tfrac{1}{mn}\mathbf{1}_m\mathbf{1}_n^\top$ and $v^{0} = \mathbf{1}_n$, running~\eqref{eq:uv-iteration} for $k = 1, 2, \dots$. The step size $\eta$ is held constant across iterations. Its deployed value is set by a fixed rule of the ground cost, given in Section~\ref{sec:experiment-setup}.

Every factor in~\eqref{eq:uv-u}--\eqref{eq:uv-X} is strictly positive, so $X^k > 0$ for all $k$ in exact arithmetic. However, the guarantee is lost in floating-point arithmetic. We implement~\eqref{eq:uv-iteration} in the log domain to address this issue, as detailed in Section~\ref{sec:impl}.

\begin{rema} \label{rem:compare-with-sinkhorn}
The iteration~\eqref{eq:uv-iteration} can be interpreted as the \emph{alternating direction exponential multiplier method} (ADEMM)~\cite{ma2025bregman}. More specifically, the~\eqref{eq:uv-u} and~\eqref{eq:uv-v} steps minimize, alternately over the two potential blocks, the Lagrangian of~\eqref{eq:dual-problem} penalized by an exponential rather than quadratic penalty function (as in the classical exponential multiplier method~\cite{bertsekas1982constrained,tseng1993exponential}), and \eqref{eq:uv-X} is the corresponding multiplier update. Moreover, we draw connections between~\eqref{eq:uv-iteration} and the Sinkhorn iterations, whose iteration on the entropically regularized problem with regularization $\varepsilon$ is
\begin{equation}\label{eq:sinkhorn-iteration}
  u^k = r /\big(K_\varepsilon v^{k-1}\big), \qquad
  v^k = c /\big(K_\varepsilon^\top u^{k}\big), \qquad
  K_\varepsilon := e^{-C/\varepsilon}.
\end{equation}
We make the observation that \eqref{eq:uv-u} and \eqref{eq:uv-v} are Sinkhorn updates with the constant kernel $K_\varepsilon$ replaced by the iteration-dependent $X^{k-1} \odot K$. However, BDRS is applied to the \emph{original} DOT~\eqref{eq:primal-problem} rather than its entropically regularized surrogate.
The solution of Sinkhorn's algorithm approaches that of~\eqref{eq:primal-problem} only as $\varepsilon \to 0$.
An iteration of BDRS has almost the same cost as a Sinkhorn iteration because~\eqref{eq:uv-iteration} pays only one additional matrix Hadamard product to obtain $X^{k-1} \odot K$.
\end{rema}

\subsection{The c-transform certificate and the termination check procedure}\label{sec:c-transform}

In our numerical experiments, we observed that the three optimality residuals in \eqref{eq:kkt} behave differently along the trajectory generated by~\eqref{eq:uv-iteration}. The primal residual $r_P^k := r_P(X^k)$ meets the tolerance $\mathrm{tol}$ in relatively few iterations. The native duality gap $g(X^k,\alpha^k,\beta^k)$ at the running iterates meets it within a comparable number of iterations. However, the dual residual $r_D^k$ does not. At the termination check where the other two first meet the tolerance, $r_D^k$ is often still orders of magnitude above them. Section~\ref{sec:exp-ablation} presents a numerical demonstration of this dual residual stalling phenomenon.

The stall matters because, with a greatly infeasible dual, a small duality gap is not a valid measure of suboptimality. The dual objective $\langle\alpha^k,r\rangle + \langle\beta^k,c\rangle$ lower-bounds the optimal value of~\eqref{eq:primal-problem} only when $(\alpha^k,\beta^k)$ is feasible. When $r_D^k$ is relatively large, the dual objective may overshoot, so the native gap $g(X^k,\alpha^k,\beta^k)$ is \emph{optimistic}.

We propose to restore dual feasibility in a \emph{one-shot} post-processing step. For a fixed row potential $\alpha$, the largest column potential that keeps dual feasibility $\alpha\oplus\beta \le C$ is the \emph{c-transform}~\cite{peyre2019computational}
\begin{equation}\label{eq:c-transform}
  \beta'_j := \min_{1\le i\le m}(C_{ij} - \alpha_i), \qquad \forall j \in [n] .
\end{equation}
By construction, we have $\alpha_i + \beta'_j \le C_{ij}$ for every $(i,j)$, so $(\alpha,\beta')$ is dual feasible up to machine precision. Since $\beta'$ is the largest feasible choice given $\alpha$, it yields the tightest gap available for that choice of $\alpha$. The symmetric transform $\alpha'_i := \min_{1\le j\le n}(C_{ij} - \beta_j)$ does the same for a given choice of $\beta$, and we apply the two sequentially: we first form $\beta'$ from $\alpha^k$ by~\eqref{eq:c-transform} and then $\alpha'_i := \min_{1\le j\le n}(C_{ij} - \beta'_j)$, so that $\beta^k$ is not used. We call $(\alpha', \beta')$ the \emph{corrected dual}.

Because $(\alpha',\beta')$ is feasible, weak duality makes $\langle\alpha',r\rangle + \langle\beta',c\rangle$ a true (though not necessarily tight) lower bound to the optimal objective value of~\eqref{eq:primal-problem}. Writing $p^\star$ for the optimal value of~\eqref{eq:primal-problem}, weak duality gives $\langle C, X^k\rangle - p^\star \le g(X^k,\alpha',\beta')$ for every $X^k$, and in addition, $0 \le \langle C, X^k\rangle - p^\star$ when $X^k$ is feasible. The duality gap of the primal iterate and the corrected dual hence bounds the optimality of a near-feasible $X^k$ only from above. Therefore, our solvers adopt the certified gap $g(X^k,\alpha',\beta')$ as opposed to the optimistic native gap $g(X^k,\alpha^k,\beta^k)$.

This gives the termination check procedure we implement in all three BDRS solvers. At a check we form $(\alpha',\beta')$ from~\eqref{eq:c-transform} and its $\alpha$-counterpart, and terminate at tolerance $\mathrm{tol}$ once $\|X^k\mathbf{1}_n - r\|_2 < \mathrm{tol}$ and $|g(X^k,\alpha',\beta')| < \mathrm{tol}$. Further, evaluating $r_P^k$ at a termination check is cheaper than its definition suggests. After the $v$-update, the column sums of $X^k$ agree with $c$ exactly, up to the flooring of the marginals, which is small relative to our tolerances:
\[
\sum_i X^k_{ij} = v^k_j\big((X^{k-1}\odot K)^\top u^k\big)_j = \tilde{c}_j \approx c_j \quad \forall j.
\]
Therefore, all three solvers' termination checks evaluate the row residual alone. The c-transform runs only at the checks where the row residual $\|X^k\mathbf{1}_n - r\|_2$ is below $\mathrm{tol}$. Termination checks are performed every $T_{\mathrm{chk}}$ iterations, with a default value dependent on problem size. Details of the $T_{\mathrm{chk}}$ scheme, as well as a numerical study on the robustness of our solvers with respect to $T_{\mathrm{chk}}$, can be found in Section~\ref{sec:ablation-robustness}.

\begin{rema}
The corrected duals $(\alpha',\beta')$ serve only to check termination. If the termination check fails and $(X^k, \alpha', \beta')$ is not accepted, then the iteration resumes from the un-transformed primal--dual iterate $(X^k, \alpha^k, \beta^k)$, not from the corrected duals.
\end{rema}

Our three solvers share the following template of abstract operations.
\begin{enumerate}
  \item \textsc{Normalize}. Rescale the cost so that $\max (C) = 1$ and floor the marginals, yielding $(C, \tilde r, \tilde c)$ as in Section~\ref{sec:prelim}. Also, initialize $(X^0, v^0) = (\tfrac{1}{mn}\mathbf{1}_m\mathbf{1}_n^\top, \mathbf{1}_n)$.
  \item \textsc{BdrsStep}. For $k = 1, 2, \dots$, advance $(X^{k-1}, v^{k-1})$ to $(X^k, v^k)$ by the BDRS iteration~\eqref{eq:uv-iteration}. Note that $u^{k-1}$ is never consumed by the $k$-th iteration, so the iteration itself carries only $(X^{k-1}, v^{k-1})$. \label{step:2}
  \item \textsc{PrimalRes}. When $k \equiv 0 \mod T_{\mathrm{chk}}$, evaluate the row residual $\|X^k\mathbf{1}_n - r\|_2$. If it is at least $\mathrm{tol}$, go back to step~\ref{step:2}.
  \item \textsc{CTransform} and \textsc{Gap}. Otherwise, form the corrected duals $(\alpha', \beta')$ by~\eqref{eq:c-transform} and its $\alpha$-counterpart, and compute the gap $g(X^k, \alpha', \beta')$. If $|g| < \mathrm{tol}$, return $(X^k, \alpha', \beta')$; otherwise, go back to step~\ref{step:2}.
\end{enumerate}
If the iteration budget $K_{\max}$ (defaults to $10^6$ in our benchmark, see Section~\ref{sec:experiment-setup}) is exhausted, the solver returns $(X^{K_{\max}}, \alpha', \beta')$ with the potentials c-transformed at the final iterate, so that the delivered solution is dual feasible whether the solver accepted the iterates or not.

\section{Implementation Details}\label{sec:impl}
In this section, we describe the implementation details of our solvers. In particular, everything described in this section stays mathematically equivalent to the BDRS with c-transform post-processing algorithm described in the previous section. We first motivate and develop the \emph{log-domain factored form} of the BDRS iteration in Subsection~\ref{sec:impl-factored}, which all three solvers carry. Then, we describe the specific details of each of our solvers, from the least structured format to the most: Subsections~\ref{sec:impl-general}, \ref{sec:impl-pointcloud}, and~\ref{sec:impl-grid} describe our implementations of BDRS for the General, Point Cloud, and Grid formats, respectively.

\subsection{The log-domain factored form}\label{sec:impl-factored}
Implementing~\eqref{eq:uv-iteration} as-is causes numerical issues. Recall that the standard IEEE floating-point representation~\cite{ieee2019fp} has a finite exponent range. The update~\eqref{eq:uv-X} is multiplicative and hence additive in the exponents: each iteration multiplies $u^k$, $X^{k-1}$, $K$ and $v^k$, so the exponent of $X^k_{ij}$ accumulates over all $k$ iterations, and the factor $K$ alone contributes $-kC_{ij}/\eta$ to it, a term unbounded below in $k$ whenever $C_{ij} > 0$. Empirically, the entries of the transport plan that decay towards zero can cross the smallest representable FP64 threshold $5 \times 10^{-324}$, in which case the entry underflows to numerical zero, and a zero entry in the transport plan stays zero under~\eqref{eq:uv-X}. Moreover, if an underflowed entry carried most of the mass of its row (resp. column), the scaling update~\eqref{eq:uv-u} (resp.~\eqref{eq:uv-v}) inflates towards infinity, and the c-transform as well as the duality gap become undefined.

To address these issues, all three solvers run in the \emph{log domain}, storing logarithms of the iterates rather than the iterates themselves. In particular, an entry that underflows FP64 at $5\times10^{-324}$ has logarithm ${\approx}-745$, a double well within the representable range, so the accumulation of~\eqref{eq:uv-X} becomes an ordinary addition of exponents.

Our next proposition shows that, in this domain, we can factor the transport plan $\log X$ into two accumulation vectors and the iteration counter.

\begin{prop}[The factored form of the log-domain BDRS iteration]\label{prop:factored}
Denote $L := \log K = - C/\eta$, and assume $r > 0$ and $c > 0$. Let $\{(u^k, v^k, X^k)\}$ be the sequence of iterates generated by~\eqref{eq:uv-iteration} with initializations $X^0 = \tfrac{1}{mn}\mathbf{1}_m\mathbf{1}_n^{\top}$, $v^0 = \mathbf{1}_n$, and define the vectors $\xi^k \in \R^m$ and $\zeta^k \in \R^n$ by
\begin{equation}\label{eq:factored-def}
\xi^k_i := -\log m + \sum_{t=1}^{k}\log u^t_i,
\qquad
\zeta^k_j := -\log n + \sum_{t=1}^{k}\log v^t_j,
\end{equation}
for all $i \in [m]$ and $j \in [n]$. Then, for every $k \ge 0$,
\begin{equation} \label{eq:factored-plan}
  \log X^k_{ij} = \xi^k_i + kL_{ij} + \zeta^k_j \qquad \forall (i, j) \in [m] \times [n],
\end{equation}
and for every $k \ge 1$, starting from $\xi_i^0 = -\log m$ and $\zeta^0_j = -\log n$, the vectors obey the recursions
\begin{equation}\label{eq:factored-iter}
\xi^k_i = \log r_i - \LSE_j\big(kL_{ij} + \tilde\beta_j\big),
\qquad
\zeta^k_j = \log c_j - \LSE_i\big(kL_{ij} + \xi^k_i\big),
\end{equation}
where $\tilde\beta_j := \zeta^{k-1}_j + \log v^{k-1}_j$ and $\LSE_i$, $\LSE_j$ denote the Log-Sum-Exp (LSE) operation over the indices $i$ and $j$ of the input matrix, respectively.
\end{prop}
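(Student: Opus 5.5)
The plan is to prove both claims together by induction on $k$, working entirely with logarithms of the strictly positive iterates. Since $r>0$, $c>0$, $X^0>0$, $v^0>0$ and $K>0$, every quantity in \eqref{eq:uv-iteration} is strictly positive by induction, so all logarithms below are well defined. The only tool needed is the identity $\log\sum_j e^{a_j} = \LSE_j(a_j)$, together with the shift property $\LSE_j(a_j + b) = b + \LSE_j(a_j)$ for a constant $b$.

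\emph{Base case and the factored plan.} For $k=0$, we have $\log X^0_{ij} = -\log m - \log n = \xi^0_i + 0\cdot L_{ij} + \zeta^0_j$, which is \eqref{eq:factored-plan}. For the inductive step, I take logarithms of \eqref{eq:uv-X}:
\[
\log X^k_{ij} = \log u^k_i + \log X^{k-1}_{ij} + L_{ij} + \log v^k_j .
\]
Inserting the hypothesis $\log X^{k-1}_{ij} = \xi^{k-1}_i + (k-1)L_{ij} + \zeta^{k-1}_j$ gives
\[
\log X^k_{ij} = \bigl(\xi^{k-1}_i + \log u^k_i\bigr) + kL_{ij} + \bigl(\zeta^{k-1}_j + \log v^k_j\bigr).
\]
By \eqref{eq:factored-def}, the two brackets are exactly $\xi^k_i$ and $\zeta^k_j$. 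This also gives the increments $\xi^k = \xi^{k-1} + \log u^k$ and $\zeta^k = \zeta^{k-1} + \log v^k$, which I will use for the recursions.

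\emph{Recursions.} Taking logarithms of \eqref{eq:uv-u} gives
\[
\log u^k_i = \log r_i - \LSE_j\bigl(\log X^{k-1}_{ij} + L_{ij} + \log v^{k-1}_j\bigr).
\]
Substituting the factored form of $\log X^{k-1}$, the argument becomes $\xi^{k-1}_i + kL_{ij} + \zeta^{k-1}_j + \log v^{k-1}_j = \xi^{k-1}_i + kL_{ij} + \tilde\beta_j$. Pulling the $j$-independent term $\xi^{k-1}_i$ out of the $\LSE$ yields $\xi^{k-1}_i + \log u^k_i = \log r_i - \LSE_j(kL_{ij} + \tilde\beta_j)$, and the left side is $\xi^k_i$. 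This is the first identity in \eqref{eq:factored-iter}.

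For the second identity, I take logarithms of \eqref{eq:uv-v}:
\[
\log v^k_j = \log c_j - \LSE_i\bigl(\log u^k_i + \log X^{k-1}_{ij} + L_{ij}\bigr).
\]
The argument equals $\xi^k_i + kL_{ij} + \zeta^{k-1}_j$, using $\xi^{k-1}_i + \log u^k_i = \xi^k_i$. Pulling the $i$-independent term $\zeta^{k-1}_j$ out gives $\zeta^k_j = \log c_j - \LSE_i(kL_{ij} + \xi^k_i)$. At $k=1$, the convention $\log v^0 = 0$ makes $\tilde\beta = \zeta^0$, so the recursion starts correctly from the stated initial values.

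There is no genuine obstacle here; the argument is bookkeeping. The one point needing care is the order of dependence. The $v$-update uses the \emph{new} $u^k$, so $\xi^k$ rather than $\xi^{k-1}$ appears in the $\zeta$-recursion. The $u$-update uses the \emph{old} $v^{k-1}$, which is why the extra term $\log v^{k-1}$ survives in $\tilde\beta$ instead of being absorbed into $\zeta^{k-1}$. I will also keep the marginals as $r, c$, exactly as written in \eqref{eq:uv-iteration}; in the implementation they are replaced by the floored $\tilde r, \tilde c$, which the assumption $r, c > 0$ covers.
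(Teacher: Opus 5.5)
Your proposal is correct and follows the paper's proof. Like the paper, you first prove \eqref{eq:factored-plan} by induction on $k$, taking the logarithm of \eqref{eq:uv-X}. You then take logarithms of \eqref{eq:uv-u} and \eqref{eq:uv-v}, substitute the factored form at $k-1$, and pull the index-independent term $\xi^{k-1}_i$ or $\zeta^{k-1}_j$ out of the $\LSE$. Your explicit positivity check, which makes every logarithm well defined, is a small point the paper leaves implicit.
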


\begin{proof}
  First, we prove~\eqref{eq:factored-plan} by induction on $k$. \textbf{Base case:} at $k=0$, $\xi^0_i + 0 \cdot L_{ij} + \zeta^0_j = - \log(mn) = \log X^0_{ij}$, as desired. \textbf{Inductive step:} assume that~\eqref{eq:factored-plan} holds at iteration $k-1$. Taking the logarithm of both sides of~\eqref{eq:uv-X},
  \begin{align*}
    \log X^k_{ij} &= \log u^k_i + \log X^{k-1}_{ij} + L_{ij} + \log v^k_j \\
    &= \big(\xi^{k-1}_i + \log u^k_i\big) + k L_{ij} + \big(\zeta^{k-1}_j + \log v^k_j\big) \\
    &= \xi_i^k + kL_{ij} + \zeta_j^k,
  \end{align*}
  as desired.

  Next, we prove~\eqref{eq:factored-iter} from~\eqref{eq:factored-plan}.
  Using~\eqref{eq:factored-plan} at iteration $k-1$ and taking logarithms of both sides of~\eqref{eq:uv-u},
  \begin{align*}
    \log u^k_i &= \log r_i - \log \Big( \textstyle\sum_j \exp\big( \log X^{k-1}_{ij} + L_{ij} + \log v^{k-1}_j \big) \Big) \\
    \log u^k_i &= \log r_i - \LSE_j\big( \xi^{k-1}_i + kL_{ij} + \zeta^{k-1}_j + \log v^{k-1}_j \big) \\
    \log u^k_i &= \log r_i - \xi^{k-1}_i - \LSE_j\big( kL_{ij} + \tilde\beta_j \big) \\
    \xi^{k-1}_i + \log u^k_i &= \log r_i - \LSE_j\big( kL_{ij} + \tilde\beta_j \big) \\
    \xi^k_i &= \log r_i - \LSE_j\big( kL_{ij} + \tilde\beta_j \big),
  \end{align*}
  where the second equality substitutes~\eqref{eq:factored-plan} at iteration $k-1$ into the definition of $\LSE$, and the third moves the $j$-independent term $\xi^{k-1}_i$ out of the $\LSE$. Similarly, taking logarithms of both sides of~\eqref{eq:uv-v},
  \begin{align*}
    \log v^k_j &= \log c_j - \LSE_i\big( \log X^{k-1}_{ij} + L_{ij} + \log u^k_i \big) \\
    \log v^k_j &= \log c_j - \LSE_i\big( \xi^{k-1}_i + \log u^k_i + kL_{ij} + \zeta^{k-1}_j \big) \\
    \log v^k_j &= \log c_j - \zeta^{k-1}_j - \LSE_i\big( kL_{ij} + \xi^k_i \big) \\
    \zeta^{k-1}_j + \log v^k_j &= \log c_j - \LSE_i\big( kL_{ij} + \xi^k_i \big) \\
    \zeta^k_j &= \log c_j - \LSE_i\big( kL_{ij} + \xi^k_i \big),
  \end{align*}
  as desired.
\end{proof}

\begin{rema}
Proposition~\ref{prop:factored} changes the memory required for the algorithm state. By~\eqref{eq:factored-plan}, the plan $X^k$ is determined by the two vectors $\xi^k, \zeta^k$, the counter $k$ and the fixed matrix $L$, and by~\eqref{eq:factored-iter}, the next pair $(\xi^{k+1}, \zeta^{k+1})$ is computed from the current pair and $\log v^k$, so the state a solver carries from one iteration to the next is $\xi^k$, $\zeta^k$ and $\log v^k$, for a total of $O(m+n)$ memory requirement. As a result, the iteration state is never the cause of a memory bottleneck, whatever the format.
\end{rema}

\paragraph{The log-sum-exp operation.}Both the numerical stability and the computational cost of a BDRS iteration rest on how the two $\LSE$ reductions of~\eqref{eq:factored-iter} are evaluated. We apply two standard implementation tricks together.

The first is the \emph{max shift}~\cite{blanchard2021lse}. Written directly, $\log\sum_i \exp(a_i)$ overflows once any $a_i$ exceeds $\log(1.8\times10^{308}) \approx 710$. Subtracting the maximum away from each summand,
\begin{equation}\label{eq:max-shift-LSE}
  (\LSE_i Z)_j := \max_i Z_{ij} + \log \sum_{i} \exp\big(Z_{ij} - \max_i Z_{ij}\big)
  \equiv \log \sum_{i} \exp(Z_{ij}),
\end{equation}
we make every argument of $\exp$ non-positive, so each summand lies in $(0,1]$ and the sum in $[1,m]$, inside the normal-number window with consistent relative spacing between representable numbers.

The second is the \emph{online} form~\cite{milakov2018online}. The max-shift approach~\eqref{eq:max-shift-LSE} must already know the maximum in order to aggregate the summands, so it either reads the arguments twice, once for the maximum and once for the sum, or holds all of them at once. The online form does neither. It represents any partial reduction by a \emph{state} $(\mu, s)$, with $\mu$ the largest argument the partial reduction has absorbed and $s = \sum \exp(a_i - \mu)$ the sum over those arguments, so that the value of the partial reduction is $\mu + \log s$ and a single argument $a$ is the state $(a, 1)$. Two states combine by the merge operator $\boxplus$ defined as
\begin{equation}\label{eq:lse-merge}
  (\mu_1, s_1) \boxplus (\mu_2, s_2) := \big(M, s_1 e^{\mu_1 - M} + s_2 e^{\mu_2 - M}\big), \qquad M := \max(\mu_1, \mu_2),
\end{equation}
which is associative and commutative, so a reduction may be split across the threads of a block and reassembled in any order. Absorbing one new argument $a$ into a state is the special case $(\mu, s) \boxplus (a, 1)$, in which one of the two exponentials is $e^{0} = 1$:
\begin{equation}\label{eq:online-lse}
  (\mu, s) \gets
  \begin{cases}
    \big(a, s e^{\mu - a} + 1\big), & a > \mu, \\
    \big(\mu, s + e^{a - \mu}\big), & a \le \mu.
  \end{cases}
\end{equation}
Since $\mu$ is the running maximum, every exponent in~\eqref{eq:online-lse} is non-positive and $s \in [1, t]$ after $t$ arguments, so the online form keeps the guarantee of~\eqref{eq:max-shift-LSE} while reading each argument once. We quantify the concrete effect of the difference in memory traffic in the roofline analysis below.

\paragraph{Roofline model.} On top of numerical stability, the factored form also makes the iterations computationally efficient on a GPU at scale. In the roofline model~\cite{williams2009roofline}, the binding constraint of a computational pipeline is set by its arithmetic intensity, the ratio of compute performed over the memory traffic moved. The BDRS iterations have low arithmetic intensity relative to the hardware ridge, so the model predicts that moving fewer bytes of memory translates into less runtime. Under~\eqref{eq:factored-iter}, the entire algorithm state is the vectors $\xi^k$, $\zeta^k$ and $\log v$, so no $m \times n$ array is moved between iterations. Concretely, take $m = n = N$ in FP64, so that one read or write pass over an $N \times N$ array moves $8N^2$ bytes. A direct implementation of~\eqref{eq:uv-iteration} with $X^{k-1} \odot K$ being as a single array reads it once per matrix--vector product, and once more, together with $K$, to write the update, resulting in five passes for a total of $40N^2$ bytes moved, against a multiply-add per entry in each product and three multiplies in the update, for a total of about $7N^2$ FLOPs. Under that implementation, an iteration of BDRS would yield an arithmetic intensity of about $0.2$ FLOP per byte. In the General format, by contrast, the factored form reads the cost matrix $C$ once per $\LSE$, since the online form~\eqref{eq:online-lse} touches each summand once and $kL_{ij}$ is formed in registers and immediately discarded, for a total of two passes or $16N^2$ bytes. Note that if we adopted only the max-shift form~\eqref{eq:max-shift-LSE}, each $\LSE$ would read $C$ twice for a total memory traffic of about $32N^2$ bytes instead of $16N^2$. Each entry costs one multiply to form $kL_{ij}$ from $C_{ij}$, one addition for the vector term $\tilde\beta_j$ or $\xi^k_i$, and, in the $a \le \mu$ branch of~\eqref{eq:online-lse}, a subtraction, an exponential and an addition; over the two passes that is about $10N^2$ FLOPs, for an intensity of about $0.6$ FLOP per byte. This number changes drastically for the Point Cloud and Grid formats, as the cost matrix is supplied in alternative ways. We defer the details of their roofline analysis to their respective subsections. For reference, the NVIDIA A100-SXM4-80GB used in our experiments delivers $9.7$~TFLOP/s in FP64 against $2.039$~TB/s of memory bandwidth, placing the ridge of its roofline at about $4.8$ FLOP per byte~\cite{nvidia2021a100datasheet}. Both iterations' arithmetic intensities lie below the ridge value, so the roofline model predicts that both are bandwidth bound and that their time is set by bytes moved, of which the factored form moves $2.5$ times fewer. The conclusion holds on modern data-center GPUs with full-rate FP64 arithmetic. One popular example would be the NVIDIA H100, whose ridge point is about $10.1$~\cite{choquette2023hopper}.

\subsection{The General Solver}\label{sec:impl-general}

In the General format, the cost arrives as an $m \times n$ array $C$ in device memory, and nothing is assumed about it except that $C \ge 0$ and $C$ is not the zero matrix. This forces us to pass an $m \times n$ matrix through each of our iterations, and has two direct consequences. Firstly, for certain large DOT instances, the matrix simply may not fit. Concretely, the matrix requires $8mn$ bytes in FP64, which is $34$~GB at $m = n = 2^{16}$ and $550$~GB at $2^{18}$, the latter well beyond the typical $80$~GB VRAM of a server GPU. Secondly, anytime the iterations need an entry from the cost matrix, it will always cost a read. The two design questions for this format are therefore how to keep $C$ the \emph{only} $m \times n$ array the iterations hold, and how to read it as few times per iteration as possible. The factored form answers both questions.

\paragraph{Representation.} The General solver carries the factored state of Proposition~\ref{prop:factored} in the vectors $\xi^k \in \R^m$, $\zeta^k \in \R^n$ and $\log v^k \in \R^n$, and supplies the cost term of the recursions~\eqref{eq:factored-iter} by reading $C_{ij}$ from the stored array and forming $kL_{ij} = -kC_{ij}/\eta$ in registers, one multiply per entry, at the moment the $\LSE$ consumes it. Neither $K$, nor $L$, nor any iterate is stored as an $m \times n$ array, so $C$ is the only $m \times n$ array the iterations hold. Each iteration streams $C$ exactly twice, once per $\LSE$ of~\eqref{eq:factored-iter}, which results in the $16N^2$ bytes of memory traffic and $0.6$ FLOP per byte arithmetic intensity of Section~\ref{sec:impl-factored}, so that the roofline model predicts this solver to be bandwidth bound. The plan is delivered in the factored form $(\xi, \zeta, k)$ and recovered through~\eqref{eq:factored-plan} only when a dense $X$ is requested. In particular, our entire termination check procedure does not require the transport plan to be materialized.

\begin{algorithm}[t]
\caption{BDRS with c-transform for DOT in the General format}\label{alg:general}
\KwIn{cost $C\in\R^{m\times n}_{\ge 0}$; marginals $r\in\Delta^{m-1}$, $c\in\Delta^{n-1}$; step size $\eta>0$; marginal floor $\delta_{\mathrm{m}}$; tolerance $\mathrm{tol}$; check period $T_{\mathrm{chk}}$; iteration budget $K_{\max}$}
\KwOut{factored form of the plan $(\xi,\zeta,k)$ and dual-feasible $(\alpha',\beta')$}
$C \gets C / \max_{ij} C_{ij}$, $\tilde r \gets (r + \delta_{\mathrm{m}}\mathbf{1}_m)/(1 + m\delta_{\mathrm{m}})$;\enspace $\tilde c \gets (c + \delta_{\mathrm{m}}\mathbf{1}_n)/(1 + n\delta_{\mathrm{m}})$\nllabel{ln:gen-normalize}\tcp*[r]{\textsc{Normalize}}
$\xi\gets-\log m\,\mathbf{1}_m$;\enspace $\zeta\gets-\log n\,\mathbf{1}_n$;\enspace $\log v\gets\mathbf{0}_n$\nllabel{ln:gen-init}\tcp*[r]{$(X^0,v^0)$ in the factored form~\eqref{eq:factored-def}}
\For{$k = 1, 2, \dots, K_{\max}$}{
  $\tilde\beta\gets \zeta+\log v$;\quad $\xi^{\mathrm{old}}\gets \xi$\;
  $\xi_i \gets \log\tilde r_i - \LSE_j\big({-}kC_{ij}/\eta + \tilde\beta_j\big)$\nllabel{ln:gen-ulse}\tcp*[r]{$\xi$-update of~\eqref{eq:factored-iter}}
  $\zeta^{\mathrm{old}}\gets \zeta$;\quad $\zeta_j \gets \log\tilde c_j - \LSE_i\big({-}kC_{ij}/\eta + \xi_i\big)$\nllabel{ln:gen-vlse}\tcp*[r]{$\zeta$-update of~\eqref{eq:factored-iter}}
  $\log v\gets \zeta-\zeta^{\mathrm{old}}$\nllabel{ln:gen-advance}\;
  \If{$k \bmod T_{\mathrm{chk}} = 0$}{
    $\rho_i \gets \exp\big(\xi_i + \LSE_j({-}kC_{ij}/\eta + \zeta_j)\big)$\nllabel{ln:gen-proxy}\tcp*[r]{\textsc{PrimalRes}}
    \If{$\|\rho - r\|_2 < \mathrm{tol}$}{
      $\alpha \gets \eta(\xi - \xi^{\mathrm{old}})$;\enspace $\beta'_j \gets \min_i(C_{ij} - \alpha_i)$;\enspace $\alpha'_i \gets \min_j(C_{ij} - \beta'_j)$\nllabel{ln:gen-ct}\tcp*[r]{\textsc{CTransform}}
      $\texttt{obj} \gets \textstyle\sum_{ij} C_{ij}\exp\big(\xi_i - kC_{ij}/\eta + \zeta_j\big)$\nllabel{ln:gen-obj}\tcp*[r]{primal objective $\langle C, X^k\rangle$}
      \If{$|\texttt{obj} - \langle\alpha',r\rangle - \langle\beta',c\rangle| < \mathrm{tol}$}{
        \Return $(\xi, \zeta, k, \alpha', \beta')$\tcp*[r]{factored representation of $X$, via~\eqref{eq:factored-plan}}
      }
    }
  }
}
$\alpha \gets \eta(\xi - \xi^{\mathrm{old}})$;\enspace $\beta'_j \gets \min_i(C_{ij} - \alpha_i)$;\enspace $\alpha'_i \gets \min_j(C_{ij} - \beta'_j)$\tcp*[r]{feasible dual for the returned iterate}
\Return $(\xi, \zeta, k, \alpha',\beta')$\tcp*[r]{budget exhausted; plan recovered via~\eqref{eq:factored-plan}}
\end{algorithm}

\paragraph{Initialization and termination.} The remaining steps of the template of Section~\ref{sec:c-transform} run on the same factored form. \textsc{Normalize} takes the maximum of $C$ in one pass and rescales $C$. The marginals are floored by adding $\delta_{\mathrm{m}}\mathbf{1}$ and renormalizing, giving the $(\tilde r, \tilde c)$ the iteration consumes; and the initial iterates $(X^0, v^0) = (\tfrac{1}{mn}\mathbf{1}_m\mathbf{1}_n^\top, \mathbf{1}_n)$ map to $\xi = -(\log m)\mathbf{1}_m$, $\zeta = -(\log n)\mathbf{1}_n$ and $\log v = \mathbf{0}_n$ by~\eqref{eq:factored-def}. \textsc{PrimalRes} uses the row marginal error as its proxy and requires one more row $\LSE$ over $C$ (line~\ref{ln:gen-proxy} of Algorithm~\ref{alg:general}), and the full termination check proceeds only if $\|\rho - r\|_2 < \mathrm{tol}$. \textsc{CTransform} is two reads over $C$ that maintain two running minima arrays in $O(m+n)$ workspace, a column-wise one for $\beta'_j = \min_i(C_{ij} - \alpha_i)$ and a row-wise one for $\alpha'_i = \min_j(C_{ij} - \beta'_j)$, with $\alpha = \eta(\xi - \xi^{\mathrm{old}})$. Lastly, the computationally costly term of \textsc{Gap} is computing the primal objective $\langle C, X^k\rangle = \sum_{ij} C_{ij}\exp(\xi_i - kC_{ij}/\eta + \zeta_j)$, which corresponds to one read over $C$ in chunks. A check that passes the proxy therefore reads $C$ four times, the traffic of two iterations, and a check that fails it reads $C$ once. After the stop, still inside the timed solve, the solver evaluates the full primal residual $\|X^k\mathbf{1}_n - r\|_2 + \|(X^k)^\top\mathbf{1}_m - c\|_2$ of the returned iterate for the record, one row and one column $\LSE$ and thus two more reads of $C$.

\paragraph{GPU implementation.} The main loop of Algorithm~\ref{alg:general} is implemented as three CUDA kernels and a counter increment, for a total of four graph nodes, captured once per solve into a CUDA graph and replayed once per iteration. The $\xi$-update of line~\ref{ln:gen-ulse} assigns one thread block of $256$ threads to each row $i$. We store $C$ in the row-major layout where the row is contiguous, so the threads of a warp read adjacent addresses. Each thread strides along the row, forms $-kC_{ij}/\eta + \tilde\beta_j$ in registers from the entry it has just read, with $k$ taken from a device-resident counter, and aggregates it into its own online-$\LSE$ state~\eqref{eq:online-lse}. The $256$ states are aggregated in shared memory by~\eqref{eq:lse-merge}. One thread then computes $\xi_i^{\mathrm{new}} = \log \tilde r_i - (\mu + \log s)$, stores the difference $\log u_i = \xi_i^{\mathrm{new}} - \xi_i$, which the c-transform will need later, and then sets $\xi_i \gets \xi_i^{\mathrm{new}}$. The $\zeta$-update of line~\ref{ln:gen-vlse} runs against the row-major layout, and requires a more careful treatment. Our implementation runs in two stages. In the first stage, each thread owns one column over a chunk of $1024$ rows ($128$ when $m \le 4096$) and keeps one online state; a block's threads take adjacent columns and move down the rows in lockstep, so every read is again coalesced, and the partial states of all chunks are written to two scratch arrays with one entry per chunk and column ($64$~MiB in total at $m = n = 2^{16}$). In the second stage, one thread per column merges its chunk states by~\eqref{eq:lse-merge} into $(\mu, s)$, computes $\zeta_j^{\mathrm{new}} = \log \tilde c_j - (\mu + \log s)$, stores $\log v_j = \zeta_j^{\mathrm{new}} - \zeta_j$, and sets $\zeta_j \gets \zeta_j^{\mathrm{new}}$. The termination checks run as chunked passes over $C$ with PyTorch's native kernels~\cite{paszke2019pytorch}. The problem data $C$, $r$, and $c$ are loaded into device memory once, while the iteration, checks, and delivery execute on the device. At a check, only scalar verdicts return to the host.

\subsection{The Point Cloud Solver}\label{sec:impl-pointcloud}

The Point Cloud solver consumes a ground cost function $\kappa$ on two point clouds $P=(p_1, \dots, p_m)$ and $Q=(q_1, \dots, q_n)$ in $\R^d$, with the (normalized) cost matrix defined as
\[
C_{ij} := \kappa(p_i, q_j) / C_{\max},
\]
where $C_{\max} := \max_{p \in P, q \in Q} \kappa(p, q)$ is the maximum cost between the two clouds. In particular, the cost matrix is given implicitly and compactly, and we take extra care to not materialize the full $m \times n$ cost matrix at any point in the Point Cloud solve.

\paragraph{Representation.} The Point Cloud solver carries the same factored state of Proposition~\ref{prop:factored} as the other two solvers, in the vectors $\xi^k \in \R^m$, $\zeta^k \in \R^n$ and $\log v \in \R^n$. The transport plan is recoverable via~\eqref{eq:factored-plan} upon final delivery, while the term related to the cost matrix $L_{ij} = -\kappa(p_i, q_j)/(C_{\max} \eta)$ is recomputed from the coordinates in registers each time it is needed. This implies that the algorithm state occupies $O(m+n)$ memory, while the overall device memory requirement is $O((m + n)d)$ due to the storage required to store the coordinates of the point clouds.

\begin{algorithm}[t]
  \caption{BDRS with c-transform for DOT in the Point Cloud format}\label{alg:pointcloud}
  \KwIn{Point clouds $P\in\R^{m\times d}$, $Q\in\R^{n\times d}$ with ground cost $\kappa$; marginals $r\in\Delta^{m-1}$, $c\in\Delta^{n-1}$; step size $\eta>0$; marginal floor $\delta_{\mathrm{m}}$; tolerance $\mathrm{tol}$; check period $T_{\mathrm{chk}}$; budget $K_{\max}$}
\KwOut{Factored form of plan $(\xi,\zeta,k)$ and dual-feasible $(\alpha',\beta')$}
$C_{\max}\gets\max_{i,j}\kappa(p_i,q_j)$;\enspace $\tilde r \gets (r + \delta_{\mathrm{m}}\mathbf{1}_m)/(1 + m\delta_{\mathrm{m}})$;\enspace $\tilde c \gets (c + \delta_{\mathrm{m}}\mathbf{1}_n)/(1 + n\delta_{\mathrm{m}})$\tcp*[r]{\textsc{Normalize}; costs enter as $\kappa(p_i,q_j)/C_{\max} \le 1$}
$\xi\gets-(\log m)\mathbf 1_m$;\enspace $\zeta\gets-(\log n)\mathbf 1_n$;\enspace $\log v\gets\mathbf 0_n$\;
\For{$k=1,2,\dots,K_{\max}$}{
  $\tilde\beta\gets \zeta+\log v$;\quad $\xi^{\mathrm{old}}\gets \xi$\;
  $\xi_i\gets\log \tilde r_i-\LSE_j\big({-}k\kappa(p_i,q_j)/(C_{\max}\eta)+\tilde\beta_j\big)$\tcp*[r]{$\xi$-update of~\eqref{eq:factored-iter}}
  $\zeta^{\mathrm{old}}\gets \zeta$;\quad $\zeta_j\gets\log \tilde c_j-\LSE_i\big({-}k\kappa(p_i,q_j)/(C_{\max}\eta)+\xi_i\big)$\tcp*[r]{$\zeta$-update of~\eqref{eq:factored-iter}}
  $\log v\gets \zeta-\zeta^{\mathrm{old}}$\;
  \If{$k\bmod T_{\mathrm{chk}}=0$}{
    $\rho_i\gets\exp\big(\xi_i+\LSE_j({-}k\kappa(p_i,q_j)/(C_{\max}\eta)+\zeta_j)\big)$\tcp*[r]{\textsc{PrimalRes}}
    \If{$\|\rho-r\|_2<\mathrm{tol}$}{
      $\alpha\gets\eta(\xi-\xi^{\mathrm{old}})$;\enspace $\beta'_j\gets\min_i\big(\kappa(p_i,q_j)/C_{\max}-\alpha_i\big)$;\enspace $\alpha'_i\gets\min_j\big(\kappa(p_i,q_j)/C_{\max}-\beta'_j\big)$\tcp*[r]{\textsc{CTransform}}
      $\texttt{obj}\gets\textstyle\sum_{ij}\frac{\kappa(p_i,q_j)}{C_{\max}}\exp\big(\xi_i-k\kappa(p_i,q_j)/(C_{\max}\eta)+\zeta_j\big)$\tcp*[r]{primal objective $\langle C,X^k\rangle$}
      \If{$|\texttt{obj}-\langle\alpha',r\rangle-\langle\beta',c\rangle|<\mathrm{tol}$}{
        \Return $(\xi, \zeta, k, \alpha', \beta')$\tcp*[r]{factored representation of $X$, recoverable via~\eqref{eq:factored-plan}}
      }
    }
  }
}
$\alpha\gets\eta(\xi-\xi^{\mathrm{old}})$; $\beta'_j\gets\min_i\big(\kappa(p_i,q_j)/C_{\max}-\alpha_i\big)$; $\alpha'_i\gets\min_j\big(\kappa(p_i,q_j)/C_{\max}-\beta'_j\big)$\tcp*[r]{feasible duals for the returned iterate}
\Return $(\xi, \zeta, k, \alpha', \beta')$\tcp*[r]{budget exhausted, return final iterate}
\end{algorithm}

\paragraph{Initialization and termination.} In the Point Cloud format, \textsc{Normalize} is a one-time pairwise maximum $C_{\max}$ defined above, computable in $O(mnd)$ FLOPs. The marginal distribution flooring procedure is identical to the one in the General solver. The initial iterates $(X^0, v^0)$ map to the factored state as in the General solver, in $O(m+n)$. \textsc{PrimalRes}, for which we use the row marginal error as a proxy, requires a row-$\LSE$ in $O(mnd)$ FLOPs. Next, with $\alpha^k = \eta \log u^k = \eta(\xi^k - \xi^{k-1})$, the \textsc{CTransform} simply sweeps $\beta'_j=\min_i(C_{ij}-\alpha^k_i)$ and $\alpha'_i=\min_j(C_{ij}-\beta'_j)$ while recomputing $C_{ij}$ from coordinates, in $O(mnd)$ FLOPs using $O(m+n)$ workspace. Lastly, the primal objective, which is the compute-intensive part of \textsc{Gap}, is computed via exponentiating and aggregating $\xi_i - k\kappa(p_i, q_j)/(C_{\max} \eta) + \zeta_j$ across all pairs of points while weighting them with the cost $\kappa(p_i, q_j) /C_{\max}$, again recomputed from the coordinates in $O(mnd)$ FLOPs. A check that passes the proxy therefore costs four $O(mnd)$ passes, the arithmetic of two iterations, while a check that fails the proxy costs one.

\paragraph{GPU implementation.} Algorithm~\ref{alg:pointcloud} is a description of our Point Cloud solver, which implements the template of Section~\ref{sec:c-transform} for the Point Cloud format. Both row and column $\LSE$s in~\eqref{eq:factored-iter} are aggregations of $mn$ terms, and their respective kernels assign one thread block per output entry. For the $\xi$-update, the block of row $i$ loads the point $p_i$ once, and its $256$ threads iterate through the $n$ opposing points. Each thread recomputes $\kappa(p_i, q_j)$ from the coordinates in registers, forms the term $-k\kappa(p_i, q_j)/(C_{\max}\eta) + \tilde\beta_j$, and updates its private online-$\LSE$ state~\eqref{eq:online-lse}. The $256$ states in the thread block are then merged in shared memory by~\eqref{eq:lse-merge}, and the $\zeta$-update kernel is constructed similarly, over columns. Since both updates stream the opposing cloud as one contiguous array of points, there is no layout asymmetry and hence no special treatment is needed for the column $\LSE$, unlike its General counterpart. The point cloud coordinates are staged from the host onto the device once, and at termination checks only scalar verdicts return to the host. As a result, this traffic does not limit the wall-clock time. An iteration launches three kernels, the $\xi$-update, the $\zeta$-update, and a counter advance that increments the device-side iteration count $k$; the three are captured once per solve as a CUDA graph and replayed each iteration.

\begin{rema}\label{rem:pc-roofline}
In the Point Cloud solver, the arithmetic is $O(mnd)$ FLOPs per update against $O((m+n)d)$ bytes moved, making the arithmetic intensity of Point Cloud kernels grow like $O(mn/(m+n))$ with problem size and the kernels compute bound in the roofline model even on small DOT instances. In the accounting of Section~\ref{sec:impl-factored}, the cost entries are recomputed rather than read. Evaluating $\kappa(p_i, q_j)$ from the coordinates takes about $3d$ operations and replaces the read. The kernel then divides it by $C_{\max}$, as a multiply by the precomputed reciprocal $1/C_{\max}$ followed by two fused multiply-adds that restore the correctly rounded quotient, and multiplies the result by $-k/\eta$, while the addition of the vector term and the three operations of the $\LSE$ update are as before, for a total of about $3d+8$ per entry per pass, or $(6d+16)N^2$ FLOPs per iteration with $m = n = N$. Note that the opposing point cloud occupies $8Nd$ bytes, which is at most a few megabytes for the largest problems in our benchmark ($1.6$~MB at $N = 2^{16}$ and $d = 3$), so it fits in the $40$~MB L2 cache~\cite{choquette2021a100}\footnote{The size of the L2 cache may vary between devices, but the point cloud coordinates should generally fit in the L2 cache of common server GPUs.} and we assume that the repeated reads of it and of the vector term ($8N$ bytes) by the thread blocks are served from L2 rather than HBM. The $\xi$-update reads the two cloud coordinates, $\tilde\beta$, $\log\tilde r$ and the old $\xi$ and writes $\xi$ and $\log u$, while the $\zeta$-update reads the two cloud coordinates, $\xi$, $\log\tilde c$ and the old $\zeta$ and writes $\zeta$, $\log v$ and the next $\tilde\beta$, for a total of $8N(4d+11)$ bytes per iteration. The estimated arithmetic intensity is therefore about $0.18N$ FLOP per byte at $d \in \{2, 3\}$, crossing the A100-SXM4-80GB ridge at $N \approx 26$ and growing linearly in $N$. Note that the count treats the exponential as one operation, while on the device it typically consists of a sequence of basic arithmetic operations, so the true arithmetic intensity is higher still. In either case, the roofline model predicts that our Point Cloud kernels are compute bound.
\end{rema}

\subsection{The Grid Solver}\label{sec:impl-grid}

\begin{algorithm}[t]
\caption{BDRS with c-transform for DOT in the Grid format}\label{alg:grid}
\KwIn{axis costs $\varphi_{\mathrm{r}}, \varphi_{\mathrm{c}}$; marginals $r,c\in\Delta^{N-1}$ on an $h \times w$ grid; step size $\eta>0$; marginal floor $\delta_{\mathrm{m}}$; tolerance $\mathrm{tol}$; check period $T_{\mathrm{chk}}$; budget $K_{\max}$}
\KwOut{Factored form of plan $(\xi,\zeta,k)$ and dual-feasible $(\alpha',\beta')$}
$C_{\max} \gets \max_{\delta_r} \varphi_{\mathrm{r}}(\delta_r) + \max_{\delta_c} \varphi_{\mathrm{c}}(\delta_c)$\;
$\ell_{\mathrm{r}}(\delta)\gets-\varphi_{\mathrm{r}}(\delta)/(C_{\max}\eta)$;\quad $\ell_{\mathrm{c}}(\delta)\gets-\varphi_{\mathrm{c}}(\delta)/(C_{\max}\eta)$\tcp*[r]{axis tables}
$\tilde r \gets (r + \delta_{\mathrm{m}}\mathbf{1}_N)/(1 + N\delta_{\mathrm{m}})$;\enspace $\tilde c \gets (c + \delta_{\mathrm{m}}\mathbf{1}_N)/(1 + N\delta_{\mathrm{m}})$\tcp*[r]{\textsc{Normalize}: marginal floor}
$\xi\gets-(\log N)\mathbf 1_N$;\enspace $\zeta\gets-(\log N)\mathbf 1_N$;\enspace $\log v\gets\mathbf 0_N$\;
\For{$k=1,2,\dots,K_{\max}$}{
  $\tilde\beta\gets \zeta+\log v$;\quad $\xi^{\mathrm{old}}\gets \xi$\;
  $\log S_{i_r,j_c}\gets\LSE_{j_r}\big(k\ell_{\mathrm{r}}(|i_r-j_r|)+\tilde\beta_{j_r,j_c}\big)$\tcp*[r]{$\xi$-update inner pass}
  $\xi_{i_r,i_c}\gets\log \tilde r_{i_r,i_c}-\LSE_{j_c}\big(k\ell_{\mathrm{c}}(|i_c-j_c|)+\log S_{i_r,j_c}\big)$\tcp*[r]{outer pass}
  $\log T_{j_r,i_c}\gets\LSE_{i_r}\big(k\ell_{\mathrm{r}}(|i_r-j_r|)+\xi_{i_r,i_c}\big)$\tcp*[r]{$\zeta$-update inner pass}
  $\zeta^{\mathrm{old}}\gets \zeta$;\quad $\zeta_{j_r,j_c}\gets\log \tilde c_{j_r,j_c}-\LSE_{i_c}\big(k\ell_{\mathrm{c}}(|i_c-j_c|)+\log T_{j_r,i_c}\big)$\tcp*[r]{outer pass}
  $\log v\gets \zeta-\zeta^{\mathrm{old}}$\;
  \If{$k\bmod T_{\mathrm{chk}}=0$}{
    $\log S_{i_r,j_c}\gets\LSE_{j_r}\big(k\ell_{\mathrm{r}}(|i_r-j_r|)+\zeta_{j_r,j_c}\big)$\tcp*[r]{\textsc{PrimalRes} inner pass}
    $\rho_{i_r,i_c}\gets\exp\big(\xi_{i_r,i_c}+\LSE_{j_c}(k\ell_{\mathrm{c}}(|i_c-j_c|)+\log S_{i_r,j_c})\big)$\tcp*[r]{outer pass}
    \If{$\|\rho-r\|_2<\mathrm{tol}$}{
      $\alpha\gets\eta(\xi-\xi^{\mathrm{old}})$\tcp*[r]{native dual $\alpha^k=\eta\log u^k$}
      $\tau_{j_r,i_c}\gets\min_{i_r}\big(\varphi_{\mathrm{r}}(|i_r-j_r|)/C_{\max}-\alpha_{i_r,i_c}\big)$\tcp*[r]{\textsc{CTransform} $\beta'$ inner pass} \label{ln:sep-ct-inner}
      $\beta'_{j_r,j_c}\gets\min_{i_c}\big(\varphi_{\mathrm{c}}(|i_c-j_c|)/C_{\max}+\tau_{j_r,i_c}\big)$\tcp*[r]{$\beta'$ outer pass} \label{ln:sep-ct-outer}
      $\tau'_{i_r,j_c}\gets\min_{j_r}\big(\varphi_{\mathrm{r}}(|i_r-j_r|)/C_{\max}-\beta'_{j_r,j_c}\big)$\tcp*[r]{$\alpha'$ inner pass}
      $\alpha'_{i_r,i_c}\gets\min_{j_c}\big(\varphi_{\mathrm{c}}(|i_c-j_c|)/C_{\max}+\tau'_{i_r,j_c}\big)$\tcp*[r]{$\alpha'$ outer pass}
      $\log S'_{i_c,j_r}\gets\LSE_{j_c}\big(k\ell_{\mathrm{c}}(|i_c-j_c|)+\zeta_{j_r,j_c}\big)$\tcp*[r]{\texttt{obj} inner pass}
      $R^{\mathrm{r}}_{i_r,j_r}\gets\exp\big(k\ell_{\mathrm{r}}(|i_r-j_r|)+\LSE_{i_c}(\xi_{i_r,i_c}+\log S'_{i_c,j_r})\big)$\tcp*[r]{row-axis marginal of $X^k$}
      $R^{\mathrm{c}}_{i_c,j_c}\gets\exp\big(k\ell_{\mathrm{c}}(|i_c-j_c|)+\LSE_{i_r}(\xi_{i_r,i_c}+\log S_{i_r,j_c})\big)$\tcp*[r]{column-axis marginal; reuses $\log S$}
      $\texttt{obj}\gets\sum_{i_r,j_r}\varphi_{\mathrm{r}}(|i_r-j_r|)R^{\mathrm{r}}_{i_r,j_r}/C_{\max}+\sum_{i_c,j_c}\varphi_{\mathrm{c}}(|i_c-j_c|)R^{\mathrm{c}}_{i_c,j_c}/C_{\max}$\;
      \lIf{$|\texttt{obj}-\langle\alpha',r\rangle-\langle\beta',c\rangle|<\mathrm{tol}$}{\Return $(\xi, \zeta, k, \alpha', \beta')$}
    }
  }
}
$\alpha\gets\eta(\xi-\xi^{\mathrm{old}})$;\enspace $\beta'\gets$ separable c-transform of $\alpha$, see lines~\ref{ln:sep-ct-inner} and~\ref{ln:sep-ct-outer};\enspace $\alpha'\gets$ likewise of $\beta'$\;
\Return $(\xi, \zeta, k, \alpha', \beta')$\tcp*[r]{budget exhausted, return final iterate}
\end{algorithm}

The Grid solver consumes the problem~\eqref{eq:primal-problem} in the Grid format of Definition~\ref{def:grid}, and is best understood as the Point Cloud solver under two further assumptions.
\begin{enumerate}
  \item The two point clouds coincide with the same $h \times w$ grid, so that $m = n = hw =: N$, and a point $p_i$ (resp.\ $q_j$) is identified by its row and column indices $(i_r, i_c)$ (resp.\ $(j_r, j_c)$).
  \item The ground cost is \emph{separable} with translation-invariant axis costs, as in Definition~\ref{def:grid}. In other words, $\kappa(p_i, q_j)$ can be written as $\varphi_{\mathrm{r}}(|i_r-j_r|)+\varphi_{\mathrm{c}}(|i_c-j_c|)$. For example, both $\kappa_1(p_i, q_j) := \ell_1(p_i-q_j)$ and $\kappa_2(p_i, q_j) := \ell_2^2(p_i - q_j)$ are separable, with $\varphi_{\mathrm{r}, 1}(\delta) = \varphi_{\mathrm{c}, 1}(\delta) = \delta$ and $\varphi_{\mathrm{r}, 2}(\delta) = \varphi_{\mathrm{c}, 2}(\delta) = \delta^2$, respectively.
\end{enumerate}

\paragraph{Representation.} The Grid solver adopts the same factored form of Proposition~\ref{prop:factored} and stores the algorithm state in the vectors $\xi^k,\zeta^k, \log v \in \R^N$. The main difference in representation lies in how we obtain the entries of $L$, the term related to the cost matrix $C$. In particular, from the definition of $L$ (in Proposition~\ref{prop:factored}),
\begin{align*}
  L_{ij} &= - C_{ij} / \eta \\
  &= -\kappa(p_i, q_j) / (\eta C_{\max}) \\
  &= -(\varphi_{\mathrm{r}}(|i_r-j_r|)+\varphi_{\mathrm{c}}(|i_c-j_c|)) / (\eta C_{\max}) \\
  &= \ell_{\mathrm{r}}(|i_r - j_r|) + \ell_{\mathrm{c}}(|i_c - j_c|),
\end{align*}
where $\ell_{\mathrm{r}}: \{0, 1, \dots, h-1\} \to \mathbb{R}_{\leq 0}$ is defined as
\[
\ell_{\mathrm{r}} (\delta) := -\frac{1}{\eta C_{\max}}\varphi_{\mathrm{r}}(\delta),
\]
and $\ell_{\mathrm{c}}$ is defined similarly on $\{0, 1, \dots, w-1\}$. Therefore, instead of recomputing $L_{ij} = -\kappa(p_i, q_j)/(\eta C_{\max})$ from the coordinates as seen in the Point Cloud case, we can alternatively pre-compute and store $\ell_{\mathrm{r}}$ and $\ell_{\mathrm{c}}$ as two one-dimensional tables in device memory.
Moreover, the normalizer is
\begin{equation} \label{eq:grid-cmax}
C_{\max} = \max_{\delta_r} \varphi_{\mathrm{r}}(\delta_r) + \max_{\delta_c} \varphi_{\mathrm{c}}(\delta_c),
\end{equation}
which, for nondecreasing axis costs (e.g., $\ell_1$ and $\ell_2^2$ costs), reduces to the corner-to-corner displacement $\varphi_{\mathrm{r}}(h-1) + \varphi_{\mathrm{c}}(w-1)$. As a result, the device memory requirement is $O(N)$. The termination checks evaluate their axis-reduction intermediate arrays in blocks of rows, so they also stay within $O(N)$ memory. As in the Point Cloud solver, the plan is delivered in the factored form $(\xi, \zeta, k)$ at termination and recovered via~\eqref{eq:factored-plan} if needed.

The iteration evaluates the recursions~\eqref{eq:factored-iter} on this representation through a separable factorization of each $\LSE$ along the two axes of the grid. In particular, for the $\xi$-update, we aggregate across the target points $j = (j_r, j_c) \in [h] \times [w]$:
\begin{equation}\label{eq:sep-lse}
  \LSE_j\big(kL_{ij}+\tilde\beta_j\big)=\LSE_{j_c}\Big(k\ell_{\mathrm{c}}(|i_c-j_c|)+\underbrace{\LSE_{j_r}\big(k\ell_{\mathrm{r}}(|i_r-j_r|)+\tilde\beta_{j_r,j_c}\big)}_{\log S_{i_r,j_c}}\Big),
\end{equation}
which is an inner pass over rows, producing the intermediate table $\log S$, and an outer pass over columns, and likewise for the $\zeta$-update. The factorization of the $\LSE$ operation comes from the fact that the sum over $j$ is really the double sum over $(j_r, j_c)$, where the column terms are constant with respect to $j_r$. Each update is $2N$ one-dimensional $\LSE$s, so an iteration costs $O(N(h+w))$ FLOPs; on a square grid ($h = w = n_g$) this is $O(n_g^3) = O(N^{3/2})$, a factor $\sqrt N$ below the Point Cloud and General solvers, and the source of the Grid solver's wall-clock speed.

\paragraph{Initialization and termination.} In the Grid solver, each initialization and termination check operation costs no more than $O(N(h+w))$ FLOPs. \textsc{Normalize} can be performed in $O(h + w)$, and in $O(1)$ for nondecreasing axis costs. Building the axis tables $\ell_{\mathrm{r}}, \ell_{\mathrm{c}}$ is $O(h+w)$; the marginal flooring is identical to that of the previous two formats, in $O(N)$. The initial iterates $(X^0, v^0)$ map to $\xi = \zeta = -(\log N) \mathbf{1}_N$ and $\log v = \mathbf{0}_N$, as seen in the Point Cloud solver, in $O(N)$. \textsc{PrimalRes} evaluates the row marginals by reusing~\eqref{eq:sep-lse}. In particular, by~\eqref{eq:factored-plan},
\[
\sum_j X^k_{ij} = \exp\big(\xi^k_i + \LSE_j(kL_{ij} + \zeta^k_j)\big),
\]
which is exactly~\eqref{eq:sep-lse} with $\tilde \beta$ replaced by $\zeta$. \textsc{CTransform} factors along the two axes exactly as~\eqref{eq:sep-lse} does. The c-transform $\beta'_j=\min_i(C_{ij}-\alpha_i)$ over the source points $i = (i_r, i_c) \in [h] \times [w]$ becomes the double minimum over $(i_r, i_c)$, in which the column term $\varphi_{\mathrm{c}}(|i_c-j_c|)/C_{\max}$ is constant with respect to $i_r$ and moves outside the inner minimization:
\begin{equation*}
  \beta'_{j_r,j_c}=\min_{i_c}\Big(\varphi_{\mathrm{c}}(|i_c-j_c|)/C_{\max}+\min_{i_r}\big(\varphi_{\mathrm{r}}(|i_r-j_r|)/C_{\max}-\alpha_{i_r,i_c}\big)\Big).
\end{equation*}
This is two nested minimizations over the two axes, computable in $O(N(h+w))$ FLOPs. A similar argument can be made for the c-transform in the other direction, to obtain $\alpha'$. The primal objective, the dominant term of \textsc{Gap}, factors along the two axes as well:
\begin{align*}
  \langle C,X^k\rangle &=\frac{1}{C_{\max}}\sum_{ij} \varphi_{\mathrm{r}}(|i_r-j_r|)X^k_{ij}+\frac{1}{C_{\max}}\sum_{ij} \varphi_{\mathrm{c}}(|i_c-j_c|)X^k_{ij} \\
  &= \frac{1}{C_{\max}}\sum_{i_r,j_r} \varphi_{\mathrm{r}}(|i_r-j_r|) \underbrace{\sum_{i_c, j_c} X_{ij}^k}_{=:R^{\mathrm{r}}_{i_r,j_r}}+\frac{1}{C_{\max}}\sum_{i_c,j_c} \varphi_{\mathrm{c}}(|i_c-j_c|)\underbrace{\sum_{i_r, j_r} X_{ij}^k}_{=:R^{\mathrm{c}}_{i_c, j_c}},
\end{align*}
since each cost $\varphi$ is constant with respect to the axis along which we are aggregating. Substituting the factored plan~\eqref{eq:factored-plan} with $L_{ij}=\ell_{\mathrm{r}}(|i_r-j_r|)+\ell_{\mathrm{c}}(|i_c-j_c|)$,
\begin{align*}
R^{\mathrm{r}}_{i_r,j_r} &= \sum_{i_c}\sum_{j_c} \exp\big(\xi_{i_r,i_c} + k\ell_{\mathrm{r}}(|i_r-j_r|) + k\ell_{\mathrm{c}}(|i_c-j_c|) + \zeta_{j_r,j_c}\big) \\
&= \exp\Big(k\ell_{\mathrm{r}}(|i_r-j_r|) + \LSE_{i_c}\big(\xi_{i_r,i_c} + \LSE_{j_c}\big(k\ell_{\mathrm{c}}(|i_c-j_c|)+\zeta_{j_r,j_c}\big)\big)\Big).
\end{align*}
A similar scheme can be derived for $R^{\mathrm{c}}$. In total, the primal objective costs three new axis-length passes, in $O(N(h+w))$ FLOPs. A check that passes the proxy therefore costs nine axis-length passes, which is a little over two iterations, and one that fails the proxy costs the same as about half of an iteration.

\paragraph{GPU implementation.} Algorithm~\ref{alg:grid} is a description of our Grid solver, which implements the template of Section~\ref{sec:c-transform} for the Grid format. Our kernels specialize to the square grids of the benchmark dataset, $h = w = n_g$ with $n_g$ a power of two between $4$ and $1024$ and $\varphi_{\mathrm{r}} = \varphi_{\mathrm{c}} \in \{\delta, \delta^2\}$, collapsing the two axis tables to a single table $\ell$; for $h \ne w$ or $\varphi_{\mathrm{r}} \ne \varphi_{\mathrm{c}}$ the released code contains only a PyTorch reference of the iteration, without the termination checks of Algorithm~\ref{alg:grid}. Each update is one fused kernel with a thread block per grid row. For the $\xi$-update, the thread block of row $i_r$ owns the $i_r$-th row of the intermediate table $\log S$, which is computed once and reused by all $n_g$ outputs of the row. The block stages the table $k\ell$ and this intermediate row in shared memory, allowing fast access during both $\LSE$ passes. The $\zeta$-update kernel is implemented similarly. Within the block, each of the $n_g$ outputs is computed by $W$ threads that cooperate on its $\LSE$ operation ($W = 32, 16, 8$ at $n_g = 32, 64, 128$, so that $n_g W = 1024$ fills the block; from $n_g=256$ onwards, we set $W=1$ and the block is $n_g$ threads), each thread updating its own private online-$\LSE$ state~\eqref{eq:online-lse}. Finally, the per-thread states are merged by~\eqref{eq:lse-merge} in shared memory. An iteration launches three kernels, the $\xi$-update, the $\zeta$-update, and a counter advance that also rewrites the table $k\ell$, which are all captured once per solve as a CUDA graph and replayed.

\begin{rema}\label{rem:grid-roofline}
In the accounting of Section~\ref{sec:impl-factored}, an update of the Grid solver evaluates $2n_g^3$ summands. The intermediate table $\log S$ has $N = n_g^2$ entries, each an $\LSE$ over $n_g$ terms, and the $N$ outputs are each an $\LSE$ over $n_g$ terms. Each summand costs one addition to combine the table entry $k\ell$ with the vector or intermediate term, and three more for the $\LSE$ update~\eqref{eq:online-lse} for a total of four operations. That is $8n_g^3$ operations per update, and in total, $16n_g^3$ FLOPs per iteration. On the memory side, the $\xi$-update reads $\tilde\beta$, $\log\tilde r$ and the old $\xi$ and writes $\xi$ and $\log u$; the $\zeta$-update reads $\xi$, $\log\tilde c$ and the old $\zeta$ and writes $\zeta$, $\log v$ and the next $\tilde\beta$. This amounts to eleven passes over $N = n_g^2$ elements, or $88n_g^2$ bytes per iteration, counting each vector once. In the first pass of each update, every one of the $n_g$ thread blocks reads the whole input vector ($\tilde\beta$ in the $\xi$-update, $\xi$ in the $\zeta$-update), which adds loads of $16n_g^3$ bytes per iteration. One such vector occupies $8n_g^2$ bytes, which fits in the $40$~MB L2 cache for $n_g \le 2048$, and we assume that these repeated reads are served from L2, so that the HBM traffic is the eleven passes. The arithmetic intensity is therefore at least $16n_g^3/(88n_g^2) \approx 0.2n_g$ FLOP per byte, above the $4.8$ ridge from $n_g = 32$ onwards.

At smaller scales, the absolute scale of the arithmetic is small enough that its runtime on our device is comparable to that of a kernel launch. Concretely, at $n_g = 64$, an iteration costs about $4 \times 10^{6}$ FLOPs, which is under a microsecond at the device's peak rate~\cite{nvidia2021a100datasheet} and at the scale of a kernel launch. So, up to $n_g \approx 64$, the iteration cannot fill the device and launch latency governs. This makes the CUDA graph an essential part of the Grid solver for small-scale instances. For larger instances, the model predicts that the compute time overshadows the launch times, so that the kernels are no longer bound by launch latency or by HBM bandwidth.
\end{rema}

\section{Numerical Experiments}\label{sec:experiments}

In this section, we conduct numerical experiments to
\begin{enumerate}
  \item quantify the difference in computational speed between the three BDRS solvers, across the three formats;
  \item compare the three BDRS solvers against the GPU-accelerated baselines in their respective formats;
  \item quantify how much the c-transform post-processing reduces the number of iterations required to satisfy the stopping criterion for the Douglas--Rachford family of solvers, including ours and two baselines, and
  \item measure how the iteration count of BDRS responds when the step size $\eta$ is perturbed around its deployed value.
\end{enumerate}

The software versions used in this section, including the releases of POT and OTT-JAX and the upstream commit of each other baseline, are listed in the README of our code release.

\subsection{Experiment setup} \label{sec:experiment-setup}
\paragraph{Datasets.} Three datasets are used in this section, one for each DOT format. Note that for a dataset of DOT instances with more structure (e.g. Grid), we can \emph{lift} it into its less structured counterpart by writing out the cost function $\kappa$ (Point Cloud) or computing and storing the explicit $C$ (General).

\textbf{DOTmark}~\cite{schrieber2016dotmark} natively supplies Grid and Point Cloud instances, some of which are further lifted to General instances. It contains ten distinct classes of images, each class holding ten grayscale square images. The dataset supplies these images at various resolutions $n_g \in \{32, 64, 128, 256, 512\}$, so that each image can be represented as an $n_g \times n_g$ matrix of nonnegative values. Similar to~\cite{xia2026halo}, we derive an $n_g = 1024$ tier by corner-anchored bilinear upsampling of the supplied $n_g = 512$ images, rounded to integers before normalization. Next, within a class, a Grid DOT instance can be constructed for each pair of images of equal resolution, resulting in $\binom{10}{2}=45$ distinct DOT instances for each class and $450$ for each resolution and cost. The benchmark of Sections~\ref{sec:exp-formats} and~\ref{sec:exp-baselines} uses all of them; the studies of Section~\ref{sec:exp-ablation} use fixed subsets of five pairs per class (three for the $T_{\mathrm{chk}}$ sweep). To construct the DOTmark instances in Grid or Point Cloud format, we first normalize the pixel values of both images to a probability distribution over the $n_g \times n_g$ grid, so that $N = n_g^2$. Then, to obtain Grid (resp. Point Cloud) instances, we select either the $\ell_1$ or $\ell_2^2$ (resp. the $\ell_\infty$ or the unsquared $\ell_2$) cost as the cost between grid coordinates. We lift the Grid DOTmark instances into Point Cloud DOTmark instances, and then lift all Point Cloud DOTmark instances into General DOTmark instances, so that benchmarking experiments can be run on any (cost, format) pair.

\textbf{ModelNet10}~\cite{wu2015shapenets} natively supplies the Point Cloud instances. It is a library of computer-aided-design (CAD) 3-dimensional models of common indoor objects, distributed as triangle meshes and grouped into ten categories. From each category we take the first three shapes of its test split and pair them within the category, so that the two clouds of an instance are always distinct objects of the same kind. This gives $\binom{3}{2} \cdot 10 = 30$ pairs of objects across the dataset. Each CAD mesh is sampled area-weighted into a surface point cloud, which is then scaled isotropically into $[0,1]^3$. Every instance puts equal mass on its points, giving the uniform distribution for both marginals $r = c = \tfrac{1}{N}\mathbf{1}_N$. For each object, $N_{\max}=2^{19}$ samples are drawn \emph{once} throughout our experiments. Then, we randomly fix an ordering of these samples so that taking the first $N \le N_{\max}$ points always gives a nested, reproducible subsample that is individually unbiased. Moreover, we select the squared Euclidean distance as the cost between the two point clouds.

Lastly, we use \textbf{random dense costs} as a way to generate natively General format DOT instances. In particular, each entry of the cost matrix $C_{ij}$ is sampled independently from either the uniform distribution on $[0,1]$ or the half-normal distribution $|\mathcal{N}(0,1)|$, with marginals drawn from the flat Dirichlet distribution $\mathrm{Dir}(1, \dots, 1)$. Cost matrices drawn entrywise at random are standard in both studies regarding DOT problems~\cite{schwinn2019heuristics} and related assignment problems~\cite{dellamico2000assignment}. In our experiments, we use this procedure to generate ten instances, each with its own pair of marginals, for each size $N \in \{2^{10}, 2^{12}, 2^{14}, 2^{16}\}$ and each of the two cost families, eighty in total. Cost matrices generated under this scheme have no underlying geometric structure, so no Point Cloud or Grid method applies to them, and they are only presented to demonstrate the efficacy of General format solvers.

\paragraph{Protocol.} Every solve receives one A100-SXM4-80GB with its full $80$~GB of device memory, $16$ CPU cores with $93$~GB of host memory, and a wall-clock limit of $1800$~seconds; a solve that exceeds any of the three is recorded as such and considered a failure. Every solver, including ours and baselines, runs to its \emph{own native} stopping rule at its default settings, except for the following:
\begin{enumerate}
    \item DROT, PDOT and cuPDLPx receive the iteration cap of $10^6$,
    \item for fairness, our driver of DROT tests its residual every $100$ iterations rather than at every iteration and, since DROT's solver routine sets no default tolerance or step size, uses the tolerance $10^{-4}$ and step size $\sigma = 2/(m+n)$ that the implementation's example program passes, and
    \item OTT's convolutional Sinkhorn runs in its scaling mode rather than its default log-domain mode (Appendix~\ref{app:duals}).
\end{enumerate}
Before the timed solve, every solver except DROT runs an untimed warm-up on a dummy instance that shares no data with the benchmark instance, so that compilation and first-call allocation are not timed. DROT shipped no warm-up routine. Faithful to its original implementation, PDOT additionally runs an untimed $100$-iteration warm-up on the instance itself before each measured solve; the result of this warm-up is discarded, and the measured solve starts from a freshly initialized solver state. From each solver's delivered state, we recover a representation of the primal--dual triple $(X,\alpha,\beta)$ for the original problem~\eqref{eq:primal-problem} and score it by the KKT residual~\eqref{eq:kkt}. Note that not every baseline implementation returns a pair of dual potentials $(\alpha, \beta)$, so we derived mappings from each baseline's returned state to its best estimate of the dual variables, i.e. solutions to~\eqref{eq:dual-problem}, in Appendix~\ref{app:duals}. The comparison is therefore one of the recovered primal--dual pairs under these recovery rules, not of the primal plans alone. Furthermore, we measure two timings for each solve: $t_{\mathrm{solve}}$ times the solver call alone and stops when the solver returns at its native stopping criterion; $t_{\mathrm{cert}}$ adds to it the post-solve recovery of $(X, \alpha, \beta)$, which could involve the retrieval of the transport plan (HOT) or the c-transform that gives LAMP's $\alpha$, and a potentially expensive computation of the dual residual (HALO, DROT, M3S, and HOT). For our solvers, which terminate with a certificate of their own, we always have $t_{\mathrm{solve}}=t_{\mathrm{cert}}$. In Subsections~\ref{sec:exp-formats} and~\ref{sec:exp-baselines}, we present the comparison across solvers as an accuracy--time Pareto chart over the \emph{co-solved} set of DOT instances to ensure that no solver is evaluated on a harder subset of problems than another. The co-solved set of a cell (one choice of dataset, size and cost) holds the instances that every solver drawn in the figure has solved, and the point of each tolerance aggregates that tolerance's solves within the set. A solve counts as solved (under the solver's native stop), or \emph{scored}, when it ends within the limits of this protocol with finite $r_P$, $r_D$ and $g$, whatever its KKT residual. A point that aggregates fewer instances than its cell's co-solved set is drawn hollow, and a dominance statement involving such a point is checked on the instances that both solvers return. Table~\ref{tab:outcomes} in Appendix~\ref{app:outcomes} gives, for every solver and DOTmark resolution, the share of attempted solves that are scored and the share that reach a KKT residual of at most $10^{-4}$. Each solver contributes a single pair $(t_\mathrm{cert}, \KKT)$, with its $t_{\mathrm{solve}}$ marked by a thin line with a tick, when applicable. We occasionally score our solver under various values of input tolerances to map out a \emph{Pareto frontier}. Each point aggregates its $s$ scored solves, one per instance, by the shifted geometric mean $\mathrm{SGM10} := \exp\bigl(\frac{1}{s}\sum_{i=1}^{s} \log(t_i + 10)\bigr) - 10$ of their times $t_i$ in seconds and by the geometric mean (GM) of their KKT residuals, all of which are positive and finite; a solve that is not scored enters neither mean. A solve is killed if it has not finished $30$ seconds after the time limit, a margin for its start-up, warm-up and scoring; a killed solve, and any solve whose $t_{\mathrm{solve}}$ exceeds the time limit, is counted as a timeout. PDOT and cuPDLPx also receive the time limit as a parameter, and a solve that ends on it is likewise counted as a timeout. LAMP runs its own stopping test with its tolerance set to the benchmark's $10^{-4}$ and its native time limit of $600$ seconds; a solve that ends on that limit is LAMP's own exit, not a failure, and is scored like any other.

\paragraph{DOT instances.} DOT instances $(C, r, c)$ across all three formats are presented in normalized form, where $\max (C)= 1$, and every solver of the same format receives the same data, except for the following:
\begin{enumerate}
    \item M3S and the grid mode of HALO build the squared Euclidean cost of the grid themselves, at their own scales, so we rescale their objectives and potentials to the normalized cost before scoring, and
    \item DROT and HALO receive their inputs rounded to single precision, while every other solver runs in double precision. Every delivered state is scored in double precision.
\end{enumerate}
Our solvers' \textsc{Normalize} steps are no-ops on benchmark input, except where rounding leaves $\max(C) = 1 - 2^{-53}$ (the unsquared $\ell_2$ cost at $n_g \le 64$ and two half-normal instances of Fig.~\ref{fig:exp21p}), but are still performed in every solve to simulate a real use case. The marginal flooring procedure is internal to our three solvers and never applied to the marginals any KKT residual is evaluated against. In addition, the marginals of M3S are floored at $10^{-12}$ because it aborts on zero entries, and those of LAMP, through its own rule $r \mapsto (r + 10^{-6}\mathbf{1}_m)/\|r + 10^{-6}\mathbf{1}_m\|_1$.

\paragraph{Configuration of our solvers.} All three of our solvers take two hyperparameters, the step size $\eta$ and the check period $T_{\mathrm{chk}}$. In our benchmarks, both are set by a fixed rule computed before the solve and held constant throughout it. The step size is determined by the rule $\eta = 0.7 \times \overline{C}$, where $\overline{C}$ denotes the mean entry of the cost matrix. The check period is a table on problem size, with $T_{\mathrm{chk}} = 100$, $200$, $500$, $1000$, $2000$ and $4000$ at $n_g = 32$, $64$, $128$, $256$, $512$ and $1024$, respectively. For every other instance, $T_{\mathrm{chk}}$ is read from the same table at the equivalent resolution $n_g = 2^{\lceil \log_2(N)/2 \rceil}$, clamped to the table's range. Every benchmark solve uses the iteration budget $K_{\max} = 10^{6}$.

\paragraph{Baselines by format, and scaling limits.} As discussed in Section~\ref{sec:intro}, there is a wide spectrum of GPU-accelerated solvers for the DOT problem. Table~\ref{tab:ceilings} lists the nine solvers we have selected, as well as their respective DOT format, to be compared to the corresponding implementation of BDRS. LAMP runs in two formats, General and Point Cloud, but is counted as one baseline. Further, we provide the largest DOTmark resolution at which each listed solver, under the stated protocol and its native stopping rule, returns a solution with a finite KKT residual on more than $75\%$ of the instances it attempts, whatever that residual (for our solvers at $\mathrm{tol} = 10^{-4}$, and for the convolutional Sinkhorn at its best $\varepsilon$). DOTmark resolution is the natural ruler for this because the dataset is Grid-native and lifts to all three formats, so that we can put every solver on the same scale. We concede that DOTmark steps are coarse (a $16\times$ increase in the number of decision variables per step), so the table gives rough estimates of scaling boundaries only.

\begin{table}[t]
\caption{The list of GPU-accelerated DOT solvers considered and their associated format, as well as the resolution of the largest DOTmark they can solve and why they cannot solve larger ones. The rows of our three solvers are for the stopping tolerance $\mathrm{tol} = 10^{-4}$, and the $n_g = 1024$ tier is upsampled from the $n_g = 512$ images. \emph{Full ladder}: every size was solved. \emph{Timeout}: the next size hit the $1800$-second limit on all or most instances attempted. \emph{Device OOM} and \emph{host OOM}: the next size exceeds the device or the host memory allocated to a solve, measured or projected before launch. \emph{Solver abort}: the solver aborts its solve on most instances at the next size. \emph{Kernel fault}: the solver's upstream kernels fault (illegal memory access) at the next size, so it was not run there. \emph{Native time limit}: every solve at this size already ends on the solver's own time limit, so the next size was not run. $^{*}$M3S requires its supports to be a grid but does not require a separable cost, so we classify it as a Point Cloud format solver for the purposes of our comparisons. At $n_g = 512$ it returns a finite solution on $190$ of the $450$ instances and aborts on the other $260$, and its published code does not accept $n_g = 1024$.}
\label{tab:ceilings}
\centering
\begin{tabular}{llll}
\hline\noalign{\smallskip}
Solver & Format & Largest $n_g$ solved & Why not larger \\
\noalign{\smallskip}\hline\noalign{\smallskip}
Grid BDRS (ours) & Grid & $1024$ & full ladder \\
HOT & Grid & $512$ & device OOM \\
Conv.\ Sinkhorn & Grid & $1024$ & full ladder \\
Point Cloud BDRS (ours) & Point Cloud & $256$ & timeout \\
HALO & Point Cloud & $512$ & timeout \\
M3S & Point Cloud$^{*}$ & $256$ & solver abort \\
LAMP & Point Cloud & $256$ & native time limit \\
General BDRS (ours) & General & $256$ & device OOM \\
DROT & General & $128$ & kernel fault \\
PDOT & General & $128$ & device OOM \\
cuPDLPx & General & $64$ & host OOM \\
Sinkhorn & General & $128$ & device OOM \\
LAMP & General & $128$ & device OOM \\
\noalign{\smallskip}\hline
\end{tabular}
\end{table}

\subsection{The cost of generality across the three formats}\label{sec:exp-formats}
A natural first question to ask would be: \emph{does the computational cost of DOT across the three formats differ significantly?} In this subsection, we answer this question affirmatively, and find that the results double as an overview of the time--accuracy performance of all the DOT solvers under consideration.

\begin{figure}[t]
\centering
\includegraphics[width=0.99\textwidth]{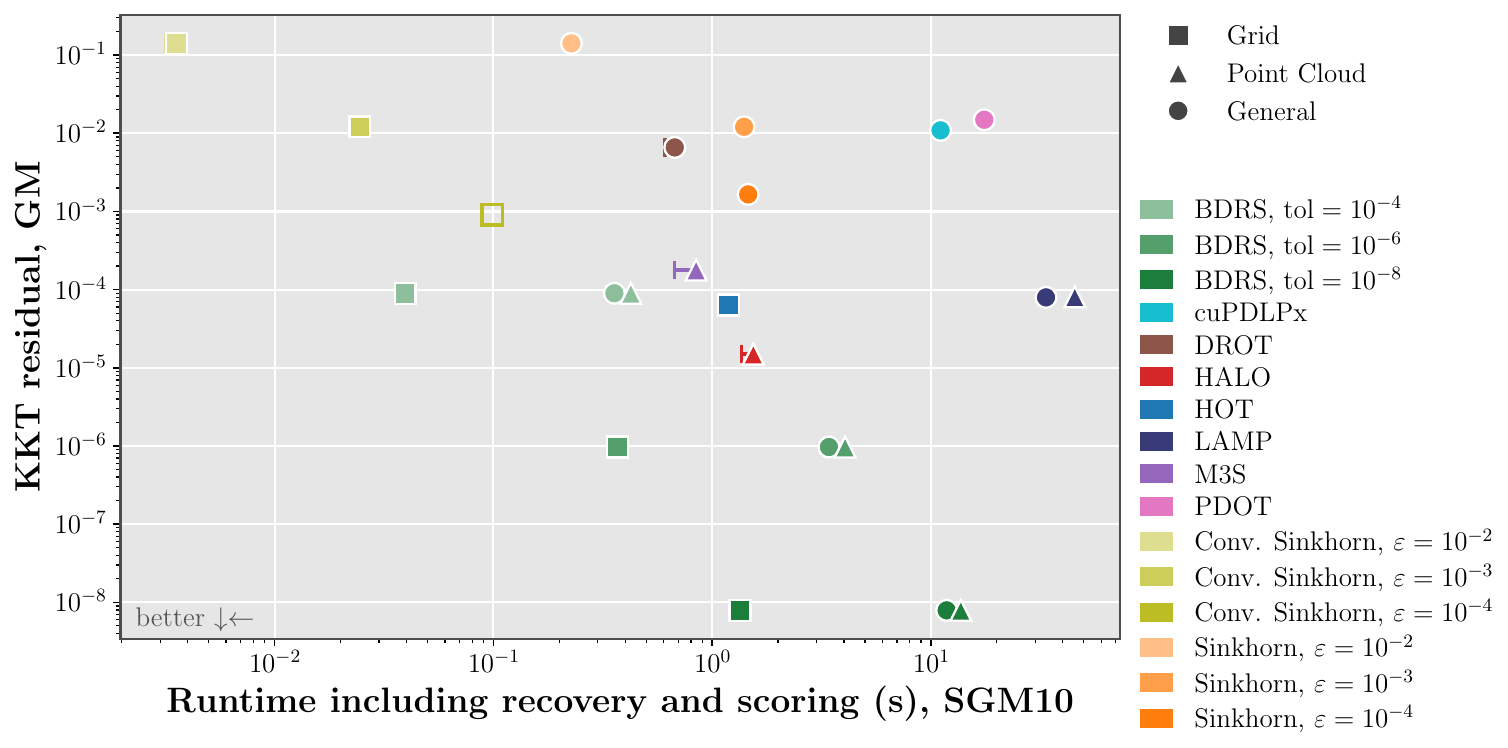}
\caption{Comparison of all solvers across all three formats on the same $414$ co-solved DOTmark instances at resolution $n_g=64$ with $\ell_2^2$ cost, those of the $450$ on which PDOT finished inside the time limit. We use a different color for each baseline and one color for our three solvers. Shades represent the tolerance for our solvers and regularization $\varepsilon$ for the Sinkhorn variants. Shapes indicate the format that the implementation consumes. HALO runs its grid mode on these instances. The $\varepsilon = 10^{-4}$ convolutional Sinkhorn returns non-finite potentials on $148$ of the $450$ instances, so it does not take part in forming the co-solved set; its point, drawn hollow, aggregates the $286$ co-solved instances on which its solution is finite.}
\label{fig:exp1}
\end{figure}

Figure~\ref{fig:exp1} presents the Pareto plot of the solvers over the DOTmark instances at $n_g=64$, with the $\ell_2^2$ ground cost. We lift these instances to both Point Cloud and General, so that we can pass this set of instances to all the solvers listed in Table~\ref{tab:ceilings} in their desired format. Each of our three solvers was run three times, at $\mathrm{tol} = 10^{-4}, 10^{-6}, 10^{-8}$, respectively. We observe that at each tolerance, all three of our BDRS solvers terminate at the same accuracy that is under the given tolerance, which is a validation of the fact that they run identical iterations under mathematically equivalent representations. Moreover, Grid BDRS achieves a $t_{\mathrm{cert}}$ of $0.040$~s, which is $11\times$ and $9\times$ faster than Point Cloud BDRS and General BDRS, respectively, demonstrating significant computational gaps even at the scale of $n_g=64$.

Moreover, the Point Cloud solver consumes more structure than General and is nonetheless the slower of the two, by $1.19\times$ at this cell. The reason is the memory--recompute tradeoff between the two. The General solver needs to store an $m \times n$ cost matrix to read from it, while the Point Cloud solver recomputes it from coordinates every time it is needed so that the device need not be able to hold an $m \times n$ array at scale. For a practitioner the rule is therefore: use the Grid format when the cost is separable; otherwise materialize the cost and use the General solver whenever it fits, since it is faster from $n_g = 64$ upward. Lastly, use the Point Cloud format when it does not fit, which on an $80$~GB device begins near $N = 2^{16}$.

Zooming out, every Grid baseline solver except the convolutional Sinkhorn at $\varepsilon = 10^{-2}$ and $10^{-3}$ is Pareto-dominated by our Grid solver at some choice of tolerance. Those two are $11\times$ and $1.6\times$ faster than our Grid solver at $\mathrm{tol} = 10^{-4}$ in SGM10 ($0.0036$ and $0.025$ against $0.040$~s), but their KKT residuals are $1.6 \times 10^{3}$ and $1.3 \times 10^{2}$ times larger, respectively. Our General solver Pareto-dominates every General solver except Sinkhorn at $\varepsilon = 10^{-2}$, where our General solver at $\mathrm{tol}=10^{-4}$ achieves a geometric mean of KKT residual $1.6 \times 10^{3}$ times smaller than the Sinkhorn solution, at the cost of $1.6\times$ runtime in SGM10 ($0.36$ against $0.23$~s). Lastly, our Point Cloud solver at $\mathrm{tol}=10^{-4}$ Pareto-dominates M3S while achieving a $3.6\times$ speedup over HALO at the cost of being $6.0\times$ less accurate in terms of the geometric mean of KKT residual. LAMP, in both its General and its Point Cloud form, is Pareto-dominated by our solver of the same format at $\mathrm{tol} = 10^{-6}$, which reaches a $\KKT$ of $9.7 \times 10^{-7}$ in $3.4$ and $4.1$~s against LAMP's $8.0$ and $8.1 \times 10^{-5}$ in $34$ and $45$~s. In summary, for this experiment, our solvers provide a favorable accuracy--time tradeoff against the corresponding baseline solvers in every format.

\subsection{Format-by-format comparison against the GPU baselines}\label{sec:exp-baselines}
Although Figure~\ref{fig:exp1} provides a head-to-head comparison for every solver in Table~\ref{tab:ceilings}, it does not test the scalability and the generalizability of the solvers. This subsection therefore performs in-depth comparisons between our implementations and the baselines on multiple datasets of varying sizes, in their respective format.

\subsubsection{The General format}
\textbf{Randomly generated cost} and \textbf{lifted DOTmark} are used to benchmark the performance of solvers that take in General DOT instances (which we call ``General solvers'' for the remainder of this section).

\begin{figure}[tp]
\centering
\includegraphics[width=\textwidth]{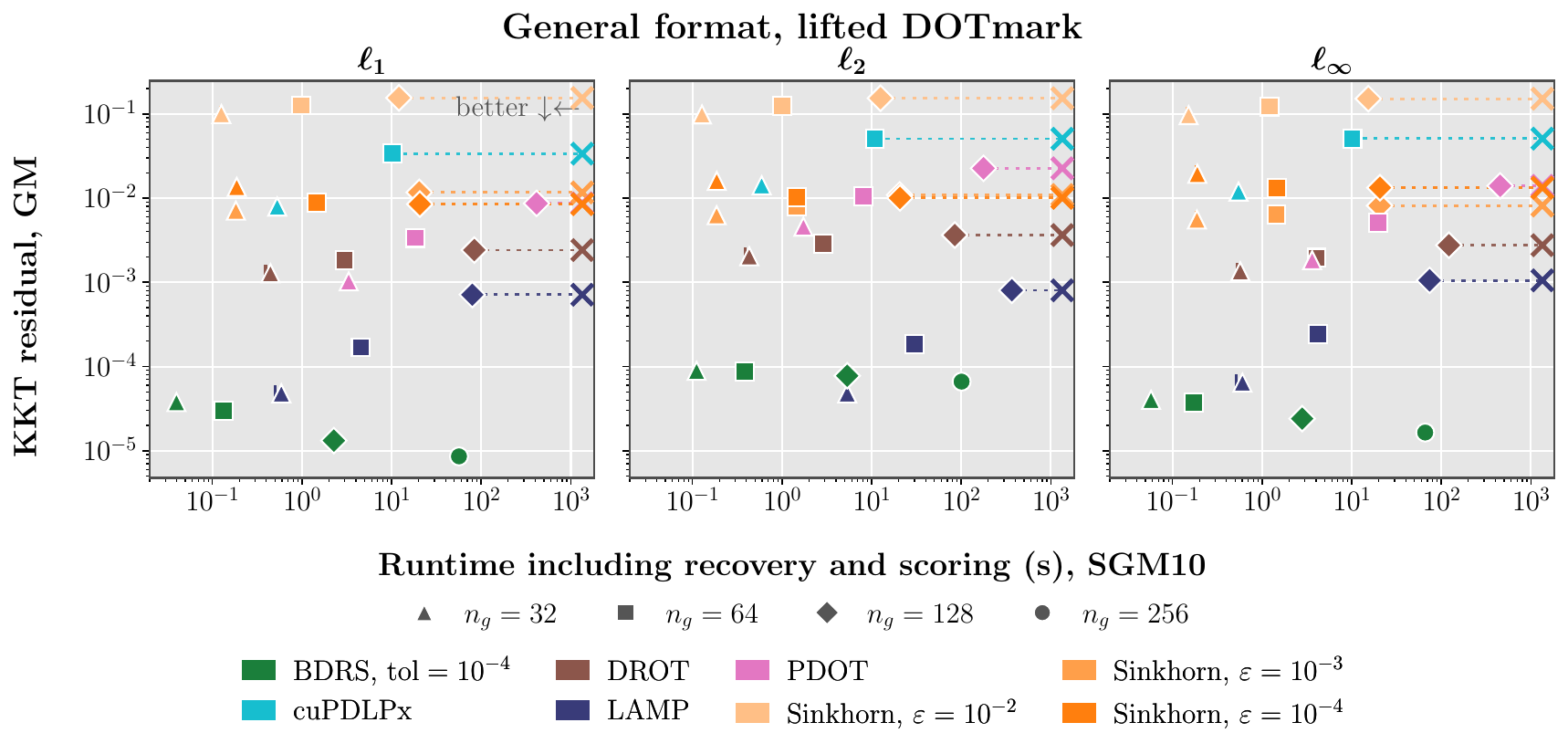}
\caption{Our General BDRS solver against the baseline General solvers on lifted DOTmark. In particular, the $\ell_1, \ell_2,$ and $\ell_\infty$ costs were used at each size $n_g \in \{32, 64, 128, 256\}$. Every point aggregates all $450$ instances of its choice of size and cost on DOTmark, except at $n_g = 128$ with the $\ell_1$ and $\ell_\infty$ costs, where the co-solved sets hold the $447$ and $441$ instances on which PDOT finished inside the time limit. Moreover, at $n_g=256$, our General solver solves all instances across all costs to $\mathrm{tol}=10^{-4}$. The crosses on each panel mark cuPDLPx's failed attempt at $n_g=128$ due to a host-side OOM and, at $n_g=256$, the solves that DROT, PDOT, LAMP and the three Sinkhorn arms decline. DROT runs into a kernel fault at $n_g=256$.}
\label{fig:exp21}
\end{figure}

Figure~\ref{fig:exp21} compares the General solvers on \textbf{lifted DOTmark} at $n_g \in \{32, 64, 128, 256\}$, one panel per cost. Across the three, our solver is the fastest solver, among those that solve~\eqref{eq:primal-problem} and those that solve its entropic approximation alike, at every size and cost, and its margin over DROT and PDOT widens with problem size. The margin is the narrowest at $n_g = 32$ on the unsquared Euclidean cost, where DROT is $3.9\times$ slower ($0.43$ against $0.11$~s) and the $\varepsilon = 10^{-2}$ Sinkhorn arm $1.1\times$ ($0.13$~s); at $n_g = 128$ ours leads DROT by $16\times$ to $43\times$ ($84$ to $121$~s against $2.3$ to $5.3$~s), PDOT by $33\times$ to $183\times$ ($177$ to $453$~s), and the three dense Sinkhorn arms by $2.3\times$ to $9.1\times$ ($12$ to $21$~s). The widest speedup over DROT is on $\ell_\infty$, where ours takes $2.8$~s against $121$~s; over PDOT and the $\varepsilon \in \{10^{-3}, 10^{-4}\}$ Sinkhorn arms, it is on $\ell_1$, where ours takes $2.3$~s against $415$~s and $20$ to $21$~s. Our General solver is also the most accurate solver in every cell up to $n_g = 128$ except one, at a $\KKT$ of between $1.3$ and $8.9 \times 10^{-5}$, against $1.3$ to $3.6 \times 10^{-3}$ for DROT, $1.0 \times 10^{-3}$ to $2.3 \times 10^{-2}$ for PDOT, $4.8 \times 10^{-5}$ to $1.1 \times 10^{-3}$ for LAMP, and $5.6 \times 10^{-3}$ to $1.5 \times 10^{-1}$ for the three dense Sinkhorn arms. The exception is the unsquared Euclidean cost at $n_g = 32$, where LAMP reaches $4.8 \times 10^{-5}$ against our $8.9 \times 10^{-5}$ while taking $48\times$ longer ($5.34$ against $0.111$~s). As a result, apart from the aforementioned case, our General solver Pareto-dominates every other baseline solver, whether it solves~\eqref{eq:primal-problem} or its entropic approximation, at every problem size, for every choice of cost on the DOTmark dataset.

At $n_g = 256$ our General solver continues to perform consistently while the baseline solvers face memory restrictions or kernel faults. In particular, our General solver solves all $450$ instances on each of the three costs, at a $\KKT$ between $8.6 \times 10^{-6}$ and $6.6 \times 10^{-5}$, in $66$, $57$ and $101$~s on $\ell_\infty$, $\ell_1$ and the unsquared Euclidean cost, respectively. This observation is consistent with the memory bottleneck recorded in Table~\ref{tab:ceilings}.

\begin{figure}[t]
\centering
\includegraphics[width=\textwidth]{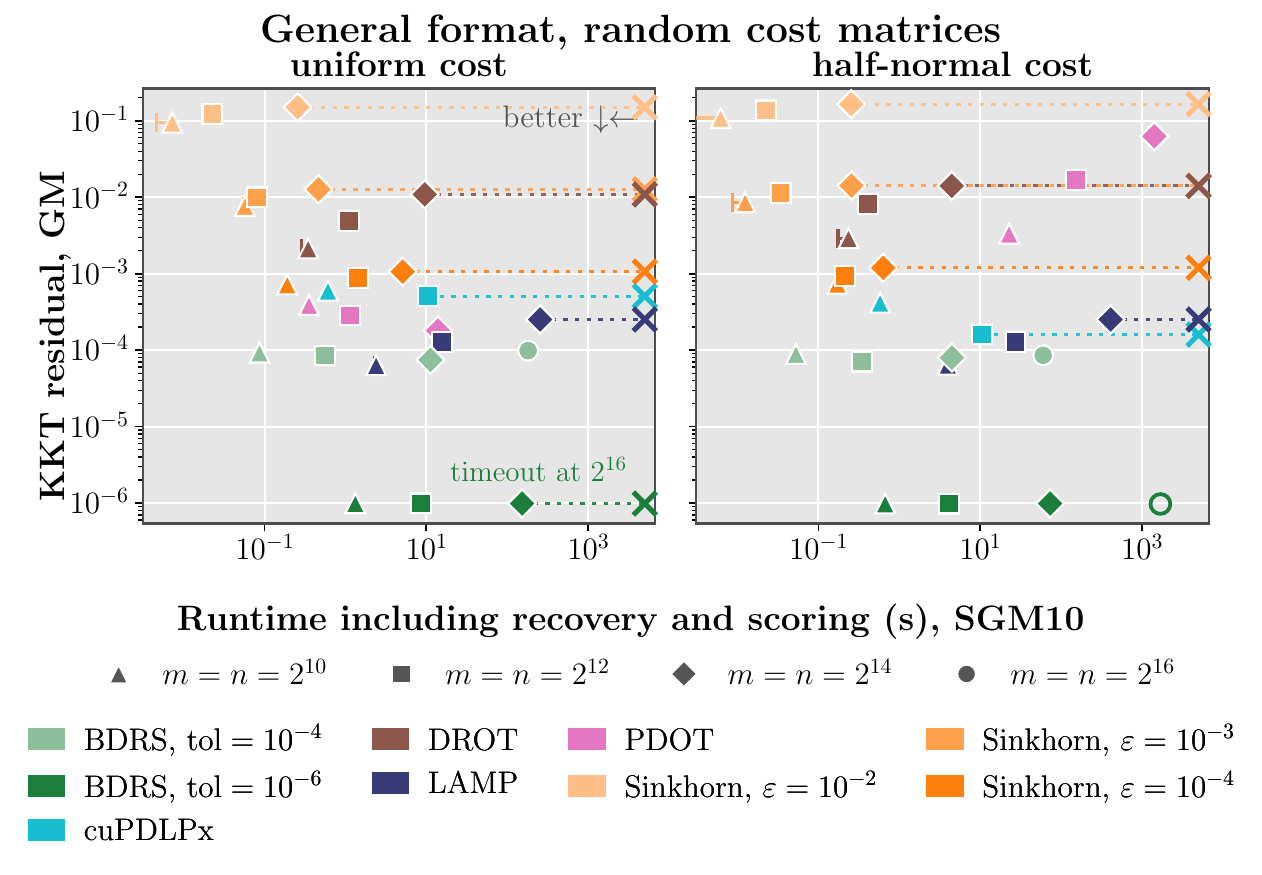}
\caption{Our General BDRS solver against the baseline General solvers on random cost matrices whose entries are drawn independently from the uniform distribution (left) and from the half-normal distribution (right), with ten instances per cost family at each size $m = n = N \in \{2^{10}, 2^{12}, 2^{14}, 2^{16}\}$, with our solver at $\mathrm{tol} \in \{10^{-4}, 10^{-6}\}$. A cross at the right edge marks the next size at which an arm has no scored solve: $N = 2^{14}$ for cuPDLPx, which runs out of host memory there, and $N = 2^{16}$ for DROT, LAMP and the three Sinkhorn arms, which decline that size before solving; PDOT is projected to time out and therefore is not run at $N = 2^{16}$. On the half-normal cost, the points at $N = 2^{12}$ and $2^{14}$ aggregate only the $9$ and $6$ of the $10$ instances on which PDOT finished inside the time limit. At $N = 2^{16}$, the half-normal points aggregate the $8$ of the $10$ instances on which our solver did not exceed device memory ($7$ at $\mathrm{tol} = 10^{-6}$, drawn hollow, where one more solve exceeds the time limit), and on the uniform cost, our $\mathrm{tol} = 10^{-6}$ arm exceeds the time limit on all $10$. Every solve of ours at $N = 2^{16}$ that completes peaks at $64.8$~GiB of device memory, holding $C$ and the dense plan $X$; on the two half-normal instances that exceed it, rounding leaves $\max(C) = 1 - 2^{-53}$, so \textsc{Normalize} also keeps $C/\max(C)$ as a second $2^{16} \times 2^{16}$ array, which no longer fits.}
\label{fig:exp21p}
\end{figure}

Figure~\ref{fig:exp21p} presents the accuracy--time Pareto plots of the General solvers on \textbf{random cost matrices}. Our General solver Pareto-dominates the baseline General solvers of~\eqref{eq:primal-problem} at $2^{10}$ and $2^{12}$ sizes for both the uniform cost and the half-normal cost. At $N = 2^{14}$ the picture changes for one baseline. cuPDLPx exceeds its host memory on both costs, and PDOT remains dominated on both panels. DROT, however, accepts a solution faster than our General solver by $15$ percent on the uniform cost ($9.6$ against $11.3$~s) and is essentially tied with it on the half-normal cost ($4.47$ against $4.49$~s), but at a $\KKT$ of $1.1$ and $1.4 \times 10^{-2}$, more than two orders of magnitude above ours. Although neither solver dominates the other, we claim that our solver remains the favorable trade at that size. LAMP is Pareto-dominated by our $\mathrm{tol} = 10^{-6}$ arm at $N = 2^{10}$ and by our $\mathrm{tol} = 10^{-4}$ arm at $2^{12}$ and $2^{14}$. It is projected to exceed device memory at $2^{16}$.

Among the entropic solvers, the $\varepsilon = 10^{-2}$ and $\varepsilon = 10^{-3}$ Sinkhorn implementations accept a solution faster than ours everywhere, but our General solver overtakes the $\varepsilon = 10^{-4}$ Sinkhorn variant at $N = 2^{10}$ on both costs and at $2^{12}$ on the uniform cost. Moreover, as expected, no Sinkhorn variant produces a primal--dual solution with a $\KKT$ less than $7.2 \times 10^{-4}$, and this residual grows by approximately one order of magnitude for every order of magnitude $\varepsilon$ increases. Meanwhile, our General solver achieves a $\KKT$ of less than $10^{-4}$, as determined by $\mathrm{tol}$, which is $7.8$ to $2070$ times lower than those of the Sinkhorn variants.

\subsubsection{The Point Cloud format}

\textbf{ModelNet10} and \textbf{lifted DOTmark} are used to benchmark the performance of the Point Cloud solvers, namely our BDRS Point Cloud solver, HALO's \texttt{tree} mode on ModelNet10 and its grid mode on lifted DOTmark, M3S, and LAMP's linear-space mode. Notably, M3S further requires the support of the marginal distributions to lie on a grid, but does not require the cost to be separable. Its published GPU solver evaluates only the squared Euclidean cost. This puts M3S's format specifications between the Grid and Point Cloud formats. For the purposes of this benchmark, we place it in the Point Cloud campaign. The same holds for HALO's grid mode, which also requires grid supports.

\begin{figure}[t]
\centering
\begin{subfigure}[t]{0.48\textwidth}
\centering
\includegraphics[width=\textwidth]{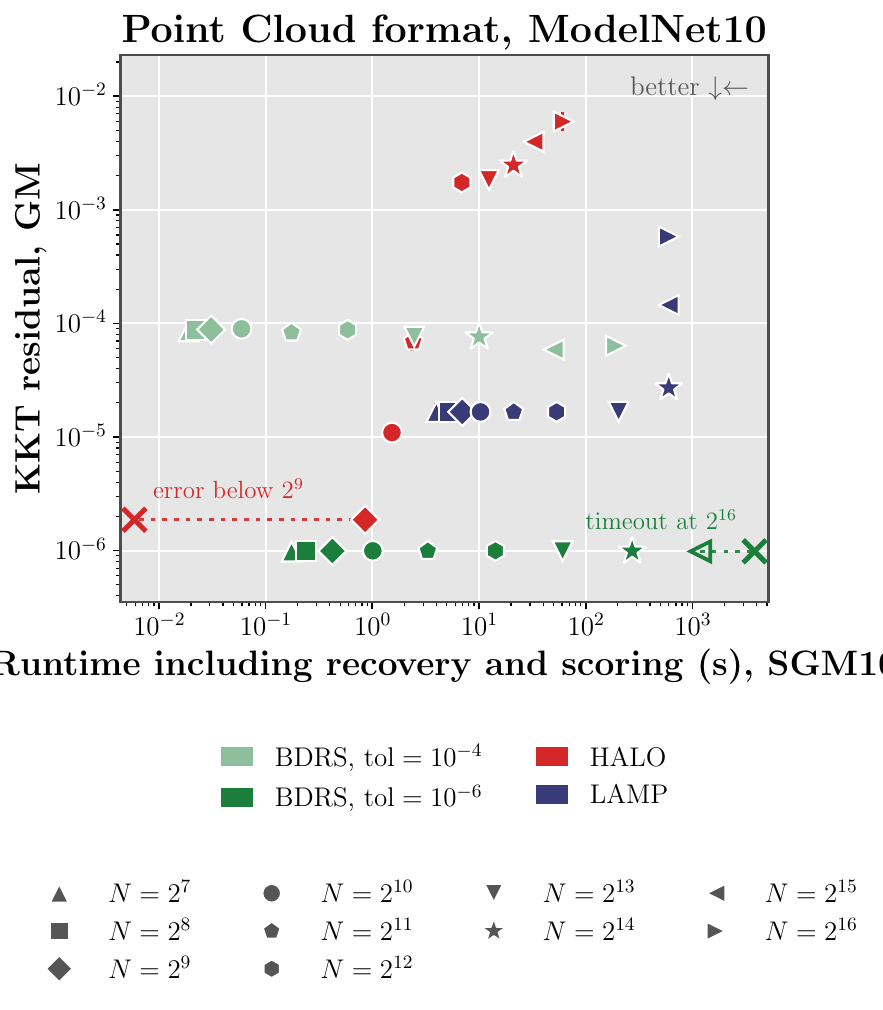}
\caption{ModelNet10.}
\label{fig:exp22}
\end{subfigure}\hfill
\begin{subfigure}[t]{0.48\textwidth}
\centering
\includegraphics[width=\textwidth]{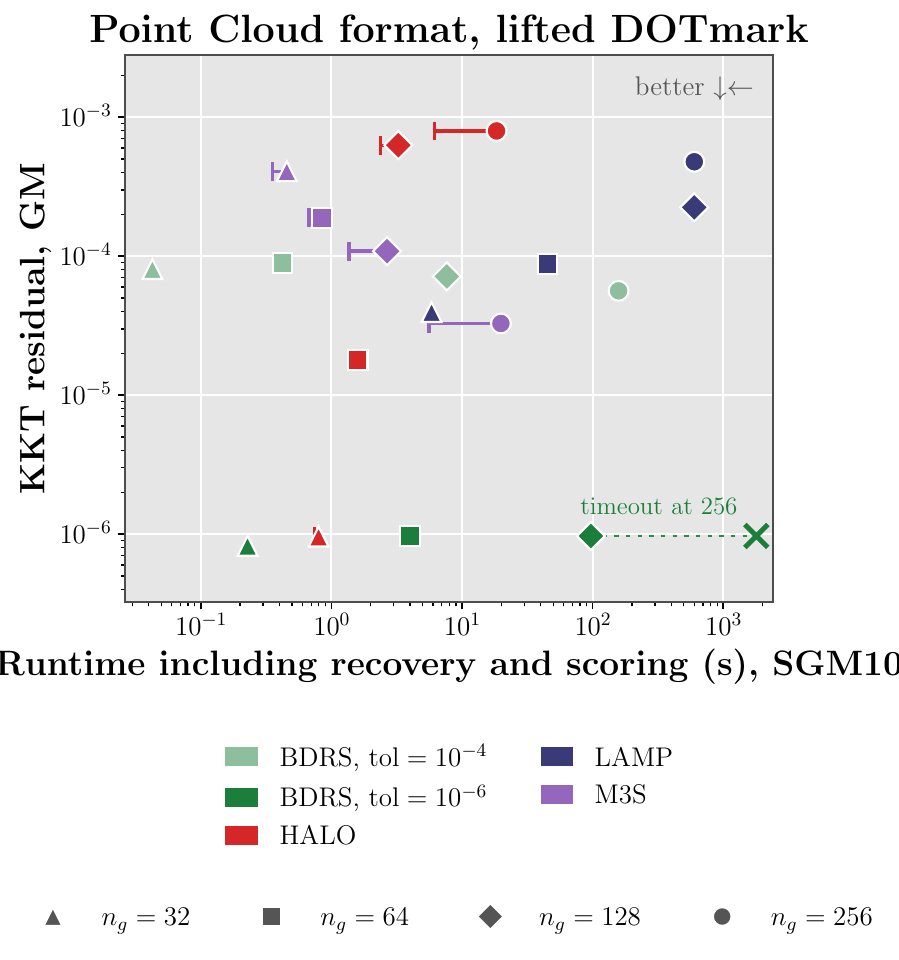}
\caption{Lifted DOTmark with the $\ell_2^2$ cost.}
\label{fig:exp22b}
\end{subfigure}
\caption{Our Point Cloud solver with $\mathrm{tol} \in \{10^{-4}, 10^{-6}\}$ against (a) HALO and LAMP on ModelNet10 and (b) HALO, M3S and LAMP on lifted DOTmark. (a) Our Point Cloud solver and HALO in its tree mode on ModelNet10 with the squared Euclidean cost, thirty pairs at each size from $N = 2^{7}$ to $2^{16}$ points per marginal. The cross at the left edge represents the fact that HALO rejects the $2^{7}$ and $2^{8}$ instances. (b) Our Point Cloud solver, HALO in its grid mode, M3S and LAMP on lifted DOTmark with the $\ell_2^2$ cost, with resolutions $n_g \in \{32, 64, 128, 256\}$. The co-solved set holds $450$ instances at $n_g = 32$ and $64$, $439$ at $128$ and $370$ at $256$. The excluded instances are those on which M3S aborted the solve due to ``\texttt{too many absorptions}'' at $n_g=128$ and $n_g=256$. Our $\mathrm{tol} = 10^{-6}$ arm reaches $N = 2^{15}$ in (a) and $n_g = 128$ in (b), the largest sizes at which it completes within the $1800$~s budget. One of the thirty instances at $N = 2^{15}$ exceeds it and is excluded, so that point is drawn hollow. The cross of this arm at $N = 2^{16}$ in (a) is a timeout projected from its time at $2^{15}$. The cross of this arm at $n_g = 256$ in (b) is a timeout. LAMP's points sit at its own $600$~s limit from $N = 2^{14}$ in (a) and at $n_g = 128$ and $256$ in (b).}
\label{fig:exp22pair}
\end{figure}
Figure~\ref{fig:exp22} compares our Point Cloud solver against HALO's \texttt{tree} mode on ModelNet10. HALO rejects instances with $N=2^7$ and $N=2^8$. Therefore, our solver and LAMP are the only ones that are able to solve ModelNet10 instances with sizes below $N=2^9$, where we do so successfully up to $\mathrm{tol}=10^{-6}$ for all instances. For $N \in \{2^9, \dots, 2^{14}\}$, we accept a solution faster, by $28\times$ at $2^{9}$ narrowing to $2.1\times$ at $2^{14}$, and beyond it HALO is faster, by $1.5\times$ at $2^{15}$ and $3.1\times$ at $2^{16}$. Our $\mathrm{tol} = 10^{-6}$ arm is more accurate than HALO at every size it reaches, and our $\mathrm{tol} = 10^{-4}$ arm is more accurate from $2^{12}$ on. Concretely, our KKT residual consistently stays below the desired tolerance, at $10^{-4}$ and $10^{-6}$, while HALO accepts solutions with a KKT residual that increases monotonically with $N$, ranging from $1.9 \times 10^{-6}$ at $N=2^{9}$ to $6.0 \times 10^{-3}$ at $N=2^{16}$. LAMP's linear-space mode is Pareto-dominated by our $\mathrm{tol} = 10^{-6}$ arm up to $N = 2^{14}$ and by our $\mathrm{tol} = 10^{-4}$ arm at $2^{15}$ and $2^{16}$.

Figure~\ref{fig:exp22b} runs the same comparison on lifted DOTmark with HALO in its grid mode and M3S added, and the pattern repeats. At $n_g = 32$ and $64$, our $\mathrm{tol}=10^{-4}$ arm accepts a solution $19\times$ and $3.8\times$ faster than HALO and $11\times$ and $2.0\times$ faster than M3S; at $n_g = 128$ and $256$ both baselines are faster than we are, HALO by $2.4\times$ and $8.6\times$ and M3S by $2.9\times$ and $8.0\times$. Our KKT residual is again flat across the four sizes, between $5.6 \times 10^{-5}$ and $9.0 \times 10^{-5}$, while HALO's drawn points rise from $9.6 \times 10^{-7}$ to $8.0 \times 10^{-4}$. M3S, which solves the entropic problem rather than~\eqref{eq:primal-problem}, is the least accurate of the three at the two smaller sizes, at $4.1 \times 10^{-4}$ and $1.9 \times 10^{-4}$. At $n_g=128$ and $n_g=256$, M3S aborts the solve for $11$ and $80$ instances out of the $450$ total instances, while HALO, LAMP and our Point Cloud solver solve all $450$. LAMP's linear-space mode is again Pareto-dominated by our $\mathrm{tol} = 10^{-6}$ arm at $n_g = 32$ and $64$, and by our $\mathrm{tol} = 10^{-4}$ arm at $128$ and $256$. Among the co-solved problems at $n_g=128$ and $n_g=256$, M3S demonstrates comparable-to-better performance both in terms of accuracy and runtime. In summary, our Point Cloud solver is competitive in runtime and accuracy on small instances, and generally trades slower solve time for better accuracy against HALO and M3S at larger scales. Among the baselines, HALO scales the best in runtime but rejects smaller instances in its tree mode and has degrading solution accuracy, while M3S achieves greater accuracy as problem size increases but becomes unreliable at medium-to-large scales.

\subsubsection{The Grid format}
\textbf{DOTmark} is used to benchmark the performance of solvers that take in Grid DOT instances (``Grid solvers''), at $n_g \in \{32, 64, 128, 256, 512, 1024\}$ (the $n_g = 1024$ tier upsampled from the $n_g = 512$ images) with the squared Euclidean and the $\ell_1$ cost. The Grid solvers consist of our Grid BDRS solver at $\mathrm{tol} \in \{10^{-4}, 10^{-6}, 10^{-8}\}$, HOT, whose reduced flow model accepts the squared Euclidean cost only, and the convolutional Sinkhorn at $\varepsilon \in \{10^{-2}, 10^{-3}, 10^{-4}\}$.

\begin{figure}[t]
\centering
\includegraphics[width=\textwidth]{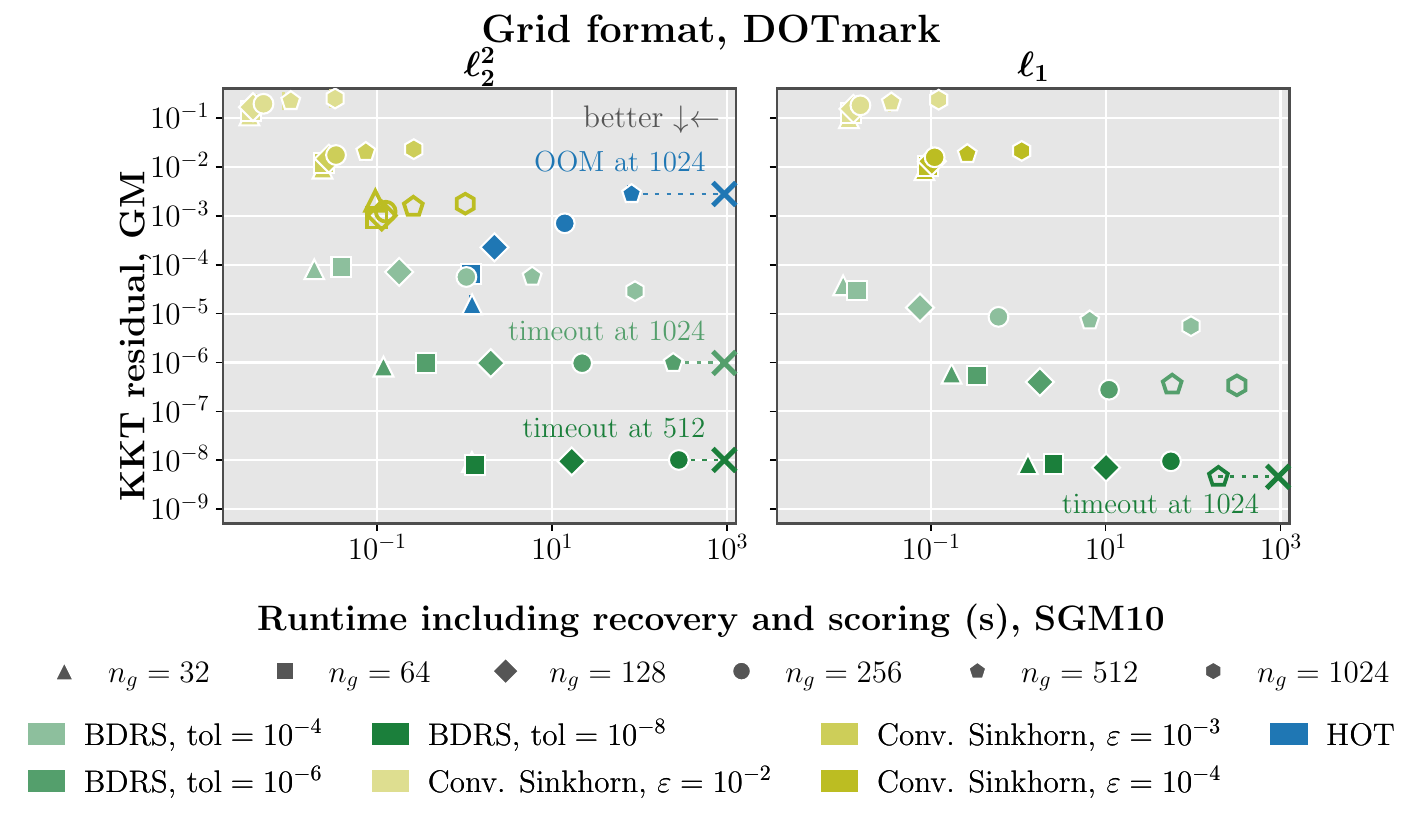}
\caption{The Grid solvers on DOTmark at $n_g = 32$ to $1024$ (the $n_g = 1024$ tier upsampled from the $n_g = 512$ images), with the $\ell_2^2$ cost (left) and the $\ell_1$ cost (right). Our Grid solver is drawn at $\mathrm{tol} \in \{10^{-4}, 10^{-6}, 10^{-8}\}$ and the convolutional Sinkhorn at $\varepsilon \in \{10^{-2}, 10^{-3}, 10^{-4}\}$. HOT accepts only the $\ell_2^2$ cost and therefore appears only on the left, where its certificate lines are hidden under its markers. An arm is drawn at a size where most of its solves return a finite solution within the time limit, and each point aggregates the $450$ co-solved instances of its size, with three exceptions, whose points are drawn hollow. (i) With the $\ell_1$ cost, our $\mathrm{tol} = 10^{-6}$ arm aggregates $418$ instances at $n_g = 512$, where the $32$ WhiteNoise instances time out, and at $n_g = 1024$, where those $32$ are not run, the $296$ of the $418$ that finish. (ii) With the $\ell_1$ cost, our $\mathrm{tol} = 10^{-8}$ arm at $n_g = 512$ is run on five pairs per class and aggregates the $42$ of the $50$ solves that finish. (iii) With the $\ell_2^2$ cost, the convolutional Sinkhorn at $\varepsilon = 10^{-4}$ aggregates $285$ to $366$ instances per size, because in OTT's scaling mode, which we run instead of its default log-domain mode, its iterations underflow to non-finite potentials on the rest. With the $\ell_1$ cost this happens on more than half of the instances at every size, so that arm is not drawn. A cross marks the size after an arm's last point. On the left at $n_g = 1024$, HOT runs out of device memory on every instance. The other three crosses are timeouts of our Grid solver, projected from its solve times at the next smaller size: $\mathrm{tol} = 10^{-6}$ at $n_g = 1024$ and $\mathrm{tol} = 10^{-8}$ at $n_g = 512$ with the $\ell_2^2$ cost, and $\mathrm{tol} = 10^{-8}$ at $n_g = 1024$ with the $\ell_1$ cost.}
\label{fig:exp23}
\end{figure}

Figure~\ref{fig:exp23} compares the Grid solvers on \textbf{DOTmark} at $n_g \in \{32, \dots, 1024\}$, one panel per cost. Against HOT, our solver is Pareto-dominant at every size at which HOT returns a solution ($n_g \le 512$). From $n_g = 128$ to $512$ the $\mathrm{tol} = 10^{-4}$ arm is both faster (by $12\times$ to $14\times$) and more accurate (HOT's $\KKT$ rising from $2.3 \times 10^{-4}$ to $2.8 \times 10^{-3}$ while ours stay below $10^{-4}$) than HOT. At $n_g=32$ and $n_g=64$, where HOT's $\KKT$ of $1.6 \times 10^{-5}$ and $6.5 \times 10^{-5}$ is below our native tolerance, the $\mathrm{tol} = 10^{-6}$ variant of our Grid solver is both faster ($0.12$ and $0.36$~s against $1.22$ and $1.19$~s) and $19$ and $67$ times more accurate. Lastly, at $n_g=1024$, HOT runs out of device memory while ours finishes in $89$~s for all 450 instances. At that size, only the $\mathrm{tol} = 10^{-4}$ arm completes all $450$ instances within the budget for both costs. With the $\ell_1$ cost, the $10^{-6}$ arm was run on $418$ instances, the $32$ WhiteNoise instances that time out at $n_g = 512$ not being run, and $296$ of them finish while $122$ time out; with the $\ell_2^2$ cost it is projected to time out. The $10^{-8}$ arm stops one size earlier still, at $n_g = 256$ with the $\ell_2^2$ cost and at $n_g = 512$ with the $\ell_1$ cost.

Convolutional Sinkhorn sacrifices accuracy --- as measured by its recovered primal--dual pair --- and reliability for fast runtimes. In particular, its $t_{\mathrm{cert}}$ barely moves with size, growing only $9.6$- to $13$-fold per arm while $N$ grows $1024$-fold. However, no entropic point with either cost falls below $\KKT = 9.1 \times 10^{-4}$, and at small values of $\varepsilon$, in the tested scaling-mode configuration, it stops being reliable. Concretely, the $\varepsilon = 10^{-2}$ and $10^{-3}$ variants solve all $450$ instances with both costs, but the KKT residuals their points achieve sit between $8.6 \times 10^{-3}$ and $2.5 \times 10^{-1}$, at least two orders of magnitude above ours at the same size. Its most accurate arm with $\ell_2^2$, the $\varepsilon = 10^{-4}$ variant, returns non-finite potentials on $19$ to $37$ percent of the instances with the squared Euclidean cost and on $65$ to $90$ percent with $\ell_1$, the latter of which is therefore not drawn on the right panel. By contrast, our Grid solver returns a certified solution on every instance it completes. Moreover, at $n_g = 32$, our Grid solver at $\mathrm{tol}=10^{-4}$ Pareto-dominates all convolutional Sinkhorn variants except $\varepsilon = 10^{-2}$ with the $\ell_2^2$ cost, and it dominates the $\varepsilon = 10^{-3}$ variant up to $n_g = 128$ with $\ell_1$ and the $\varepsilon = 10^{-4}$ variant up to $n_g = 64$ with $\ell_2^2$. In summary, our Grid solver at $\mathrm{tol}=10^{-4}$ is the only solver of~\eqref{eq:primal-problem} that solves all $450$ DOTmark instances at every size with both costs. Our Grid solver Pareto-dominates HOT at every size at which HOT returns a solution. Convolutional Sinkhorn's runtime scales better with problem size, but at the cost of accuracy and, at small $\varepsilon$ in the tested configuration, reliability.

\subsection{Ablation and robustness studies}\label{sec:exp-ablation}

In this subsection, we demonstrate the dual stall phenomenon of vanilla BDRS and perform ablation studies on the c-transform post-processing on the various Douglas--Rachford splitting-based solvers to demonstrate its effectiveness on BDRS. Then, we examine the robustness of BDRS with respect to the step size parameter $\eta$ and the termination check period $T_{\mathrm{chk}}$.

\subsubsection{Dual stall phenomenon and c-transform ablation} \label{sec:exp-ablation-stall}
\begin{figure}[t]
\centering
\includegraphics[width=\textwidth]{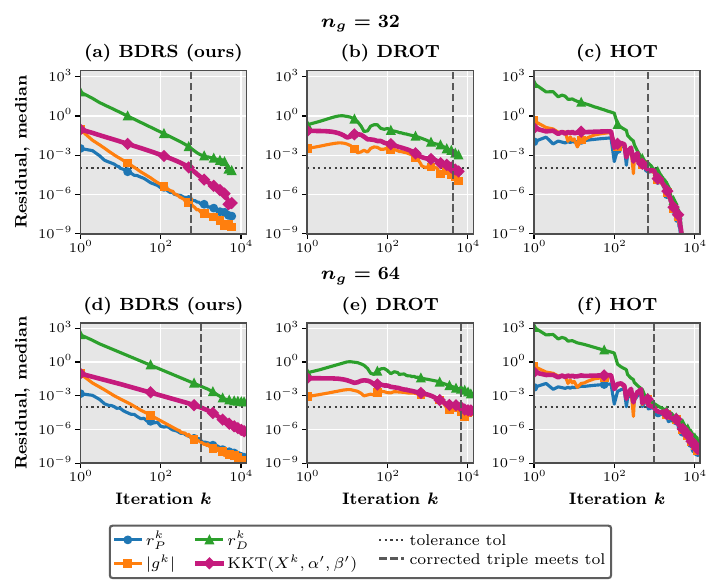}
\caption{The median of various residuals against iteration count for the three Douglas--Rachford solvers (BDRS, DROT, HOT), on the same $50$ DOTmark instances, with the $\ell_2^2$ cost and $\mathrm{tol}=10^{-4}$, at $n_g = 32$ in panels (a), (b), and (c) (resp. $n_g = 64$ in panels (d), (e), and (f)). Each solver was run for a fixed $6000$ (resp. $12000$) iterations with no stopping rule and scored at $101$ (resp. $163$) checks from iteration $1$. On all six panels, the blue, green, and orange curves are the medians of $r_P^k$, $r_D^k$, and $|g^k|$ for the triple $(X^k, \alpha^k, \beta^k)$, respectively. The magenta curve is the median KKT residual of the corrected triple $(X^k, \alpha', \beta')$, whose duals are the c-transformed \emph{corrected duals}. Note that the corrected duals are never fed back into the iterations and only serve as a termination check proxy.}
\label{fig:dual-stall}
\end{figure}

Section~\ref{sec:c-transform} motivates the c-transform post-processing by the observation that $r_D$ lags behind the other two residuals. Figure~\ref{fig:dual-stall} quantifies that lag on a subset of DOTmark for the three Douglas--Rachford solvers (BDRS, DROT, HOT). For BDRS and DROT, the KKT residual~\eqref{eq:kkt} of the native iterates is equal to $r_D^k$ at more than $98\%$ of the checkpoints. This is the aforementioned ``dual stall''. At the check where the corrected triple first meets the tolerance, the dual residual is $8.2 \times 10^{3}$ times (resp. $5.3 \times 10^{4}$ times) above the larger of the other two terms for BDRS at $n_g = 32$ (resp. $64$), and $21$ times (resp. $50$ times) for DROT. The dual stall is sharper for BDRS, as at $n_g = 32$ the primal residual and the native gap are $236$ and $540$ times below the tolerance, while the dual is $35$ times above it. As a result, the green curves (representing $r_D^k$) on panels (a), (b), (d), and (e) can also be interpreted as the native KKT residual, which always lags behind the corrected KKT residual, in magenta, demonstrating that the c-transform post-processing shortens the solve. More specifically, it shortens the solve by a median factor of $7.5$ for BDRS on the $42$ of the $50$ instances whose native triple reaches the tolerance at $n_g = 32$; at $n_g = 64$ the native triple reaches it on only $4$ of the $50$, so we take the length of the iteration window as the native crossing of the other $46$, which bounds the median factor below by $12$. For DROT the native triple never reaches the tolerance, so we can only lower bound the factor by $1.4$ (resp. $1.8$), which is computed as the length of the iteration window divided by the corrected crossing. For HOT, the largest term starts as $r_D^k$ but stops being the largest term at a median $k = 1000$ and $1600$ at the two sizes, at which point the native KKT residual and the corrected KKT residual become equal. Before that crossover, HOT's dual stall is of the same order as ours, $707$ times (resp. $2827$ times) the larger of the other two terms at $n_g = 32$ (resp. $n_g=64$) at $k = 1$. What separates HOT is that its stall then decays, to $182$ and $712$ times at $k = 100$ and to $2.4$ and $21$ times at $k = 500$, while ours grows to $9.2 \times 10^{3}$ and $3.5 \times 10^{4}$ at $k = 100$. As a result, the c-transform post-processing has a limited effect on HOT, shortening the solve by a median factor of $1.4$ and $1.5$.

\subsubsection{Robustness of BDRS with respect to hyperparameters} \label{sec:ablation-robustness} There are two hyperparameters for our solvers: the step size $\eta$ and the check period $T_{\mathrm{chk}}$. We provide a simple default rule for both, and leave the precise tuning of these parameters as future work. The deployed rules for both are described in Section~\ref{sec:experiment-setup}.

\begin{figure}[t]
\centering
\begin{subfigure}[t]{0.48\textwidth}
\centering
\includegraphics[width=\textwidth]{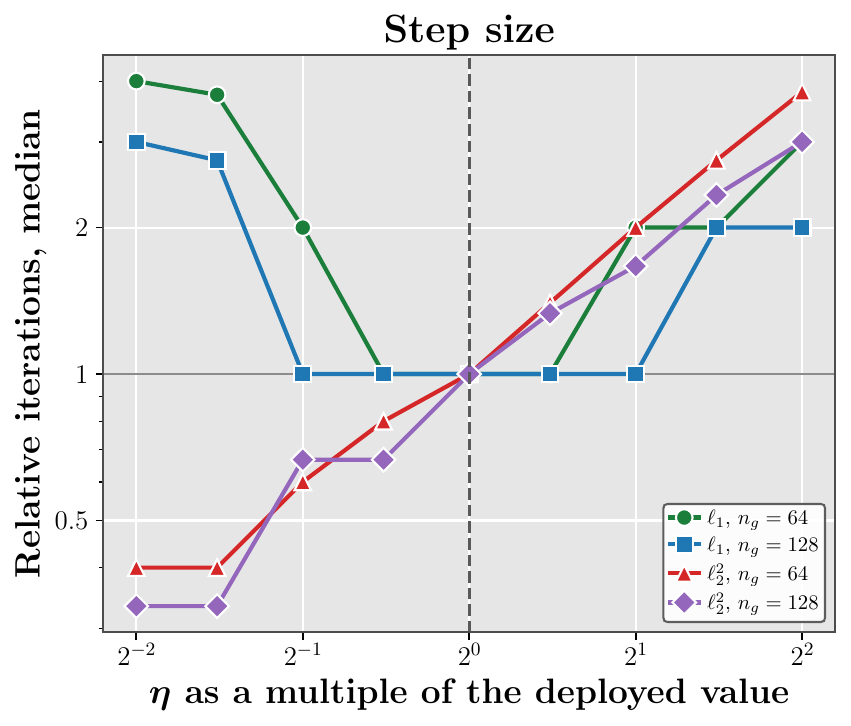}
\caption{Relative iteration count against step size $\eta$.}
\label{fig:robustness-eta}
\end{subfigure}\hfill
\begin{subfigure}[t]{0.48\textwidth}
\centering
\includegraphics[width=\textwidth]{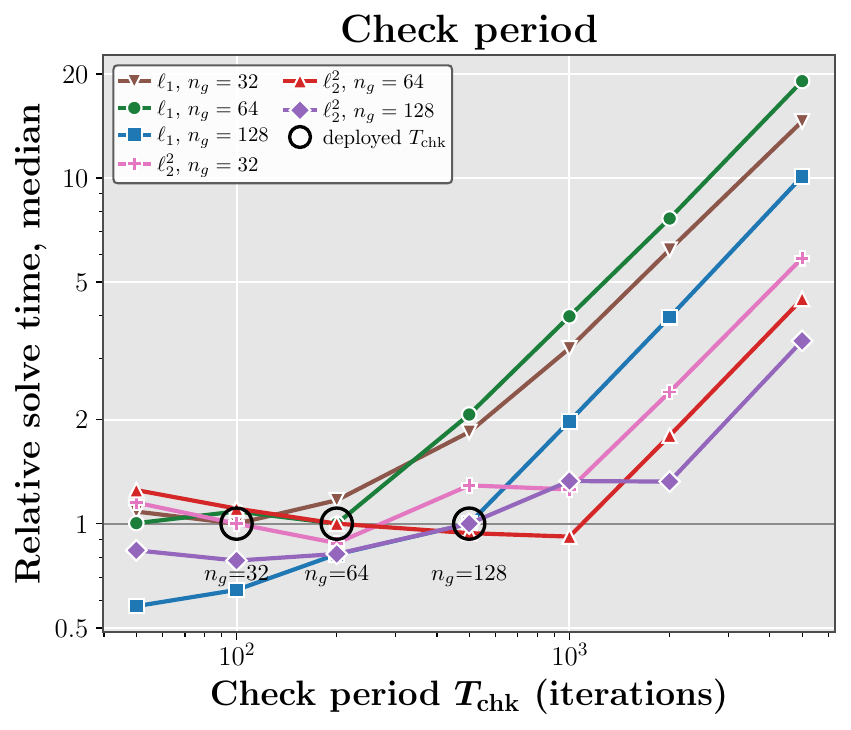}
\caption{Relative solve time against check period $T_{\mathrm{chk}}$.}
\label{fig:robustness-tchk}
\end{subfigure}
\caption{Sensitivity of our Grid solver to the (a) step size $\eta$ and (b) check period $T_{\mathrm{chk}}$, on DOTmark at $\mathrm{tol} = 10^{-4}$. Each solve's iteration count in (a), and its solve wall-clock time in (b), is divided by the same quantity of the same instance at the deployed value of the swept parameter, and the solid horizontal line marks a ratio of one. For both panels, one curve is plotted per choice of ground cost and resolution. (a) Resolutions $n_g \in \{64, 128\}$ were used, with $50$ instances each (five sampled pairs from all ten classes). The dashed vertical line marks the production value. (b) The production $\eta$ was used at $T_{\mathrm{chk}}$ ranging from $50$ to $5000$. Resolutions $n_g \in \{32, 64, 128\}$ were used, with $30$ instances each (three sampled pairs from all ten classes). The rings mark the deployed period of each resolution, $100$, $200$ and $500$ at $n_g = 32$, $64$ and $128$, where each curve is one by construction.}
\label{fig:robustness}
\end{figure}

Figure~\ref{fig:robustness} shows how the iteration count and solve time of our algorithm move as we perturb each parameter on a subset of DOTmark instances. In Figure~\ref{fig:robustness-eta}, we sweep the value of $\eta$ across a factor of sixteen, from a quarter of the deployed value to four times it. We find that our solvers are able to solve all instances in every perturbation, while the median iteration count stays inside $[0.33, 4.0]$ times the count that the solve took to solve the instances at the deployed value. Moreover, it is clear that the deployed $\eta$ is not the optimal choice for the $\ell_2^2$ instances of this subset, since we observe a linear trend between $\eta$ and the median of the iteration count required to solve them, but we emphasize that despite this suboptimality we still achieve the scalability demonstrated in Section~\ref{sec:exp-baselines}.

Next, in Figure~\ref{fig:robustness-tchk}, we sweep $T_{\mathrm{chk}}$ across a hundred-fold span, from $50$ to $5000$ iterations. Every perturbation solves every instance. The median solve time is flat from $T_{\mathrm{chk}} = 50$ to $T_{\mathrm{chk}} = 500$. At $T_{\mathrm{chk}} \ge 1000$, which represents an overly loose check period at the tested problem sizes, the solver mostly terminates at the first termination check, causing wasted iterations. Conversely, for $T_{\mathrm{chk}} = 50$, a period that is slightly shorter than preferred for the tested problem sizes, we indeed terminate at fewer total iterations, but at the cost of more frequent checks. More specifically, at $T_{\mathrm{chk}} = 50$, the pooled median solve takes about $16$ percent fewer iterations than at the deployed period and yet $5$ percent more time.

\section{Conclusion}\label{sec:conclusion}

In this paper, we presented three GPU implementations of the BDRS algorithm for solving the DOT problems with three data formats. We proposed a c-transform post-processing step that produces dual variables that are feasible up to machine precision. This procedure reduced the number of iterations required to satisfy the stopping criterion, without changing the BDRS trajectory. We conducted extensive numerical experiments to compare our GPU-accelerated BDRS with existing competitive solvers. The experiments demonstrate strong accuracy--time performance for the General and Grid implementations. The Point Cloud implementation is competitive on smaller instances and offers a runtime--accuracy trade-off against HALO and M3S at larger scales. Future work will distribute the computation across multiple GPUs, as suggested by the recent developments in multi-GPU mathematical programming solvers~\cite{li2026d,deza2026shardlp}.

\section*{Statements and Declarations}

\paragraph{Competing interests.} The authors have no relevant financial or non-financial interests to disclose.

\paragraph{Code and data availability.} The code of our three solvers and of the benchmark, including the scripts that build the datasets, is available at \url{https://github.com/mrhoggboss/BDRS-for-DOT} under the MIT license; the version used in this paper is release v2.1, \url{https://github.com/mrhoggboss/BDRS-for-DOT/releases/tag/v2.1}. DOTmark and ModelNet10 are public datasets, and the random dense instances and the point samples drawn from the ModelNet10 meshes are generated from fixed seeds given in the README of the release. The per-instance results are published with the release, together with the commands that redraw each figure from them.

\bibliographystyle{spmpsci}
\bibliography{ref}

\begin{thebibliography}{10}
\providecommand{\url}[1]{{#1}}
\providecommand{\urlprefix}{URL }
\expandafter\ifx\csname urlstyle\endcsname\relax
  \providecommand{\doi}[1]{DOI~\discretionary{}{}{}#1}\else
  \providecommand{\doi}{DOI~\discretionary{}{}{}\begingroup
  \urlstyle{rm}\Url}\fi

\bibitem{alaya2019screening}
Alaya, M.Z., B{\'e}rar, M., Gasso, G., Rakotomamonjy, A.: Screening {S}inkhorn
  algorithm for regularized optimal transport.
\newblock In: Advances in Neural Information Processing Systems, vol.~32 (2019)

\bibitem{altschuler2017near}
Altschuler, J., Weed, J., Rigollet, P.: Near-linear time approximation
  algorithms for optimal transport via {S}inkhorn iteration.
\newblock In: Advances in Neural Information Processing Systems, vol.~30, pp.
  1964--1974 (2017)

\bibitem{applegate2021pdlp}
Applegate, D., D{\'i}az, M., Hinder, O., Lu, H., Lubin, M., O'Donoghue, B.,
  Schudy, W.: Practical large-scale linear programming using primal-dual hybrid
  gradient.
\newblock In: Advances in Neural Information Processing Systems, vol.~34, pp.
  20243--20257 (2021)

\bibitem{auricchio2018computing}
Auricchio, G., Bassetti, F., Gualandi, S., Veneroni, M.: Computing
  {K}antorovich--{W}asserstein distances on d-dimensional histograms using
  (d+1)-partite graphs.
\newblock In: Advances in Neural Information Processing Systems, vol.~31 (2018)

\bibitem{bargetto2026improved}
Bargetto, R., Della~Croce, F., Scatamacchia, R.: An improved variant of the
  {Iterated Inside Out} algorithm for solving the optimal transport {DOTmark}
  instances.
\newblock Mathematical Programming Computation  (2026).
\newblock \doi{10.1007/s12532-026-00341-2}

\bibitem{bargetto2026iio}
Bargetto, R., Della~Croce, F., Scatamacchia, R.: {Iterated Inside Out}: a new
  exact algorithm for the transportation problem.
\newblock INFORMS Journal on Computing \textbf{38}(1), 39--52 (2026).
\newblock \doi{10.1287/ijoc.2024.0642}

\bibitem{bertsekas1982constrained}
Bertsekas, D.P.: Constrained Optimization and Lagrange Multiplier Methods.
\newblock Academic Press, New York (1982)

\bibitem{bertsekas1988auction}
Bertsekas, D.P.: The auction algorithm: a distributed relaxation method for the
  assignment problem.
\newblock Annals of Operations Research \textbf{14}(1), 105--123 (1988).
\newblock \doi{10.1007/BF02186476}

\bibitem{blanchard2021lse}
Blanchard, P., Higham, D.J., Higham, N.J.: Accurately computing the log-sum-exp
  and softmax functions.
\newblock IMA Journal of Numerical Analysis \textbf{41}(4), 2311--2330 (2021).
\newblock \doi{10.1093/imanum/draa038}

\bibitem{bobrutsky2025quantization}
Bobrutsky, J., Moscovich, A.: Quantization-based bounds on the {Wasserstein}
  metric.
\newblock arXiv preprint arXiv:2506.00976  (2025)

\bibitem{bonneel2011displacement}
Bonneel, N., van~de Panne, M., Paris, S., Heidrich, W.: Displacement
  interpolation using {L}agrangian mass transport.
\newblock ACM Transactions on Graphics \textbf{30}(6), 158:1--158:12 (2011).
\newblock \doi{10.1145/2070781.2024192}

\bibitem{burns2025lamp}
Burns, M.X., Liang, J.: Log-averaged mirror prox for fast, large-scale optimal
  transport in linear space.
\newblock arXiv preprint arXiv:2511.11359  (2025)

\bibitem{chen2022m3s}
Chen, Y., Li, C., Lu, Z.: Computing {Wasserstein}-$p$ distance between images
  with linear cost.
\newblock In: Proceedings of the IEEE/CVF Conference on Computer Vision and
  Pattern Recognition, pp. 519--528 (2022)

\bibitem{choquette2023hopper}
Choquette, J.: {NVIDIA Hopper H100 GPU}: Scaling performance.
\newblock IEEE Micro \textbf{43}(3), 9--17 (2023).
\newblock \doi{10.1109/MM.2023.3256796}

\bibitem{choquette2021a100}
Choquette, J., Gandhi, W., Giroux, O., Stam, N., Krashinsky, R.: {NVIDIA A100}
  tensor core {GPU}: Performance and innovation.
\newblock IEEE Micro \textbf{41}(2), 29--35 (2021).
\newblock \doi{10.1109/MM.2021.3061394}

\bibitem{courty2017optimal}
Courty, N., Flamary, R., Tuia, D., Rakotomamonjy, A.: Optimal transport for
  domain adaptation.
\newblock IEEE Transactions on Pattern Analysis and Machine Intelligence
  \textbf{39}(9), 1853--1865 (2017).
\newblock \doi{10.1109/TPAMI.2016.2615921}

\bibitem{cuturi2013sinkhorn}
Cuturi, M.: Sinkhorn distances: Lightspeed computation of optimal transport.
\newblock In: Advances in Neural Information Processing Systems, vol.~26 (2013)

\bibitem{cuturi2022ott}
Cuturi, M., Meng-Papaxanthos, L., Tian, Y., Bunne, C., Davis, G., Teboul, O.:
  Optimal transport tools ({OTT}): a {JAX} toolbox for all things
  {W}asserstein.
\newblock arXiv preprint arXiv:2201.12324 (2022)

\bibitem{dellamico2000assignment}
Dell'Amico, M., Toth, P.: Algorithms and codes for dense assignment problems:
  the state of the art.
\newblock Discrete Applied Mathematics \textbf{100}(1-2), 17--48 (2000).
\newblock \doi{10.1016/S0166-218X(99)00172-9}

\bibitem{deza2026shardlp}
Deza, A., Dey, S., Van~Hentenryck, P.: Distributed linear programming on {GPU}
  clusters at extreme scale.
\newblock arXiv preprint arXiv:2609.09108  (2026)

\bibitem{flamary2021pot}
Flamary, R., Courty, N., Gramfort, A., et~al.: {POT}: {Python} optimal
  transport.
\newblock Journal of Machine Learning Research \textbf{22}(78), 1--8 (2021)

\bibitem{hou2026badmm}
Hou, D., Toh, K.C.: A logarithmic-dual-update {Bregman} {ADMM} for optimal
  transport.
\newblock arXiv preprint arXiv:2610.08087  (2026)

\bibitem{ieee2019fp}
{IEEE}: {IEEE} standard for floating-point arithmetic.
\newblock IEEE Std 754-2019 (Revision of IEEE Std 754-2008) (2019).
\newblock \doi{10.1109/IEEESTD.2019.8766229}

\bibitem{kuhn1955hungarian}
Kuhn, H.W.: The {H}ungarian method for the assignment problem.
\newblock Naval Research Logistics Quarterly \textbf{2}(1--2), 83--97 (1955).
\newblock \doi{10.1002/nav.3800020109}

\bibitem{li2026d}
Li, H., Huang, Y., Liu, H., Ge, D., Ye, Y.: {D-PDLP}: Scaling {PDLP} to
  distributed multi-{GPU} systems.
\newblock arXiv preprint arXiv:2601.07628  (2026)

\bibitem{ling2007efficient}
Ling, H., Okada, K.: An efficient earth mover's distance algorithm for robust
  histogram comparison.
\newblock IEEE Transactions on Pattern Analysis and Machine Intelligence
  \textbf{29}(5), 840--853 (2007).
\newblock \doi{10.1109/TPAMI.2007.1058}

\bibitem{lu2025cupdlpx}
Lu, H., Peng, Z., Yang, J.: {cuPDLPx}: A further enhanced {GPU}-based
  first-order solver for linear programming.
\newblock arXiv preprint arXiv:2507.14051  (2025)

\bibitem{lu2024pdot}
Lu, H., Yang, J.: {PDOT}: A practical primal-dual algorithm and a {GPU}-based
  solver for optimal transport.
\newblock arXiv preprint arXiv:2407.19689  (2024)

\bibitem{bdrs-can-fail}
Ma, S.: {BDRS} can fail for matrix scaling and optimal transport.
\newblock Optimization Online (2026)

\bibitem{scaling-convergence}
Ma, S.: Convergence of {BDRS} as a matrix scaling algorithm.
\newblock Optimization Online (2026)

\bibitem{ma2025bregman}
Ma, S., Xiao, L., Zhao, R.: {Bregman} {Douglas}--{Rachford} splitting method.
\newblock arXiv preprint arXiv:2509.08739  (2025)

\bibitem{mai2021drot}
Mai, V.V., Lindb{\"a}ck, J., Johansson, M.: A fast and accurate splitting
  method for optimal transport: Analysis and implementation.
\newblock In: International Conference on Learning Representations (ICLR)
  (2022)

\bibitem{milakov2018online}
Milakov, M., Gimelshein, N.: Online normalizer calculation for softmax.
\newblock arXiv preprint arXiv:1805.02867  (2018)

\bibitem{munkres1957algorithms}
Munkres, J.: Algorithms for the assignment and transportation problems.
\newblock Journal of the Society for Industrial and Applied Mathematics
  \textbf{5}(1), 32--38 (1957).
\newblock \doi{10.1137/0105003}

\bibitem{nvidia2021a100datasheet}
{NVIDIA Corporation}: {NVIDIA A100} tensor core {GPU} datasheet.
\newblock
  \url{https://www.nvidia.com/content/dam/en-zz/Solutions/Data-Center/a100/pdf/nvidia-a100-datasheet-us-nvidia-1758950-r4-web.pdf}
  (2021).
\newblock Accessed 2026-09-21

\bibitem{orlin1997network}
Orlin, J.B.: A polynomial time primal network simplex algorithm for minimum
  cost flows.
\newblock Mathematical Programming \textbf{78}(2), 109--129 (1997).
\newblock \doi{10.1007/BF02614365}

\bibitem{paszke2019pytorch}
Paszke, A., Gross, S., Massa, F., Lerer, A., Bradbury, J., Chanan, G., Killeen,
  T., Lin, Z., Gimelshein, N., Antiga, L., Desmaison, A., K{\"o}pf, A., Yang,
  E., DeVito, Z., Raison, M., Tejani, A., Chilamkurthy, S., Steiner, B., Fang,
  L., Bai, J., Chintala, S.: {PyTorch}: An imperative style, high-performance
  deep learning library.
\newblock In: Advances in Neural Information Processing Systems 32 (2019)

\bibitem{peyre2019computational}
Peyr{\'e}, G., Cuturi, M.: Computational optimal transport: with applications
  to data science.
\newblock Foundations and Trends in Machine Learning \textbf{11}(5--6),
  355--607 (2019).
\newblock \doi{10.1561/2200000073}

\bibitem{rubner2000earth}
Rubner, Y., Tomasi, C., Guibas, L.J.: The earth mover's distance as a metric
  for image retrieval.
\newblock International Journal of Computer Vision \textbf{40}(2), 99--121
  (2000).
\newblock \doi{10.1023/A:1026543900054}

\bibitem{schmitzer2019stabilized}
Schmitzer, B.: Stabilized sparse scaling algorithms for entropy regularized
  transport problems.
\newblock SIAM Journal on Scientific Computing \textbf{41}(3), A1443--A1481
  (2019).
\newblock \doi{10.1137/16M1106018}

\bibitem{schrieber2016dotmark}
Schrieber, J., Schuhmacher, D., Gottschlich, C.: {DOTmark}--a benchmark for
  discrete optimal transport.
\newblock IEEE Access \textbf{5}, 271--282 (2017).
\newblock \doi{10.1109/ACCESS.2016.2639065}

\bibitem{schwinn2019heuristics}
Schwinn, J., Werner, R.: On the effectiveness of primal and dual heuristics for
  the transportation problem.
\newblock IMA Journal of Management Mathematics \textbf{30}(3), 281--303
  (2019).
\newblock \doi{10.1093/imaman/dpy011}

\bibitem{solomon2015convolutional}
Solomon, J., de~Goes, F., Peyr{\'e}, G., Cuturi, M., Butscher, A., Nguyen, A.,
  Du, T., Guibas, L.: Convolutional {W}asserstein distances: efficient optimal
  transportation on geometric domains.
\newblock ACM Transactions on Graphics \textbf{34}(4), 66:1--66:11 (2015).
\newblock \doi{10.1145/2766963}

\bibitem{tseng1993exponential}
Tseng, P., Bertsekas, D.P.: On the convergence of the exponential multiplier
  method for convex programming.
\newblock Mathematical Programming \textbf{60}(1), 1--19 (1993)

\bibitem{williams2009roofline}
Williams, S., Waterman, A., Patterson, D.: Roofline: an insightful visual
  performance model for multicore architectures.
\newblock Communications of the ACM \textbf{52}(4), 65--76 (2009).
\newblock \doi{10.1145/1498765.1498785}

\bibitem{wu2015shapenets}
Wu, Z., Song, S., Khosla, A., Yu, F., Zhang, L., Tang, X., Xiao, J.: {3D}
  {S}hape{N}ets: a deep representation for volumetric shapes.
\newblock In: IEEE Conference on Computer Vision and Pattern Recognition (CVPR)
  (2015)

\bibitem{xia2026halo}
Xia, W., Zhu, Y.N., Liang, J., Zhang, X.: A memory-efficient hierarchical
  algorithm for large-scale optimal transport problems.
\newblock In: The Fourteenth International Conference on Learning
  Representations (2026)

\bibitem{zhang2025hot}
Zhang, G., Gu, Z., Yuan, Y., Sun, D.: {HOT}: An efficient {Halpern}
  accelerating algorithm for optimal transport problems.
\newblock IEEE Transactions on Pattern Analysis and Machine Intelligence
  \textbf{47}(8), 6703--6714 (2025).
\newblock \doi{10.1109/TPAMI.2025.3564353}

\end{thebibliography}

\newpage
\appendix

\newpage
\section{Recovering primal--dual pairs from the baseline solvers}\label{app:duals}

The cross-solver comparison of Section~\ref{sec:experiments} scores every solver's returned solution by the KKT residual~\eqref{eq:kkt} on its \emph{corresponding} primal--dual triple $(X, \alpha, \beta)$ for the \emph{$C$-normalized} version of~\eqref{eq:primal-problem}. Each baseline solver terminates based on its own stopping rule, and for some baseline solvers, the mapping from their stopping rule to the KKT residual is not clear. For the various solvers, their final transport plan $X$ is recovered from:
\begin{enumerate}
    \item the array it returns (HALO, PDOT, cuPDLPx, log-domain Sinkhorn),
    \item the plan buffer of its final iterate (DROT, M3S), and
    \item the plan recovered from its final algorithm state (HOT, through HOT's own recovery rule, LAMP and Grid Sinkhorn).
\end{enumerate}
This appendix therefore records, for each baseline, only how the dual pair $(\alpha, \beta)$ is obtained from the solver's final state. For entropic solvers, we follow the convention of~\cite{lu2024pdot} and score the entropic potentials as the dual potentials of~\eqref{eq:primal-problem} directly, except for M3S, whose recovery we describe below.

\paragraph{DROT.} DROT~\cite{mai2021drot} applies Euclidean Douglas--Rachford splitting to~\eqref{eq:primal-problem}. Alongside the plan it maintains two vectors $\phi_1^k \in \R^m$ and $\phi_2^k \in \R^n$, which enter its iterations~\cite[Algorithm~1]{mai2021drot} as
\begin{equation*}
X^{k+1} = \bigl(X^k + \phi_1^k \oplus \phi_2^k - \sigma C\bigr)_+
\end{equation*}
with step size $\sigma > 0$. The authors of~\cite{mai2021drot} show that this pair implicitly plays the role of the dual variables of~\eqref{eq:dual-problem}: $(\phi_1^k/\sigma) \oplus (\phi_2^k/\sigma)$ converges to $\alpha^\star \oplus \beta^\star$, and the method's own stopping criteria are the primal residual, the dual residual and the duality gap at $(\phi_1^k/\sigma, \phi_2^k/\sigma)$. The provided implementation departs from this description: it stops once $\sqrt{\|X\mathbf{1}_n - r\|_2^2 + \|X^\top\mathbf{1}_m - c\|_2^2}$ falls below a given tolerance, and forms neither the dual residual nor the gap. We follow the implementation, with the tolerance $10^{-4}$ tested in single precision every $100$ iterations, so DROT's dual residual is computed only after its solve. Our recovery reads the two vectors at DROT's native stop and rescales,
\[
  (\alpha, \beta) = (\phi_1/\sigma, \phi_2/\sigma),
\]
with $\sigma = 2/(m+n)$ the step size that the implementation's example program passes.

\paragraph{HOT.} HOT~\cite{zhang2025hot} does not solve \eqref{eq:primal-problem} directly. On an $h \times w$ grid of pixels with a squared Euclidean ground cost, it solves the reduced flow LP of~\cite{auricchio2018computing}, in which every unit of mass moves in two parts: first within its row, then within its column. With a pixel's coordinates denoted as $(p, q)$ and the separable cost as $C_{ab} = \varphi_{\mathrm{c}}(|q - q'|) + \varphi_{\mathrm{r}}(|p- p'|)$ for $a = (p,q)$, $b = (p', q')$, the reduced LP has nonnegative variables $x^1_{p;q \to q'}$ (a horizontal move within row $p$, cost $\varphi_{\mathrm{c}}(|q - q'|)$) and $x^2_{q';p \to p'}$ (a vertical move within column $q'$, cost $\varphi_{\mathrm{r}}(|p - p'|)$), and three sets of constraints:
\begin{alignat}{2}
&\sum_{q'} x^1_{p;q\to q'} = r_{(p,q)}
&\qquad& \text{(consistency at source, one per pixel } (p,q)\text{)}, \label{eq:hot-src}\\
&\sum_{q} x^1_{p;q\to q'} = \sum_{p'} x^2_{q';p\to p'}
&& \text{(incoming} = \text{outgoing mass, one per node } (p,q')\text{)}, \label{eq:hot-cpl}\\
&\sum_{p} x^2_{q';p\to p'} = c_{(p',q')}
&& \text{(consistency at target, one per pixel } (p',q')\text{)}. \label{eq:hot-tgt}
\end{alignat}
This reduces the variable count from $h^2w^2$ to $hw(h+w)$. Let $(\lambda, \bar\alpha, \bar\beta)$ denote the duals of~\eqref{eq:hot-cpl},~\eqref{eq:hot-src}, and~\eqref{eq:hot-tgt}, respectively. The dual constraints of the reduced LP read
\begin{equation}\label{eq:hot-dual}
\bar\alpha_{(p,q)} + \lambda_{(p,q')} \le \varphi_{\mathrm{c}}(|q - q'|),
\qquad\qquad
\bar\beta_{(p',q')} - \lambda_{(p,q')} \le \varphi_{\mathrm{r}}(|p - p'|).
\end{equation}

\begin{prop}\label{prop:hot-lift}
If $(\lambda, \bar\alpha, \bar\beta)$ is feasible for the reduced dual, then $(\alpha, \beta) := (\bar\alpha, \bar\beta)$ is feasible for~\eqref{eq:dual-problem}, with the same dual objective value. Further, an optimal reduced dual $(\lambda^\star,\bar\alpha^\star, \bar\beta^\star)$ maps to an optimal $(\alpha^\star, \beta^\star)$ for~\eqref{eq:dual-problem}.
\end{prop}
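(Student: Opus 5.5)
The plan has three steps: derive dual feasibility by adding the two reduced constraints through the intermediate node, note that the objectives agree by construction, and get optimality from strong duality for both LPs.

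\textbf{Step 1 (feasibility).} Fix a source pixel $a=(p,q)$ and a target pixel $b=(p',q')$. The intermediate node $(p,q')$ shares the row of $a$ and the column of $b$. Apply the first inequality of~\eqref{eq:hot-dual} at $(p,q)\to(p,q')$ and the second at $(p,q')\to(p',q')$, then add them. The multiplier $\lambda_{(p,q')}$ appears with opposite signs and cancels, giving
\[
\bar\alpha_{(p,q)}+\bar\beta_{(p',q')}\le \varphi_{\mathrm{c}}(|q-q'|)+\varphi_{\mathrm{r}}(|p-p'|)=C_{ab}.
\]
This is exactly the constraint of~\eqref{eq:dual-problem} for the pair $(a,b)$, and $(a,b)$ was arbitrary.

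\textbf{Step 2 (objective).} The reduced dual objective is $\langle\bar\alpha,r\rangle+\langle\bar\beta,c\rangle$. The coupling constraints~\eqref{eq:hot-cpl} have zero right-hand side, so $\lambda$ does not appear in the objective. This expression is literally the objective of~\eqref{eq:dual-problem} at $(\alpha,\beta)=(\bar\alpha,\bar\beta)$.

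\textbf{Step 3 (optimality).} I will show the two LPs have the same optimal value. In one direction, every feasible $X$ of~\eqref{eq:primal-problem} yields a feasible reduced flow of the same cost. Route each $X_{ab}$ along row $p$ from $q$ to $q'$, then along column $q'$ from $p$ to $p'$, setting
\[
x^1_{p;q\to q'}=\sum_{p'}X_{(p,q),(p',q')},\qquad x^2_{q';p\to p'}=\sum_{q}X_{(p,q),(p',q')}.
\]
Checking~\eqref{eq:hot-src}--\eqref{eq:hot-tgt} is routine summation, and the cost matches because $C$ is separable. Hence the reduced optimum is at most the optimum of~\eqref{eq:primal-problem}.

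In the other direction, LP strong duality holds for both problems: they are feasible and bounded because costs are nonnegative. Steps 1 and 2 show that the reduced dual optimum is at most the optimum of~\eqref{eq:dual-problem}, which equals the optimum of~\eqref{eq:primal-problem}. Combining the two directions, both optima coincide. The reduced optimal dual $(\lambda^\star,\bar\alpha^\star,\bar\beta^\star)$ then maps to a feasible $(\alpha^\star,\beta^\star)$ attaining the optimal value of~\eqref{eq:dual-problem}, so it is optimal.

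\textbf{Main obstacle.} The delicate step is this value equivalence, specifically the direction reduced value $\ge$ value of~\eqref{eq:primal-problem}. I avoid a flow decomposition of reduced flows back into a plan by routing the comparison through the duals: the lift of Step 1 already gives reduced dual value $\le$ the value of~\eqref{eq:dual-problem}. I also need the index conventions to match exactly, in particular that the intermediate node $(p,q')$ is in the row of the source and the column of the target, so that $\lambda$ cancels.
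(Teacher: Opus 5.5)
Steps 1 and 2 match the paper's argument, but Step 3 has a genuine gap: your two ``directions'' prove the same inequality. Write $v_{\mathrm{R}}$ for the optimal value of the reduced LP and $p^\star$ for that of \eqref{eq:primal-problem}.

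\begin{itemize}
\item Routing each $X_{ab}$ through $(p,q')$ shows that the reduced LP is a relaxation of \eqref{eq:primal-problem}, i.e.\ $v_{\mathrm{R}} \le p^\star$.
\item Your second argument, the lift of Steps~1--2, shows that the reduced dual optimum is at most the optimum of \eqref{eq:dual-problem}. After strong duality on both LPs this reads $v_{\mathrm{R}} \le p^\star$ again.
\end{itemize}

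The optimality claim needs the opposite inequality. The lifted pair is feasible for \eqref{eq:dual-problem} with objective $v_{\mathrm{R}}$, so it is optimal only if $v_{\mathrm{R}} \ge p^\star$. Your ``main obstacle'' paragraph names exactly this direction, but the lift you invoke for it points the other way. Nothing in the proposal rules out the reduced LP being a strict relaxation.

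The paper closes this step by citing the equality of optimal values from \cite[Thm.~1 and Cor.~1]{auricchio2018computing}. To stay self-contained in your dual style, lift in the reverse direction. For any feasible $(\alpha,\beta)$ of \eqref{eq:dual-problem}, set $\lambda_{(p,q')} := \min_{q}\bigl(\varphi_{\mathrm{c}}(|q-q'|) - \alpha_{(p,q)}\bigr)$.

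\begin{itemize}
\item The first inequality of \eqref{eq:hot-dual} holds by construction.
\item The second holds because $\beta_{(p',q')} - \lambda_{(p,q')} = \max_{q}\bigl(\alpha_{(p,q)} + \beta_{(p',q')} - \varphi_{\mathrm{c}}(|q-q'|)\bigr) \le \varphi_{\mathrm{r}}(|p-p'|)$, by the constraint of \eqref{eq:dual-problem}.
\end{itemize}

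Since $\lambda$ does not enter the objective, the reduced dual optimum is at least the optimum of \eqref{eq:dual-problem}. Together with Steps~1--2 this gives equality, and hence optimality of the lifted pair.

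Alternatively, the primal route you wanted to avoid is also short. Glue at each intermediate node: $X_{(p,q),(p',q')} := x^1_{p;q\to q'}\,x^2_{q';p\to p'}/m_{(p,q')}$, where $m_{(p,q')}$ is the common value of the two sides of \eqref{eq:hot-cpl}, and $X := 0$ where $m_{(p,q')}$ vanishes. This is a feasible plan with exactly the reduced cost, so $p^\star \le v_{\mathrm{R}}$.
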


\begin{proof}
Fix a source $a = (p, q)$ and a target $b = (p', q')$. Since we always transport mass horizontally then vertically, every route from $a$ to $b$ must pass through the node $(p, q')$. Adding the two constraints of \eqref{eq:hot-dual} along this route,
\[
\bar \alpha_a + \bar \beta_b \le \varphi_{\mathrm{c}}(|q - q'|) + \varphi_{\mathrm{r}}(|p - p'|) = C_{ab}.
\]
Every constraint of the original dual is thus the sum of exactly two reduced constraints. As a result, feasible reduced duals always yield feasible dual variables of~\eqref{eq:dual-problem}. Since the coupling block~\eqref{eq:hot-cpl} has a zero right-hand side, the reduced dual objective equals $r^\top\bar\alpha + c^\top\bar\beta$, which is the objective of~\eqref{eq:dual-problem} at $(\alpha, \beta)$. Lastly, since the reduced LP and~\eqref{eq:primal-problem} have equal optimal values~\cite[Thm.~1 and Cor.~1]{auricchio2018computing}, strong duality on both the reduced LP and~\eqref{eq:dual-problem} makes $(\alpha, \beta)$ optimal for~\eqref{eq:dual-problem} whenever the reduced dual is optimal for the reduced LP.
\end{proof}

With $c_{\mathrm{LP}}$ and $b$ denoting the reduced LP's cost vector and right-hand side, HOT's preconditioning procedure internally rescales them by $1/c_s$ and $1/b_s$, respectively, with $c_s = \|c_{\mathrm{LP}}\|_2 + 1$ and $b_s = \|b\|_2 + 1$. A right-hand-side rescaling leaves the dual solution unchanged while a cost rescaling scales it linearly, so the recovery mapping multiplies HOT's returned dual vector by $c_s$ alone. Composing this with Proposition~\ref{prop:hot-lift} gives the mapping used in our code, $(\lambda, \bar \alpha, \bar \beta) \mapsto (c_s \bar \alpha, c_s \bar \beta)$.

\paragraph{HALO.} HALO~\cite{xia2026halo} is a coarse-to-fine restricted-support method. Both marginal distributions $r$ and $c$ are coarsened through a procedure that merges pairs of supports next to each other. At each level of coarseness, the method solves the LP restricted to a sparse candidate active set~\cite[Def.~1]{xia2026halo}, with the marginal constraints of that level's coarsened supports. cuPDLPx~\cite{lu2025cupdlpx} is used as the inner LP solver. Then, HALO prices the excluded entries with the resulting dual pair $(u, v) \in \R^m \times \R^n$, appending those of most negative reduced cost $C_{ij} - u_i - v_j$ and re-solving. At the finest level, the restricted LP retains the original marginal constraints, so its dual variables provide potentials on the original supports. Therefore, at HALO's native stop, our recovery map reads the finest-level dual variables, reversing any additional scaling that HALO performs internally. In fact, HALO applies scaling only in its Grid mode, where $C_{ij}^{\text{HALO, Grid}} = ((p-p')^2 + (q-q')^2)/n_g^2$. There is no scaling in its Point Cloud (Tree) mode. Therefore, our recovery map from HALO's Grid mode is $(u^{\text{Grid}}, v^{\text{Grid}}) \mapsto (n_g^2 u^{\text{Grid}} / (2(n_g-1)^2),n_g^2 v^{\text{Grid}}/ (2(n_g-1)^2))$, while the map for its Point Cloud (Tree) mode is $(u^{\text{Tree}}, v^{\text{Tree}}) \mapsto (u^{\text{Tree}}, v^{\text{Tree}})$.

\paragraph{LAMP.} LAMP~\cite{burns2025lamp} runs primal--dual mirror prox on~\eqref{eq:primal-problem} in a log-averaged form that keeps only dual variables, and certifies its own termination with a vector $\theta \in \R^n$. We run its dense path on the stored $C$ (General) and its matrix-free path on point coordinates (Point Cloud), the latter on coordinates divided by $C_{\max}$ (by $\sqrt{C_{\max}}$ for $\ell_2^2$), so that both paths solve the normalized cost. We score the pair its stopping test uses: $\beta := -2W_\infty \theta$, where $W_\infty := \max_{ij} C_{ij}$ equals $1$ on our normalized instances, and $\alpha$ the c-transform of $\beta$, $\alpha_i := \min_j(C_{ij} - \beta_j)$, which is dual feasible by construction. Note that LAMP runs on the marginals of its upstream preprocessing, $\tilde r := (r + 10^{-6}\mathbf{1}_m)/\|r + 10^{-6}\mathbf{1}_m\|_1$ and $\tilde c$ likewise (Section~\ref{sec:experiment-setup}), so its KKT residual is bounded below, up to LAMP's own residual against $(\tilde r, \tilde c)$, by the displacement of $(\tilde r, \tilde c)$ from $(r, c)$.

\paragraph{PDOT and cuPDLPx.} Both PDOT~\cite{lu2024pdot} and cuPDLPx~\cite{lu2025cupdlpx} return the dual of the original problem that is passed to them. Since we normalize $C$ before passing it to them, we can take the dual variables returned by both PDOT and cuPDLPx as-is. When cuPDLPx's presolve removes the constraint of a zero-mass row or column, its postsolve assigns that dual entry a value of its own, which enters $r_D$.

\paragraph{Log-domain Sinkhorn, Grid Sinkhorn, and M3S.}
Log-domain Sinkhorn~\cite{schmitzer2019stabilized}, as implemented in POT~\cite{flamary2021pot}, maintains the log-scalings $(u, v)$ of the plan $X = \exp\bigl(-C/\varepsilon + u \oplus v\bigr)$, where $\varepsilon$ is the regularization parameter. It iterates as follows:
\begin{equation}\label{eq:sinkhorn-halfsteps}
u_i \gets \log r_i - \LSE_j\bigl(-C_{ij}/\varepsilon + v_j\bigr),
\qquad
v_j \gets \log c_j - \LSE_i\bigl(-C_{ij}/\varepsilon + u_i\bigr).
\end{equation}
Setting $(\alpha, \beta) := (\varepsilon u, \varepsilon v)$ gives
\begin{equation}\label{eq:entropic-identity}
\alpha_i + \beta_j - C_{ij} = \varepsilon \log X_{ij}.
\end{equation}
Let us denote $H(X) := -\sum_{ij} X_{ij}\log X_{ij}$. Throughout this paragraph, $r_P$, $r_D$ and $g$ are the primal residual, the dual residual and the duality gap of the \emph{original} problem~\eqref{eq:primal-problem}. At a point of zero mass, each of the three recoveries of this paragraph gives the potential $-\infty$, which we keep: it enters $r_D$ as $(-\infty)_+ = 0$ and the dual objective through the convention of Section~\ref{sec:prelim}. Firstly, after any row (resp. column) update of~\eqref{eq:sinkhorn-halfsteps}, the corresponding marginal sum agrees exactly with $r$ (resp. $c$). Since the entries are nonnegative and $r$ and $c$ are probability vectors, each entry is bounded by its own marginal, $X_{ij} \le r_i \le 1$ (resp. $X_{ij} \le c_j \le 1$). Hence $\log X_{ij} \le 0$, and~\eqref{eq:entropic-identity} gives $\alpha \oplus \beta \le C$ entrywise. This means that the recovered dual variables $(\alpha, \beta)$ are always feasible for~\eqref{eq:dual-problem}.  Secondly, at a primal-feasible iterate, where $r = X\mathbf{1}_n$ and $c = X^\top\mathbf{1}_m$,~\eqref{eq:entropic-identity} gives
\[
r^\top\alpha + c^\top\beta = \textstyle\sum_{ij} X_{ij}(\alpha_i + \beta_j)
 = \langle C, X\rangle + \varepsilon \textstyle\sum_{ij} X_{ij}\log X_{ij},
\]
so that the duality gap is
\[
g = \langle C, X\rangle - r^\top\alpha - c^\top\beta = -\varepsilon \textstyle\sum_{ij} X_{ij}\log X_{ij} = \varepsilon H(X).
\]
Note that this gap is a property of the recovered primal--dual pair and need not reflect the objective error of the plan $X$ in~\eqref{eq:primal-problem}.

Grid Sinkhorn~\cite{solomon2015convolutional}, as implemented in OTT~\cite{cuturi2022ott}, runs the same iteration, which we run in OTT's scaling (kernel) mode, since in OTT 0.6.0 its default log-domain mode returns wrong values on the instances with a point of zero mass, $20$ to $33$ percent of our DOTmark instances at each size. OTT's routine for the marginals of the plan is wrong on the same instances in either mode, so we compute $X\mathbf{1}_n$ and $X^\top\mathbf{1}_m$ for $r_P$ from $(f, g)$ by two one-dimensional $\LSE$ passes over the separable cost, as for $\langle C, X\rangle$. Its potentials $(f, g)$ are already in cost units, so $(\alpha, \beta) := (f, g)$, and~\eqref{eq:entropic-identity} and both consequences apply verbatim. We measure $r_D$ of $(f, g)$ over all $N^2$ entries for $n_g \le 128$, where it is exactly zero on every solve for the instances reported in the figure. For larger $n_g$ we do not measure it, and record $r_D = 0$ as inferred from the first consequence, which holds in exact arithmetic.

M3S~\cite{chen2022m3s} is a multiscale $\varepsilon$-scaling Sinkhorn. Unlike the two Sinkhorn solvers above, it carries the floored marginals $(\tilde r, \tilde c)$ of Section~\ref{sec:prelim} in the exponential,
\begin{equation}\label{eq:m3s-plan}
X_{ij} = \tilde r_i \tilde c_j \exp\bigl((\hat\alpha_i + \hat\beta_j - C_{ij})/\varepsilon\bigr),
\end{equation}
with $(\hat\alpha, \hat\beta)$ being its finest-level potentials and $\varepsilon$ the last value of its $\varepsilon$ ladder, whose hard-wired target is $10^{-1}$ in squared-pixel units. Taking logs in~\eqref{eq:m3s-plan} and multiplying both sides by $\varepsilon$,
\[
\varepsilon\log X_{ij} = \bigl(\hat\alpha_i + \varepsilon\log \tilde r_i\bigr) + \bigl(\hat\beta_j + \varepsilon\log \tilde c_j\bigr) - C_{ij},
\]
so the plan's own pair $(\hat\alpha + \varepsilon\log\tilde r, \hat\beta + \varepsilon\log\tilde c)$ satisfies~\eqref{eq:entropic-identity}. Our recovery map instead uses the original marginals, $(\hat \alpha, \hat \beta) \mapsto (\hat\alpha + \varepsilon\log r, \hat\beta + \varepsilon\log c)$, which shifts that pair by $\varepsilon\log(r/\tilde r)$ and $\varepsilon\log(c/\tilde c)$, so~\eqref{eq:entropic-identity} holds for it only up to these terms. Accordingly, at a plan that meets the original marginals, the gap of the recovered pair is $\varepsilon H(X) - \varepsilon D_{\mathrm{KL}}(r\|\tilde r) - \varepsilon D_{\mathrm{KL}}(c\|\tilde c)$ rather than $\varepsilon H(X)$, where $D_{\mathrm{KL}}(r\|\tilde r) := \sum_i r_i\log(r_i/\tilde r_i)$. Moreover, both potentials are then rescaled from M3S's native squared-pixel cost units by $1/(2(n_g-1)^2)$, the same factor as its objective. Therefore, the dual recovery mapping we implement is
\[
(\hat \alpha, \hat \beta) \mapsto \Bigl(\frac{\hat\alpha + \varepsilon\log r}{2(n_g-1)^2}, \frac{\hat\beta + \varepsilon\log c}{2(n_g-1)^2}\Bigr).
\]
Lastly, M3S truncates its plan. More specifically, outside of a sparse support set, the plan is set to zero by construction rather than driven to zero asymptotically, so the identity constrains $(\alpha, \beta)$ only on that support and says nothing about the remaining entries. Dual feasibility there is therefore not automatic, and our experiments measure and report $r_D$ as part of the KKT residual~\eqref{eq:kkt} on the delivered pair like any other.

\section{Outcomes by solver and resolution}\label{app:outcomes}

Table~\ref{tab:outcomes} complements the co-solved aggregates of Section~\ref{sec:exp-baselines}: for every solver and DOTmark resolution, it gives the share of attempted solves that are scored and the share whose KKT residual is at most $10^{-4}$.

\begin{table}[t]
\caption{Outcomes on DOTmark by solver and resolution. In each cell, the left number is the percentage of attempted solves that are scored (a finite KKT residual within the limits of the protocol) and the right number is the percentage whose KKT residual is at most $10^{-4}$, over every cost the solver runs. Our solvers are run at $\mathrm{tol} = 10^{-4}$. A size the solver refuses before launch counts as attempted and not scored; --- marks a size that was not run.}
\label{tab:outcomes}
\centering
\footnotesize
\setlength{\tabcolsep}{3pt}
\begin{tabular}{lcccccc}
\hline\noalign{\smallskip}
Solver & \multicolumn{6}{c}{$n_g$} \\
 & 32 & 64 & 128 & 256 & 512 & 1024 \\
\noalign{\smallskip}\hline\noalign{\smallskip}
\multicolumn{7}{l}{\emph{Grid format}} \\
\quad Grid BDRS (ours) & 100/100 & 100/100 & 100/100 & 100/100 & 100/100 & 100/100 \\
\quad HOT & 100/100 & 100/96 & 100/$<$1 & 100/0 & 100/0 & 0/0 \\
\quad Conv.\ Sinkhorn, $\varepsilon = 10^{-2}$ & 100/0 & 100/0 & 100/0 & 100/0 & 100/0 & 100/0 \\
\quad Conv.\ Sinkhorn, $\varepsilon = 10^{-3}$ & 100/0 & 100/0 & 100/0 & 100/0 & 100/0 & 100/0 \\
\quad Conv.\ Sinkhorn, $\varepsilon = 10^{-4}$ & 58/0 & 38/0 & 40/0 & 37/0 & 39/0 & 39/$<$1 \\
\noalign{\smallskip}
\multicolumn{7}{l}{\emph{Point Cloud format}} \\
\quad Point Cloud BDRS (ours) & 100/100 & 100/100 & 100/100 & $>$99/$>$99 & --- & --- \\
\quad HALO & 100/$>$99 & 100/58 & 100/5 & 100/1 & 100/$<$1 & 0/0 \\
\quad M3S & 100/0 & 100/0 & 98/69 & 82/63 & 42/40 & 0/0 \\
\quad LAMP & 100/97 & 100/59 & 100/23 & 100/0 & --- & --- \\
\noalign{\smallskip}
\multicolumn{7}{l}{\emph{General format}} \\
\quad General BDRS (ours) & 100/100 & 100/100 & 100/100 & 100/100 & --- & --- \\
\quad DROT & 100/0 & 100/0 & 100/0 & 0/0 & --- & --- \\
\quad PDOT & 100/$<$1 & 98/$<$1 & $>$99/0 & 0/0 & --- & --- \\
\quad cuPDLPx & 100/0 & 100/0 & 0/0 & --- & --- & --- \\
\quad Sinkhorn, $\varepsilon = 10^{-2}$ & 100/0 & 100/0 & 100/0 & 0/0 & --- & --- \\
\quad Sinkhorn, $\varepsilon = 10^{-3}$ & 100/0 & 100/0 & 100/0 & 0/0 & --- & --- \\
\quad Sinkhorn, $\varepsilon = 10^{-4}$ & 100/0 & 100/0 & 100/0 & 0/0 & --- & --- \\
\quad LAMP & 100/80 & 100/36 & 100/$<$1 & 0/0 & --- & --- \\
\noalign{\smallskip}\hline
\end{tabular}
\end{table}

\end{document}